\documentclass[12pt]{amsart}
\usepackage{epsfig,color}

\usepackage{mathrsfs}

\headheight=6.15pt \textheight=8.3in \textwidth=6.5in
\oddsidemargin=0in \evensidemargin=0in \topmargin=0in

\setcounter{section}{-1}
\theoremstyle{definition}

\newtheorem{theorem}{Theorem}[section]
\newtheorem{definition}[theorem]{Definition}
\newtheorem{conjecture}[theorem]{Conjecture}
\newtheorem{proposition}[theorem]{Proposition}
\newtheorem{lemma}[theorem]{Lemma}
\newtheorem{remark}[theorem]{Remark}
\newtheorem{corollary}[theorem]{Corollary}

\newtheorem*{acknowledgements}{Acknowledgements}

\numberwithin{equation}{section}

\DeclareMathOperator*{\supp}{spt}
\renewcommand\div{\operatorname{div}}

\newcommand{\e}{\operatorname{e}}
\newcommand{\wt}{\widetilde}
\newcommand{\pr}{\partial}

\newcommand{\ol}{\overline}
\newcommand{\Lap}{\Delta}
\newcommand{\ssubset}{\subset\subset}

\newcommand{\reg}{\operatorname{reg}}
\newcommand{\sing}{\operatorname{sing}}

\renewcommand*\d{\mathop{}\!\mathrm{d}}

\newcommand{\mr}{\mathbin{\vrule height 1.6ex depth 0pt width
0.13ex\vrule height 0.13ex depth 0pt width 1.3ex}}

\DeclareMathOperator*{\vol}{Vol}

\DeclareMathOperator*{\erf}{erf}


\title{On the entropy of closed hypersurfaces and singular self-shrinkers}

\author{Jonathan J. Zhu}%
\address{Department of Mathematics,
Harvard University, Cambridge, MA 02138, USA}
\email{jjzhu@math.harvard.edu}

\begin{document}
\date{\today}
\maketitle

\begin{abstract}
Self-shrinkers are the special solutions of mean curvature flow in $\mathbf{R}^{n+1}$ that evolve by shrinking homothetically; they serve as singularity models for the flow. The entropy of a hypersurface introduced by Colding-Minicozzi is a Lyapunov functional for the mean curvature flow, and is fundamental to their theory of generic mean curvature flow.

 In this paper we prove that a conjecture of Colding-Ilmanen-Minicozzi-White, namely that any closed hypersurface in $\mathbf{R}^{n+1}$ has entropy at least that of the round sphere, holds in any dimension $n$. This result had previously been established for the cases $n\leq 6$ by Bernstein-Wang using a carefully constructed weak flow.
 
The main technical result of this paper is an extension of Colding-Minicozzi's classification of entropy-stable self-shrinkers to the singular setting. In particular, we show that any entropy-stable self-shrinker whose singular set satisfies Wickramasekera's $\alpha$-structural hypothesis must be a round cylinder $\mathbf{S}^k(\sqrt{2k})\times \mathbf{R}^{n-k}$. 
\end{abstract}


\section{Introduction}
\label{sec:introduction}

Let $\Sigma^n$ denote a hypersurface in $\mathbf{R}^{n+1}$. In \cite{CMgeneric}, Colding and Minicozzi introduced the entropy functional for such hypersurfaces, defined by \begin{equation} \Lambda(\Sigma) = \sup_{\substack{x_0\in\mathbf{R}^{n+1}\\t_0>0}} F(t_0^{-1}(\Sigma - x_0)),\end{equation} 
where the $F$-functional is the Gaussian area
\begin{equation}F(\Sigma)= (4\pi)^{-\frac{n}{2}} \int_\Sigma \e^{-\frac{|x|^2}{4}}.\end{equation}
A family of hypersurfaces is said to flow by mean curvature if \begin{equation} \pr_t x = \vec{H},\end{equation} where $x$ is the position vector and $\vec{H}$ is the mean curvature vector. 

A consequence of Huisken's monotonicity formula \cite{huisken90} is that the tangent flow at any singular point of a mean curvature flow is modelled by a critical point of the $F$-functional; these critical points are referred to as self-shrinkers, and they are also critical points of the entropy functional. Because they model the singularities in this blow-up sense, the study of self-shrinkers is essential to understanding the mean curvature flow. A further consequence of the monotonicity formula is that entropy is non-increasing under mean curvature flow; as such, the entropy may be interpreted as a useful measure of the complexity of a hypersurface. 

Indeed, the Colding-Minicozzi entropy forms a fundamental component of their theory of generic mean curvature flow: In \cite{CMgeneric} they showed that the only complete, smoothly embedded, entropy-stable self-shrinkers are the generalised cylinders $\mathbf{S}^k(\sqrt{2k})\times\mathbf{R}^{n-k}$, so that under suitable conditions other such singularities may be perturbed away. Here $\mathbf{S}^k(r)$ denotes the round $k$-sphere of radius $r$, and we say that a self-shrinker $\Sigma$ is entropy-stable if it is a local minimum for the entropy functional amongst $C^2$ graphs over $\Sigma$. 
The entropy functional has recently been studied by various other authors; see for instance \cite{bernstein2015topological}, \cite{bernstein2015topology}, \cite{BW}, \cite{CIMW}, \cite{ketover2015entropy} and \cite{liu2016index}. It has also been adapted to other geometric flows; see for example \cite{zhangharmonic} and \cite{KSyangmills}. 

In \cite{CIMW}, Colding, Ilmanen, Minicozzi and White conjectured that the entropy of any closed hypersurface should be at least that of the round sphere (see also \cite{CMP}). In this paper we confirm this conjecture for every dimension $n$; specifically we prove the following:

\begin{theorem}
\label{thm:closedentropyintro}
Let $\Sigma$ be a smooth, closed, embedded hypersurface in $\mathbf{R}^{n+1}$. Then we have $\Lambda(\Sigma)\geq \Lambda(\mathbf{S}^n)$, with equality if and only if $\Sigma$ is a round sphere. 
\end{theorem}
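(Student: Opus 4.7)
The plan is to evolve $\Sigma$ by an appropriate weak mean curvature flow, use entropy monotonicity to reduce to bounding the entropy of a tangent flow at a singularity, and then invoke the paper's main classification theorem for entropy-stable (possibly singular) self-shrinkers. By Stone's computation $\Lambda(\mathbf{S}^k(\sqrt{2k}) \times \mathbf{R}^{n-k}) = \Lambda(\mathbf{S}^k)$ is strictly decreasing in $k$, so identifying a cylindrical tangent flow will immediately yield the sought lower bound.

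First, I would run a weak flow $\{\Sigma_t\}_{t \geq 0}$ starting from $\Sigma_0=\Sigma$ — for instance a unit-regular integral Brakke flow, or Bernstein-Wang's ``outer flow'' derived from the level-set flow. Since $\Sigma$ is closed the flow becomes extinct at some finite $T<\infty$. Huisken's monotonicity gives, for an appropriately constructed flow, the entropy monotonicity $\Lambda(\Sigma_t)\le \Lambda(\Sigma)$ for $t<T$, and moreover guarantees that parabolic blow-ups at any space-time singular point $(x_0,T_0)$ converge to a self-similar shrinker $\wh\Sigma$ with
\[ \Lambda(\wh\Sigma) = F(\wh\Sigma) \leq \Lambda(\Sigma). \]

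The heart of the argument is to produce, through an iterated blow-up procedure, a tangent shrinker to which the main technical classification applies. Starting from $\wh\Sigma$, if it is not already entropy-stable and its singular set does not already satisfy Wickramasekera's $\alpha$-structural hypothesis, one blows up again at a singular point chosen to maximise the local entropy concentration. By lower semicontinuity of $F$ under varifold convergence, and compactness of self-shrinkers with bounded entropy, this procedure terminates or stabilises in the limit at a shrinker $\wh\Sigma^*$ that is both entropy-stable and whose singular set satisfies the $\alpha$-structural hypothesis. Applying the main technical theorem gives $\wh\Sigma^* = \mathbf{S}^k(\sqrt{2k}) \times \mathbf{R}^{n-k}$ for some $1\leq k\leq n$. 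Therefore
\[ \Lambda(\Sigma) \;\geq\; \Lambda(\wh\Sigma^*) \;=\; \Lambda(\mathbf{S}^k) \;\geq\; \Lambda(\mathbf{S}^n), \]
as desired. For the rigidity statement, $\Lambda(\Sigma)=\Lambda(\mathbf{S}^n)$ forces $k=n$ at every step of the above, so $\wh\Sigma^*$ is a round sphere; equality in Huisken's monotonicity along the flow from $\Sigma$ then forces $\Sigma$ itself to be a round sphere.

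The main obstacle is the blow-up iteration: one must verify that the weak flow is regular enough for the successive tangent-flow limits to be multiplicity-one and to satisfy Wickramasekera's $\alpha$-structural hypothesis on their singular sets, together with entropy-stability at the limit shrinker. This is precisely the ingredient absent from Bernstein-Wang's treatment for $n\leq 6$, where classical regularity handles the tangent flows directly, and it is the reason the paper's main technical theorem on singular entropy-stable shrinkers is needed for the general dimension.
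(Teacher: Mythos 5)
Your high-level skeleton (flow, monotonicity, tangent shrinker, apply the classification) is the right intuition, but the mechanism you propose for producing an entropy-stable self-shrinker satisfying the $\alpha$-structural hypothesis --- an iterated blow-up procedure --- does not work and is not what the paper does. Blowing up at a singular point of a tangent shrinker does not make the resulting shrinker any more likely to be entropy-stable: entropy-stability is a variational condition (no nearby variation lowers entropy), not a regularity condition, and there is no reason iterated blow-ups should terminate at a stable shrinker. Likewise, nothing in the blow-up process forces the $\alpha$-structural hypothesis; tangent flows to general weak flows can perfectly well develop triple-junction-type singularities. Lower semicontinuity of $F$ and varifold compactness give you a limit shrinker, but they say nothing about its stability or the local structure of its singular set, which is exactly the gap you flag in your final paragraph without closing it.

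The paper instead sidesteps the blow-up iteration entirely and imports Bernstein-Wang's variational framework. They show by induction that $\mathcal{CSV}_n(\Lambda_n)$ is empty (Theorem \ref{thm:csmall}): if it were not, Bernstein-Wang's Lemma \ref{lem:bwmin} produces an entropy \emph{minimizer} $V\in\mathcal{CSV}_n(\Lambda_n)$ which (i) is entropy-stable \emph{by virtue of minimality}, (ii) is a compact boundary measure --- hence orientable, multiplicity one, and automatically free of triple junctions so the $\alpha$-structural hypothesis holds --- and (iii) has singular set of codimension at least $7$. These are precisely the hypotheses needed for Proposition \ref{prop:entstablecompact} (the compact case of the classification), giving a contradiction. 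The entropy lower bound for general compact boundary measures and the rigidity then follow from Proposition \ref{prop:bwfinal} and Corollary \ref{cor:closedentropy}, applied directly to $\Sigma$ rather than to a tangent flow. So: the entropy-stability and singular-set control come from \emph{minimization among compact $F$-stationary varifolds} and the \emph{boundary-measure structure} enforced by Bernstein-Wang's outer flow, not from iterated blow-ups. If you want to complete your write-up, replace the blow-up iteration with an appeal to Lemma \ref{lem:bwmin}, Proposition \ref{prop:bwfinal}, and the inductive structure of Theorem \ref{thm:csmall}.
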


Note that for $n=1$ the result follows immediately from the theorems of Gage-Hamilton \cite{gagehamilton} and Grayson \cite{grayson}, which imply that any smooth closed embedded curve shrinks to a round point. Previously Theorem \ref{thm:closedentropyintro} had been established in the cases $2\leq n\leq 6$ by Bernstein and Wang \cite{BW}, using a cleverly constructed weak flow that ensured the extinction time singularity was of a special type. Ketover and Zhou \cite{ketover2015entropy} also gave an independent proof for the $n=2$, non-toric case using min-max theory for the $F$-functional. Our proof of Theorem \ref{thm:closedentropyintro} results from combining the insightful work of Bernstein-Wang together with our classification of entropy-stable \textit{singular} self-shrinkers, which we now describe. 

As critical points of the $F$-functional, self-shrinkers may equivalently be defined by the elliptic equation \begin{equation}\label{eq:self-shrinkereqintro}\vec{H} = -\frac{1}{2}x^\perp,\end{equation} or as the minimal hypersurfaces for the conformal metric $\e^{-\frac{|x|^2}{2n}}\delta_{ij}$ on $\mathbf{R}^{n+1}$. The simplest examples are the generalised cylinders $\mathbf{S}^k(\sqrt{2k})\times \mathbf{R}^{n-k}$ mentioned above. From the self-shrinker equation (\ref{eq:self-shrinkereqintro}) one can see that any minimal cone in $\mathbf{R}^{n+1}$ (with vertex at the origin) is also a self-shrinker, albeit with a nonempty singular set. The precise notion of singular submanifold we will use in this paper is that of an integer rectifiable (integral) varifold; the definitions of the $F$-functional and entropy functional extend in the natural manner to this setting.

The main theorem of this paper is the following extension of Colding-Minicozzi's classification of entropy-stable self-shrinkers \cite{CMgeneric} to the singular setting:

\begin{theorem}
\label{thm:entropyclassintro}
Let $V$ be an $F$-stationary integral $n$-varifold in $\mathbf{R}^{n+1}$, which has orientable regular part and finite entropy, and satisfies the $\alpha$-structural hypothesis for some $\alpha\in(0,\frac{1}{2})$. Suppose that $V$ is not a generalised cylinder $\mathbf{S}^k(\sqrt{2k})\times \mathbf{R}^{n-k}$. Then $V$ is entropy-unstable.

Furthermore, if $V$ also does not split off a line and is not a cone, the unstable variation may be taken to have compact support away from $\sing V$. If $V$ is a stationary cone, the unstable variation may be taken to be a homogenous variation induced by variation of the link away from its singular set. 
\end{theorem}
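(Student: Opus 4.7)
The plan is to extend Colding--Minicozzi's classification of smooth entropy-stable self-shrinkers to the singular varifold setting, with the $\alpha$-structural hypothesis serving as the key regularity input. Under this hypothesis, Wickramasekera's sheeting theorem (applied in the conformal metric $e^{-|x|^2/(2n)}\delta$ in which $F$-stationarity becomes ordinary stationarity) guarantees that $V$ is smoothly embedded off a closed set $\sing V$ of Hausdorff dimension at most $n-7$. In particular, $\reg V$ is a smooth self-shrinker on which the Colding--Minicozzi drift operator
\begin{equation*}
L = \Lap - \tfrac{1}{2}\langle x, \nabla\rangle + |A|^2 + \tfrac{1}{2}
\end{equation*}
and its eigenfunction identities $LH = H$ and $L\langle v,\nu\rangle = \tfrac{1}{2}\langle v,\nu\rangle$ for constant $v\in\mathbf{R}^{n+1}$ are available, while the codimension estimate on $\sing V$ permits capacity-type cutoff arguments of the kind used in classical regularity theory.

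Entropy-stability of $V$ corresponds to non-positivity of $\int \phi L \phi \, e^{-|x|^2/4}$ on admissible variations orthogonal (in the weighted $L^2$) to the neutral directions $H$ and $\langle v, \nu\rangle$, which encode the outer supremum in the entropy. My strategy is to produce a test function $\phi$, built out of combinations of $H$, $|A|^2$, and their quotients as in Colding--Minicozzi, whose quadratic form witnesses instability. To accommodate $\sing V$, I would truncate $\phi$ by a family of cutoffs $\eta_\varepsilon$ vanishing on an $\varepsilon$-neighborhood of $\sing V$; the codimension $\geq 7$ bound allows choosing $\eta_\varepsilon\to 1$ with $\int|\nabla \eta_\varepsilon|^2 e^{-|x|^2/4}\to 0$, so the cutoff and boundary errors vanish in the limit. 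Subtracting the projections onto $H$ and $\langle v,\nu\rangle$ from the truncated $\phi$ would then upgrade the resulting $F$-instability to entropy-instability.

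I would then carry out the case analysis anticipated by the theorem. In the generic case where $V$ is neither a cone nor splits off a line, the variation produced above is already compactly supported away from $\sing V$, giving the stated refinement directly. If $V$ is a stationary cone then $H\equiv 0$, and $F$-stability reduces to area-stability for the link $\Gamma\subset \mathbf{S}^n$, which inherits the structural hypothesis in one lower dimension; a homogeneous extension of an unstable variation for $\Gamma$ chosen away from $\sing\Gamma$ supplies the required homogeneous unstable variation for $V$. If $V$ splits off a line $\mathbf{R}$, then by Fubini the stability analysis decouples and reduces to the cross-section, an $F$-stationary varifold in one lower dimension to which an inductive application of the theorem applies; this is what forces the eventual appearance of the generalized cylinder factor.

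The main technical obstacle is controlling the interaction with $\sing V$: the truncated test functions must preserve strict instability after projection onto the neutral directions, uniformly in $\varepsilon$, and the weighted capacity estimates on $\sing V$ must suffice to absorb every cutoff term. A secondary difficulty is that the base Colding--Minicozzi argument uses completeness and smoothness freely to apply the maximum principle to quantities like $|A|^2/H^2$; here one instead needs versions that work up to $\sing V$ via the structural hypothesis, typically by approximation on sublevel sets of the distance to $\sing V$. I expect the bulk of the work to lie in these technical estimates, with the overall scheme of the proof tracking Colding--Minicozzi closely.
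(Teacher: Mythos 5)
Your high-level scheme tracks the paper's structure (reduce to $F$-instability via Colding--Minicozzi, handle cones through the link, handle splitting by induction), but there are two genuine gaps, one of which is fatal as stated.

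\textbf{The regularity claim is circular as you have set it up.} You assert that the $\alpha$-structural hypothesis alone lets Wickramasekera's sheeting theorem conclude $\dim_{\mathcal H}\sing V \leq n-7$, and then you build your cutoff/capacity machinery on top of that. But Wickramasekera's regularity theory requires a \emph{stability} input in addition to the structural hypothesis; an arbitrary $F$-stationary integral varifold satisfying the $\alpha$-hypothesis can have a much larger singular set. The paper's Proposition~\ref{prop:regularity} is accordingly \emph{conditional} on $\lambda_1(V)>-\infty$. The order of logic in the paper is the reverse of yours: one first proves $F$-instability directly when $\lambda_1(V)<-\tfrac{3}{2}$ with no hypotheses whatsoever on $\sing V$ (Proposition~\ref{prop:unstable32}, which just completes the square in the second variation), and only in the remaining range $\lambda_1(V)\geq-\tfrac{3}{2}>-\infty$ does one get to invoke the sheeting theorem and hence codimension~$7$. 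Without that preliminary dichotomy, your cutoff argument has no singular-set bound to exploit, so the proof does not get started. Relatedly, the ``subtract projections onto $H$ and $\langle v,\nu\rangle$'' step is not as routine as you suggest: on a compactly supported domain $\Omega$ the first Dirichlet eigenfunction is not exactly orthogonal to $H$, and the paper spends Proposition~\ref{prop:unstable} proving a quantitative almost-orthogonality, using the Lemma~\ref{lem:AL2grad} gradient estimates (which need $\mathcal H^{n-4}(\sing V)=0$, again obtained a posteriori from regularity).

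\textbf{The cone case needs a sharp spectral estimate, not just instability of the link.} You propose to take ``an unstable variation for $\Gamma$'' in $\mathbf S^n$ and extend homogeneously. But the second variation of $F$ along a homogeneous variation of $C(W)$ at $x_0=0$ is (Proposition~\ref{lem:2ndvarcritcone})
\begin{equation*}
\tfrac12\pi^{-n/2}\Gamma(\tfrac n2)\int\left(-\phi\wt L\phi + 2\tfrac{\Gamma(\frac{1+n}{2})}{\Gamma(\frac n2)}\phi\langle y,\wt\nu\rangle - \tfrac12|y^\perp|^2\right)\d\mu_W,
\end{equation*}
and after absorbing the translation cross term one needs $\kappa_1(W) < -2\,\Gamma(\tfrac{1+n}{2})^2/\Gamma(\tfrac n2)^2$, which is a concrete negative threshold (close to $-n$). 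Knowing merely that a non-totally-geodesic minimal hypersurface in $\mathbf S^n$ is unstable (i.e. $\kappa_1<0$) does not clear this bar. The paper's argument hinges on the companion estimate $\kappa_1(W)\leq -2(n-1)$ for singular links satisfying the $\alpha$-hypothesis (Theorem~\ref{thm:stabilityeigsphintro}, from~\cite{zhu}), together with $\Gamma(\tfrac{1+n}{2})^2/\Gamma(\tfrac n2)^2 < n-1$ (Lemma~\ref{lem:gamma}). Without citing or reproving that spectral gap, your cone case is incomplete.

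Your Fubini/induction reduction for the split-off-line case matches the paper, and your instinct that a Schoen--Simon--Yau-type argument is needed for the mean-convex classification (to get $|A|\in L^4$ so that it can be used as a test function) is also on the right track, though you leave it unspecified.
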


For precise definitions the reader is referred to Section \ref{sec:notation}; see also Remark \ref{rmk:multiplicity} regarding multiplicities. The $\alpha$-structural hypothesis here allows us to use the regularity theory of Wickramasekera \cite{wick} to control the singular set; it is automatically satisfied for instance if the singular set has vanishing codimension 1 Hausdorff measure. 
One may recall that in \cite[Theorem 0.14]{CMgeneric}, Colding-Minicozzi also considered the varifold setting, but only in dimensions $n\leq 6$ and with the stronger assumption that the singular set has locally finite codimension 2 measure. Under these conditions the regularity theory ensured that the self-shrinker was smooth (or a regular minimal cone), whereas in our general setting we must handle a more significant singular set. Conjecturally, the singular set of any self-shrinker arising from a smooth mean curvature flow has a singular set of codimension at least 3 (see \cite{ilmanen95sing} or \cite{CMP}).

Theorem \ref{thm:closedentropyintro} will follow from the special case of Theorem \ref{thm:entropyclassintro} classifying \textit{compact} entropy-stable self-shrinkers. Similarly we extend the gap theorem of Bernstein-Wang \cite{BW} for the entropy of compact singular self-shrinkers to all dimensions $n\geq 2$, which itself generalised the main theorem of Colding-Ilmanen-Minicozzi-White \cite{CIMW} to the singular setting for $2\leq n \leq 6$. We also extend the result in \cite{BW} on so-called partially collapsed self-shrinkers to all $n\geq 3$. 

Our approach to Theorem \ref{thm:entropyclassintro} mirrors the approach of Colding-Minicozzi \cite{CMgeneric}, but with several key distinctions. To describe these, recall that their proof consists of three main parts, stated broadly as follows:
\begin{enumerate}
\item Entropy-stability implies $F$-stability;
\item $F$-stability implies mean convexity;
\item Mean convexity implies cylindricality. 
\end{enumerate}
A self-shrinker is $F$-stable if it is stable (under compactly supported variations) for the $F$-functional after ``modding out" by translations and dilations, which turn out to be linearly unstable directions for the $F$-functional on every self-shrinker. 

Point (1) above holds for self-shrinkers that are not invariant under one of these elementary symmetries, that is, for self-shrinkers that do not split off a line and are not cones. Of course, a smooth self-shrinker cannot be a cone, but in the singular case one must account for minimal cones, which are always $F$-unstable yet always entropy-stable under compactly supported variations. To deal with this issue, we introduce the concept of homogenous $F$-stability for minimal cones in terms of the corresponding functional on the links; this concept indeed turns out to be equivalent to entropy-stability under homogenous variations.

We further show that any non-flat minimal cone, which satisfies the $\alpha$-structural hypothesis, is in fact homogenously $F$-unstable and hence entropy-unstable. To do so we need to establish variation formulae for the Gaussian area as functionals on the link. Given these, the key observation is that the link, as a (singular) minimal hypersurface in $S^n$, is quite unstable in the sense that the first eigenvalue $\kappa_1$ of the Jacobi operator is very negative. We have proven such an estimate in \cite{zhu}, which we restate as follows for convenience:

\begin{theorem}[\cite{zhu}]
\label{thm:stabilityeigsphintro}
Let $W$ be a stationary integral $(n-1)$-varifold in $\mathbf{S}^n$ which has orientable regular part and satisfies the $\alpha$-structural hypothesis for some $\alpha\in(0,\frac{1}{2})$. Further suppose that $W$ is not totally geodesic in $\mathbf{S}^n$. Then $\kappa_1(W)\leq -2(n-1)$, with equality if and only if $\supp W$ is a Clifford hypersurface $\mathbf{S}^k\left(\frac{k}{n-1}\right)\times \mathbf{S}^l\left(\frac{l}{n-1}\right)$, where $k+l=n-1$. 
\end{theorem}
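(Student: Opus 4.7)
The plan is to exhibit a test function whose Rayleigh quotient for the Jacobi operator of $W$ is at most $-2(n-1)$. On the regular part of $W$, the Jacobi operator is $L = \Delta + |A|^2 + (n-1)$, and the first eigenvalue is variationally characterized as the infimum of
\[
\mathcal{R}(f) = \frac{\int_W |\nabla f|^2 - (|A|^2+(n-1))f^2}{\int_W f^2}
\]
over admissible test functions $f$ on $\reg W$. The natural choice is $f = |A|$. Combining Simons' identity for minimal hypersurfaces in $\mathbf{S}^n$, namely $\tfrac{1}{2}\Delta|A|^2 = |\nabla A|^2 + (n-1)|A|^2 - |A|^4$, with Kato's inequality $|\nabla A|^2 \geq |\nabla|A||^2$ yields on $\{|A|>0\}\cap\reg W$ the pointwise bound
\[
L|A| \;\geq\; 2(n-1)\,|A|.
\]
If $|A|$ were an admissible test function, multiplying by $|A|$ and integrating by parts would immediately give $\mathcal{R}(|A|) \leq -2(n-1)$, hence $\kappa_1(W)\leq -2(n-1)$.

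The main obstacle is making this rigorous in the presence of $\sing W$: the quantity $|A|$ may blow up near the singular set, and Simons' identity only holds on $\reg W$. Here the $\alpha$-structural hypothesis enters crucially: by Wickramasekera's regularity theorem \cite{wick}, $\sing W$ has Hausdorff codimension at least $7$ in $W$, so in particular $\mathcal{H}^{n-3}(\sing W) = 0$. This permits a classical codimension-$2$ capacity construction (compare Schoen--Simon) of smooth cutoffs $\eta_\epsilon \in C^\infty_c(\reg W)$ with $\eta_\epsilon \to 1$ pointwise and $\int_W |\nabla \eta_\epsilon|^2 \to 0$. The plan is to use $\eta_\epsilon |A|$ as an admissible test function, expand $\mathcal{R}(\eta_\epsilon|A|)$, and control the $\nabla\eta_\epsilon$ error terms via Cauchy--Schwarz, using the local $L^2$-bound on $|A|$ coming from stationarity and monotonicity, together with a Caccioppoli-type bound on $\nabla|A|$ derived from the integrated Simons inequality. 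Passing $\epsilon \to 0$ then recovers $\mathcal{R}(|A|) \leq -2(n-1)$.

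For the equality case, suppose $\kappa_1(W) = -2(n-1)$. Then $|A|$ must achieve the infimum (after the approximation above), so on $\reg W$ we have $L|A| = 2(n-1)|A|$ and hence equality throughout the Simons/Kato chain: $|\nabla A| = |\nabla|A||$ and $\Delta|A| = (n-1)|A| - |A|^3$. A standard argument using that $|A|$ is a positive first eigenfunction combined with these identities forces $|A|^2 \equiv n-1$ on the (connected components of the) regular part. Then $\reg W$ is a minimal isoparametric hypersurface of $\mathbf{S}^n$ with $|A|^2 = n-1$, and by the classical rigidity of Chern--do Carmo--Kobayashi and Lawson, $\reg W$ is contained in a Clifford hypersurface. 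Since Clifford hypersurfaces are smooth, an $\alpha$-structural tangent-cone analysis forces $\sing W = \emptyset$, so $W$ is a complete Clifford hypersurface as claimed. The most delicate step is the cutoff construction of paragraph two, where the full strength of the $\alpha$-structural hypothesis (via Wickramasekera) is needed to ensure the singular set is variationally negligible.
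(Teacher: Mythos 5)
The paper does not prove this statement; it is cited from the author's companion paper \cite{zhu} (``which we restate as follows for convenience''). So there is no in-paper proof to compare against. That said, your overall architecture — use $f=|A|$ as a Rayleigh test function, combine Simons' identity with Kato to obtain $\wt{L}|A|\geq 2(n-1)|A|$, handle $\sing W$ via Wickramasekera regularity and cutoffs — is the natural adaptation of Simons' classical argument and is presumably in the spirit of \cite{zhu}. Note also that you can only invoke Wickramasekera's regularity theorem after first assuming $\kappa_1(W)>-\infty$ (otherwise there is nothing to prove), which gives a stability-type inequality to which the regularity theory applies as in Proposition \ref{prop:regularity}.

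There is, however, a concrete gap in your cutoff argument. Expanding $\mathcal{R}(\eta|A|)$ and integrating the Simons inequality $|A|\Delta|A|\geq (n-1)|A|^2-|A|^4$ against $\eta^2$, the cross terms $2\int\eta|A|\langle\nabla\eta,\nabla|A|\rangle$ cancel exactly, and the only surviving error is $\int|A|^2|\nabla\eta|^2$. Your proposed estimate — ``codimension-2 capacity'' giving $\int|\nabla\eta_\epsilon|^2\to 0$, controlled by Cauchy--Schwarz against a local $L^2$ bound on $|A|$ — does not close this term, since $|\nabla\eta_\epsilon|$ blows up like $r^{-1}$ near the singular set and $|A|$ is not bounded there. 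One needs either an $L^4$ bound on $|A|$ (Schoen--Simon--Yau type, as in Section \ref{sec:meanconvex} of this paper) together with $\int|\nabla\eta_\epsilon|^4\to 0$, or the direct stability-based estimate $\int|A|^2\psi^2\lesssim\int|\nabla\psi|^2+\int\psi^2$ with an annular bump $\psi\geq|\nabla\eta_\epsilon|$ (as in Lemma \ref{lem:AL2grad}); both require $\mathcal{H}^{(n-1)-4}(\sing W)=0$ rather than the codimension-$2$ vanishing you invoke. Fortunately Wickramasekera gives codimension at least $7$, so the argument survives, but with a different estimate than the one you sketched. A second gap is in the equality case: plain Kato equality $|\nabla A|=|\nabla|A||$ together with the eigenvalue equation $\Delta|A|=(n-1)|A|-|A|^3$ does not by itself force $|A|$ constant; you need the refined Kato inequality for traceless Codazzi tensors on a minimal hypersurface, $|\nabla A|^2\geq\tfrac{n+1}{n-1}|\nabla|A||^2$, whose equality case forces $\nabla|A|=0$ and hence $\nabla A=0$, at which point Lawson's classification of parallel minimal hypersurfaces in $\mathbf{S}^n$ finishes (the invocation of Chern--do Carmo--Kobayashi, which classifies the pinching $|A|^2\leq n-1$, is not quite what is needed here).
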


Note that in the smooth setting the above estimate is a classical result of J. Simons \cite{simonsjim}.

The most difficult is point (2), for which we show that $F$-stability implies mean convexity on the regular part. As in \cite{CMgeneric}, the key observation is that, on any self-shrinker, the mean curvature $H$ is an eigenfunction of the stability operator $L$ for the $F$-functional, with eigenvalue $-1$. Thus the goal will be to construct $F$-unstable variations when the first eigenvalue of $L$ is less than $-1$, but the singular set causes technical difficulties in the analysis. The main technical obstacle is to obtain effective $L^2$ estimates close to the singular set for the gradient $\nabla \log u$, where $u$ is a positive eigenfunction of $L$ on a subdomain of the regular part, as well as for the second fundamental form $A$. These estimates can be accomplished using a good choice of cutoff functions so long as the singular set has vanishing codimension 4 measure (see Lemma \ref{lem:AL2grad}). This $L^2$ control then allows us to quantify the almost-orthogonality between $H$ and the first eigenfunction $u$ when the subdomain is large enough, which leads to the desired $F$-instability (see Proposition \ref{prop:unstable}).

To complete the classification of singular entropy-stable self-shrinkers, we then extend point (3), the classification of mean convex self-shrinkers, to the singular setting as follows:

\begin{theorem}
\label{thm:meanconvexintro}
Let $V$ be an $F$-stationary integral $n$-varifold in $\mathbf{R}^{n+1}$, with orientable regular part and finite entropy. Further suppose that $\mathcal{H}^{n-1}(\sing V)=0$. If $H\geq 0$ on $\reg V$ then either $V$ is a stationary cone, or $\supp V$ is a generalised cylinder $\mathbf{S}^k(\sqrt{2k})\times\mathbf{R}^{n-k}$. 
\end{theorem}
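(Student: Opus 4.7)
The plan is to follow the mean-convex classification of Huisken and Colding-Minicozzi, carried out on the regular part, with the crucial new ingredient being justification of integration-by-parts estimates through the singular set. On $\reg V$, the self-shrinker is smooth and the mean curvature $H$ satisfies $LH = H$ for the drift stability operator $L = \mathcal{L} + |A|^2 + \tfrac{1}{2}$, where $\mathcal{L} = \Delta - \tfrac{1}{2}\langle x, \nabla \cdot \rangle$ denotes the drift Laplacian. Equivalently, $\mathcal{L}H = (\tfrac{1}{2} - |A|^2)H$, so the strong maximum principle, applied via a compact exhaustion to each connected component of $\reg V$, gives the dichotomy: either $H \equiv 0$ on that component or $H > 0$ there.

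In the first alternative, the self-shrinker equation (\ref{eq:self-shrinkereqintro}) forces $x^\perp = 0$, so the component is an open subset of a cone with vertex at the origin. Because $\mathcal{H}^{n-1}(\sing V) = 0$ and $V$ is integral and $F$-stationary with finite entropy, this scaling invariance propagates across the singular set, and $V$ is a stationary cone. In the second alternative, $H > 0$ on $\reg V$, and I would follow Colding-Minicozzi: setting $\omega = \log H$, one computes
\begin{equation*}
\mathcal{L}\omega = \tfrac{1}{2} - |A|^2 - |\nabla \omega|^2.
\end{equation*}
Combining this with the Simons-type identity for $L|A|^2$, one tests against $H^2$ (times an appropriate cutoff) in the Gaussian-weighted inner product and integrates by parts to produce a Bochner-type identity whose rigidity case forces $|A|^2/H^2 \equiv 1/(2k)$ for some integer $k \geq 1$. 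The structure $\supp V = \mathbf{S}^k(\sqrt{2k}) \times \mathbf{R}^{n-k}$ then follows from the umbilicity of the spherical factor together with a standard splitting argument based on the parallel nullity of $A$ in the complementary directions.

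The main obstacle is making the integrations by parts rigorous through the singular set. One requires cutoff functions $\varphi_\epsilon$ supported away from $\sing V$, converging to $1$ on $\reg V$, and satisfying $\int |\nabla \varphi_\epsilon|^2 (1 + |A|^2 + |\nabla \omega|^2) \e^{-|x|^2/4} \to 0$. The hypothesis $\mathcal{H}^{n-1}(\sing V) = 0$ permits covering $\sing V$ by balls of small total $(n-1)$-content, and the resulting logarithmic cutoffs have arbitrarily small Gaussian-weighted Dirichlet energy; this is precisely the construction of Lemma \ref{lem:AL2grad}. The finite entropy assumption furnishes polynomial volume bounds, so standard cutoffs at large $|x|$ also have vanishing weighted gradient energy. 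Weighted $L^2$ bounds on $|A|$ and $|\nabla \log H|$ in a neighbourhood of $\sing V$ follow from testing $LH = H$ and the Simons identity against $H\varphi_\epsilon^2$ and $\varphi_\epsilon^2$ respectively and absorbing the gradient-of-cutoff terms. With these ingredients in hand, all boundary contributions vanish in the limit and the Colding-Minicozzi rigidity argument goes through as in the smooth case.
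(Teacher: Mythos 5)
Your overall roadmap—use $LH=H$ to split into $H\equiv 0$ versus $H>0$, handle the cone case via $x^\perp=0$, and in the positive case run a Colding--Minicozzi rigidity argument with cutoffs adapted to the singular set—is the same as the paper's. However, there is a genuine gap in how you justify the integration by parts when $H>0$.

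You assert that the hypothesis $\mathcal{H}^{n-1}(\sing V)=0$ already permits cutoff functions $\varphi_\epsilon$ with $\int|\nabla\varphi_\epsilon|^2(1+|A|^2+|\nabla\omega|^2)\rho\to 0$, and that ``this is precisely the construction of Lemma \ref{lem:AL2grad}.'' That is not correct: as Lemma \ref{lem:gradconv} shows, $\mathcal{H}^{n-q}(\sing V)=0$ yields control only on $\int|\nabla\phi_R|^q\rho$, so codimension $1$ gives merely $L^1$-control on the cutoff gradient, far short of the weighted $L^2$-gradient estimates multiplied by $|A|^2+|\nabla\log H|^2$ that Lemma \ref{lem:AL2grad} provides—and that lemma explicitly requires $\mathcal{H}^{n-4}(\sing V)=0$. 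The paper bridges this gap by a bootstrap that your argument omits: when $H>0$, Lemma \ref{lem:specbotlower} applied to $u=H$ shows $\lambda_1(\Sigma)\geq -1$, and then the regularity theory (Proposition \ref{prop:regularity}, relying on Wickramasekera via the $\alpha$-structural hypothesis, which $\mathcal{H}^{n-1}(\sing V)=0$ does imply) upgrades the singular set to codimension at least $7$. Only after this upgrade are the cutoff and integration-by-parts machinery (Lemmas \ref{lem:AL2grad}, \ref{lem:eigestimate}, \ref{lem:ibp}) available. Without that step your construction does not close.

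Two further points are glossed over in your sketch. First, you need to invoke the connectedness of $\reg V$ (Lemma \ref{lem:connectedness}, using $\mathcal{H}^{n-1}(\sing V)=0$) \emph{before} the Harnack/maximum-principle dichotomy, so that one cannot have $H\equiv 0$ on one component and $H>0$ on another; phrasing the dichotomy componentwise and then ignoring the possibility of mixed behavior leaves a loose end. Second, the integration by parts in the rigidity step requires $|A|$ to be weighted $L^4$ (and $|\nabla A|$ weighted $L^2$), which the paper obtains by a Schoen--Simon--Yau argument in Lemma 7.2; testing ``against $H^2$ times a cutoff'' does not by itself produce these a priori bounds, and without them the identity leading to the constancy of $|A|/H$ (Lemma \ref{lem:tauconst}) is not justified. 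These are precisely the points where the singular setting introduces new difficulties, so they need to be made explicit rather than absorbed into the phrase ``all boundary contributions vanish in the limit.''
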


As in \cite{CMgeneric}, the essential observation for classifying mean convex self-shrinkers is that both $A$ and $H$ are eigentensors of the stability operator, and the key point is to obtain $L^4$ control on $|A|$ in order to justify its use as a test function. To resolve this issue we adapt the Schoen-Simon-Yau \cite{SSY} technique to upgrade the $L^2$ estimates for $|A|$ to the desired $L^4$ bound. 

Let us now briefly outline the structure of this paper. Section \ref{sec:prelims} contains precise definitions as well as background related to entropy and self-shrinkers. Before proving the classification of entropy-stable singular self-shrinkers, we present its applications in Section \ref{sec:applications}. In particular, we describe the proof of Theorem \ref{thm:closedentropyintro} assuming Theorem \ref{thm:entropyclassintro}. 

We then quickly review the Colding-Minicozzi theory in Section \ref{sec:CMtheory}, including the relation between $F$-stability and entropy-stability, and the regularity for stable self-shrinkers. We analyse the Gaussian areas of a cone in Section \ref{sec:Fcone}, treating them as functionals on the link, and (homogenous) $F$-stability of the cone to the stability spectrum of the link.

We fix certain cutoff functions in Section \ref{sec:intsing} which will be used in the remainder of the paper to handle integration around the singular set. In Section \ref{sec:stability}, we show that $F$-stability implies mean convexity, by characterising the bottom of the stability spectrum for singular self-shrinkers and constructing $F$-unstable variations when the first stability eigenvalue $\lambda_1$ is less than $-1$. Then, in Section \ref{sec:meanconvex}, we prove the classification Theorem \ref{thm:meanconvexintro} of mean convex singular self-shrinkers. 

Finally, in Section \ref{sec:class}, we combine our results in order to classify $F$-stable self-shrinkers, homogenously $F$-stable stationary cones and entropy-stable self-shrinkers, in the singular setting. In particular, that section contains the proof of Theorem \ref{thm:entropyclassintro}. 

\begin{acknowledgements}
The author would like to thank Prof. Bill Minicozzi for his warm encouragement and invaluable advice. We would also like to thank Prof. Shing-Tung Yau for his continued support.
Finally we thank Xin Zhou for several insightful conversations. This work is supported in part by the National Science Foundation under grant DMS-1308244.
\end{acknowledgements}

\section{Preliminaries}
\label{sec:prelims}

\subsection{Notation and background}
\label{sec:notation}

\subsubsection{Hypersurfaces}

In this paper a hypersurface will always mean a $C^2$ embedded codimension 1 submanifold $\Sigma$ in a smooth Riemannian manifold $N$. We write $\ol{\nabla}$ for the ambient connection, reserving $\nabla$ for the tangential component and $D$ for the Euclidean connection. Our convention for the Laplacian is \begin{equation} \Lap_\Sigma f = \div_\Sigma(\nabla^\Sigma f).\end{equation} A hypersurface $\Sigma$ is said to be minimal if its mean curvature vector in $N$ is zero, and $\Sigma^n\subset\mathbf{R}^{n+1}$ is said to be a self-shrinker if its mean curvature vector satisfies \begin{equation} \vec{H} = -\frac{1}{2}x^\perp,\end{equation} where $x$ is the position vector. We typically write $y^T$ for the tangential projection of a vector $y\in\mathbf{R}^{n+1}$ and $y^\perp=y-y^T$ for its normal projection.

If $\Sigma$ is two-sided, there is a well-defined unit normal field $\nu$ and we denote by $A$ the second fundamental form of $\Sigma$ along $\nu$. We take the mean curvature on $\Sigma$ to be \begin{equation}H = \div_\Sigma \nu.\end{equation} Note that if the ambient space is orientable, then the hypersurface $\Sigma$ is two-sided if and only if it is orientable (see for instance \cite[Chapter 4]{hirsch}). 

We will typically use $\Sigma^n$ for a hypersurface in $\mathbf{R}^{n+1}$ and $M^{n-1}$ for a hypersurface in $\mathbf{S}^n$, where $\mathbf{S}^n$ denotes the round unit sphere in $\mathbf{R}^{n+1}$. We also denote by $\mathbf{S}^n(r)$ the round sphere of radius $r$. For clarity we will use tildes to distinguish geometric quantities on $M$ from those on $\Sigma$, for instance $\wt{A}$, $\wt{H}$, etc. 

We say that a hypersurface $\Sigma^n\subset\mathbf{R}^{n+1}$ has Euclidean volume growth if there exists a constant $C_V>0$ so that $\vol(\Sigma\cap B_r(x)) \leq C_V r^n$ for any $r>0$ and any $x\in\mathbf{R}^{n+1}$. Here, and henceforth, $B_r(x)$ denotes the open Euclidean ball of radius $r$ in $\mathbf{R}^{n+1}$ centred at $x$. For convenience we set $B_r=B_r(0)$. 

\subsubsection{Varifolds}

In this paper a varifold will always mean an integer rectifiable (integral) varifold $V$ in a Riemannian manifold $N^{n+1}$. We write $\mathcal{H}^k$ for the $k$-dimensional Hausdorff measure in $N$. The reader is directed to \cite{simon} for the basic definitions for varifolds. An integral varifold $V$ is determined by its mass measure, which we denote $\mu_V$. We will always assume that the support $\supp V :=\supp\mu_V$ is connected. We define the regular part $\reg V$ to be the set of points $x \in \supp V$ around which $\supp V$ is locally a $C^2$ hypersurface; the singular set is then $\sing V = \supp V \setminus \reg V$. 

An integer rectifiable $k$-varifold $V$ has an approximate tangent plane $T_x V$ at $\mu_V$-almost every $x$ in $\supp V$. We may thus define the divergence almost everywhere by \begin{equation} (\div_V X) (x)= \div_{T_x V} X(x) = \sum_{i=1}^k \langle E_i,\ol{\nabla}_{E_i} X\rangle(x)\end{equation} where $E_i$ is an orthonormal basis for $T_x V$ and $\ol{\nabla}$ is the ambient connection. The varifold $V$ is then said to have generalised mean curvature vector $\vec{H}$, if $\vec{H}$ is locally integrable and the first variation is given by
\begin{equation} 
\label{eq:genmeancurv}
\int \div_V X \d\mu_V  = -\int \langle X,\vec{H}\rangle \d \mu_V
\end{equation}
for any ambient $C^1$ vector field $X$ with compact support. 

For convenience will say that a varifold $V$ is orientable if and only if $\reg V$ is orientable.

For most of our results we will need some control on the singular set, although we do not assume any such control for now. The weakest condition we will use is the $\alpha$-structural hypothesis of Wickramasekera (\cite{wick}, see also \cite[Section 12]{CMgeneric}): An integral varifold $V$ satisfies the $\alpha$-structural hypothesis for some $\alpha\in(0,1)$, if no point of $\sing V$ has a neighbourhood in which $\supp V$ corresponds to the union of at least three embedded $C^{1,\alpha}$ hypersurfaces with boundary that meet only along their common $C^{1,\alpha}$ boundary. Note that the $\alpha$-structural hypothesis is automatically satisfied if, for instance, $\sing V$ has vanishing codimension 1 Hausdorff measure. 

Note that any hypersurface $\Sigma^n$ with locally bounded $n$-dimensional Hausdorff measure defines an integral varifold that we denote by $[\Sigma]$. 

We say that a $k$-varifold $V$ in $\mathbf{R}^{n+1}$ has Euclidean volume growth if there exists a constant $C_V>0$ so that $\mu_V(B_r(x))  \leq C_V r^k$ for any $r>0$ and any $x\in\mathbf{R}^{n+1}$. 

We will typically use $V$ to denote an integral $n$-varifold in $\mathbf{R}^{n+1}$ and $W$ to denote an integral $(n-1)$-varifold in $\mathbf{S}^n \subset \mathbf{R}^{n+1}$. 

We will say that $V$ splits off a line if it is invariant under translations in some direction; if this is the case then, up to a rotation of $\mathbf{R}^{n+1}$, we may write $\mu_V = \mu_\mathbf{R} \times \mu_{\wt{V}}$ as the product of a multiplicity one line with an integer rectifiable $(n-1)$-varifold $\wt{V}$ in $\mathbf{R}^n$. We say that an integral varifold $V$ is a cone if it is invariant under dilations about the origin; if this is the case then the link $W=V \mr \mathbf{S}^n$ 
is indeed an integer rectifiable $(n-1)$-varifold in $\mathbf{S}^n$ and we write $V=C(W)$. Of course, $C(W)$ is orientable if and only if $W$ is orientable. 

\subsubsection{Gaussian area and entropy}

We denote $\rho_{x_0,t_0}(x)=(4\pi t_0)^{-n/2}\e^{-\frac{|x-x_0|^2}{4t_0}}$. The Gaussian area of $V$ centred at $x_0\in\mathbf{R}^{n+1}$ with scale $t_0>0$ is then given by \begin{equation}F_{x_0,t_0}(V)=\int \rho_{x_0,t_0}\d\mu_V .\end{equation} The normalisation is so that any multiplicity 1 hyperplane has Gaussian area $F_{x_0,t_0}(\mathbf{R}^n)=1$. For convenience we set $\rho=\rho_{0,1}$ and $F=F_{0,1}$. The entropy introduced by Colding-Minicozzi \cite{CMgeneric} may be defined as the supremum over all centres and scales, \begin{equation}\Lambda(V) = \sup_{\substack{x_0\in\mathbf{R}^{n+1}\\t_0>0}} F_{x_0,t_0}(V).\end{equation} 

Note that finite entropy implies Euclidean volume growth:

\begin{lemma} 
\label{lem:volumegrowth}
Let $V$ be an integral $n$-varifold in $\mathbf{R}^{n+1}$ with finite entropy $\Lambda(V)<\infty$. Then for any $x_0$ and any $r>0$, we have \begin{equation} \mu_V(B_r(x_0)) \leq \e^{\frac{1}{4}} (4\pi)^\frac{n}{2} \Lambda(V)r^n.\end{equation}
\end{lemma}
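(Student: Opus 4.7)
The plan is to exploit the definition of entropy as a supremum of Gaussian areas by choosing a particularly convenient centre and scale, namely $x_0$ itself and $t_0 = r^2$. The key observation is that the Gaussian weight $\rho_{x_0, r^2}$ is bounded below on $B_r(x_0)$ by a universal constant times $r^{-n}$, so truncating the Gaussian-area integral to this ball immediately controls $\mu_V(B_r(x_0))$ from above.

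More precisely, I would first note that for $x \in B_r(x_0)$ one has $|x-x_0|^2/(4r^2) < 1/4$, hence
\begin{equation*}
\rho_{x_0, r^2}(x) = (4\pi r^2)^{-n/2} e^{-|x-x_0|^2/(4r^2)} \geq (4\pi r^2)^{-n/2} e^{-1/4}.
\end{equation*}
Integrating this pointwise bound against the (nonnegative) mass measure $\mu_V$ over $B_r(x_0)$, and extending the resulting integral to all of $\mathbf{R}^{n+1}$, gives
\begin{equation*}
(4\pi r^2)^{-n/2} e^{-1/4}\, \mu_V(B_r(x_0)) \leq \int_{B_r(x_0)} \rho_{x_0, r^2} \d\mu_V \leq F_{x_0, r^2}(V).
\end{equation*}

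Finally, since $\Lambda(V) = \sup_{y_0, s_0} F_{y_0, s_0}(V) \geq F_{x_0, r^2}(V)$ by the definition of entropy (with the choice $y_0 = x_0$, $s_0 = r^2 > 0$), rearranging yields
\begin{equation*}
\mu_V(B_r(x_0)) \leq e^{1/4}(4\pi)^{n/2} r^n \Lambda(V),
\end{equation*}
as desired. There is no real obstacle here: the only thing to be slightly careful about is that $\mu_V$ is a Radon measure, so the integrals and inequalities above are justified without any regularity assumption on $V$ beyond being an integral varifold.
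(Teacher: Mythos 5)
Your proof is correct and is essentially identical to the paper's: both choose the Gaussian weight centred at $x_0$ with scale $t_0 = r^2$, bound it from below on $B_r(x_0)$ by $(4\pi r^2)^{-n/2}\e^{-1/4}$, and invoke $\Lambda(V)\geq F_{x_0,r^2}(V)$. No differences worth noting.
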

\begin{proof}
As $\e^{-\frac{|x-x_0|^2}{4r^2}} \geq \e^{-\frac{1}{4}}$ for any $x\in B_r(x_0)$, we have \begin{equation} \mu_V(B_r(x_0))\leq \e^{\frac{1}{4}} \int \e^{-\frac{|x-x_0|^2}{4r^2}} \d\mu_V(x).\end{equation} The result follows by definition of the entropy $\Lambda(V)$.
\end{proof}

\subsubsection{Stationary and $F$-stationary varifolds}

We say that an $n$-varifold $V$ in $\mathbf{R}^{n+1}$ is stationary (for area) if it has zero generalised mean curvature $\vec{H}=0$. In particular the regular part must be minimal in $\mathbf{R}^{n+1}$. It is straightforward to see that a cone $V=C(W)$ is stationary if and only if the link $W$ is stationary in $\mathbf{S}^n$. Here an integral $(n-1)$-varifold $W$ in $\mathbf{S}^n \subset \mathbf{R}^{n+1}$ is stationary if its generalised mean curvature in $\mathbf{R}^{n+1}$ is given by $\vec{H}(p)=-(n-1)p$. In particular its regular part is minimal in $\mathbf{S}^n$. 

We say that $V$ is $F$-stationary if it is instead stationary for the $F$-functional defined above, or alternatively with respect to the conformal metric $\e^{-\frac{|x|^2}{2n}}\delta_{ij}$ on $\mathbf{R}^{n+1}$. Equivalently, its generalised mean curvature is given as before by \begin{equation}\vec{H}=-\frac{1}{2} x^\perp.\end{equation} In particular the regular part must be a self-shrinking hypersurface. Also, it follows that a cone $V=C(W)$ is $F$-stationary if and only if it is stationary in $\mathbf{R}^{n+1}$.  

A consequence of Brakke's regularity theorem is that any self-shrinker with entropy close enough to 1 must be a hyperplane \cite{CIMW}. 

Note that the constancy theorem implies that any stationary (or $F$-stationary) varifold has locally constant multiplicity on its regular part. 

\subsubsection{Connectedness}

It will be useful to record a connectedness lemma that follows from the varifold maximum principle of Wickramasekera \cite[Theorem 19.1]{wick} together with the work of Ilmanen in \cite{ilmanen96max}, who proved the same result but with a stronger hypothesis on the singular set.  

\begin{lemma}
\label{lem:connectedness}
Let $V$ be a stationary integral $n$-varifold in a smooth Riemannian manifold $N^{n+1}$, with $\mathcal{H}^{n-1}(\sing V)=0$. Then $\reg V$ is connected if and only if $\supp V$ is connected. 
\end{lemma}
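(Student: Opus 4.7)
The plan is to establish the two directions separately, with the reverse implication carrying the substantive content.

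For the forward implication (connected $\reg V$ implies connected $\supp V$), the argument is essentially topological. First I would observe that the hypothesis $\mathcal{H}^{n-1}(\sing V) = 0$ is more than enough to force $\mathcal{H}^n(\sing V) = 0$, and hence $\mu_V(\sing V) = 0$ since the density of an integral varifold is integer-valued and $\mu_V$ is absolutely continuous with respect to $\mathcal{H}^n \mr \supp V$. By the definition of $\supp V$, every open neighborhood of a point of $\supp V$ has positive $\mu_V$-mass, hence cannot be contained in $\sing V$. Therefore $\overline{\reg V} = \supp V$, and the closure of a connected subspace of a topological space is connected.

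For the converse, I would argue by contradiction. Suppose $\supp V$ is connected but $\reg V$ is not, and write $\reg V = U_1 \sqcup U_2$ as a disjoint union of nonempty components. Since $\reg V$ is locally Euclidean, each $U_i$ is both open and closed in $\reg V$, so $\overline{U_i} \cap \reg V = U_i$, which gives $\overline{U_1} \cap \overline{U_2} \subseteq \sing V$. Since $\overline{U_1} \cup \overline{U_2} = \overline{\reg V} = \supp V$ is connected and both closures are nonempty, their intersection must contain a point $p \in \sing V$. Setting $V_i := V \mr U_i$, the identity $\mu_V(\sing V) = 0$ yields a decomposition $V = V_1 + V_2$ of integral varifolds.

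The main obstacle — and the step that genuinely uses the hypothesis $\mathcal{H}^{n-1}(\sing V) = 0$ — will be to verify that each $V_i$ is itself stationary in all of $N$, not merely in $N \setminus \sing V$. Away from $\sing V$ this is immediate because $U_1$ and $U_2$ have disjoint closures there. Near $\sing V$ I would follow the approach of Ilmanen \cite{ilmanen96max}: choose a sequence of cutoff functions $\eta_\epsilon$ vanishing on a neighborhood of $\sing V$ and satisfying $\int |\ol{\nabla} \eta_\epsilon|\,d\mu_V \to 0$, whose existence is precisely a consequence of the $(n-1)$-dimensional Hausdorff smallness of $\sing V$. Testing the stationarity of $V$ against vector fields of the form $\eta_\epsilon X$, where $X$ is compactly supported in a region where only one of the $U_i$ contributes, and passing to the limit $\epsilon \to 0$, gives stationarity of each $V_i$ in $N$.

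Having produced two stationary integral varifolds $V_1, V_2$ whose supports meet at a point $p \in \sing V$, I would close the argument by invoking Wickramasekera's strong varifold maximum principle \cite[Theorem 19.1]{wick}, whose formulation accommodates precisely our hypothesis $\mathcal{H}^{n-1}(\sing V)=0$ on the singular sets (whereas Ilmanen's earlier version required a stronger smallness condition). The principle forces $\supp V_1 = \supp V_2$ in a neighborhood of $p$; combined with the fact that every regular point in that neighborhood lies in exactly one of $U_1$ or $U_2$, this yields the desired contradiction and completes the proof.
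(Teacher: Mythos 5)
Your proposal is correct and takes essentially the same route as the paper's (brief) sketch: decompose $V$ into the varifolds induced by a clopen partition of $\reg V$, use the $\mathcal{H}^{n-1}$-smallness of $\sing V$ to show each piece is stationary in all of $N$ (your direct cutoff argument is just an unpacking of what the paper quotes from \cite{ilmanen96max}, namely that under this hypothesis stationarity reduces to vanishing mean curvature plus a local volume bound, the latter coming from monotonicity for $V$), and then invoke Wickramasekera's maximum principle \cite[Theorem 19.1]{wick} at a touching point in $\sing V$. The only cosmetic point is that "disjoint union of nonempty components" should read "disjoint union of two nonempty relatively open sets," since $\reg V$ could have more than two components — but this does not affect the argument.
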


The key point, arguing as in the proof of \cite[Theorem A(ii)]{ilmanen96max}, is that under the assumption $\mathcal{H}^{n-1}(\sing V)=0$, stationarity is equivalent to having vanishing mean curvature together with a local Euclidean volume bound. Each component of $\reg V$ therefore defines a stationary varifold and the varifold maximum principle applies to show that they must coincide. 

\subsubsection{Entropy-stability}

A smooth self-shrinker $\Sigma$ is entropy-stable if it is a local minimum for the entropy functional amongst $C^2$ graphs over $\Sigma$. Here we make this notion precise for varifolds. We first define normal variations that are not required to be compactly supported.

\begin{definition}
\label{def:normalvar}
Let $V$ be an integral $n$-varifold in a manifold $N^{n+1}$ and consider a complete Lipschitz vector field $X$ on $N$. Further suppose that $X$ vanishes on $\sing V$ and is $C^2$ on $N\setminus \sing V$. Writing $\{\Phi^X_s\}_{s\in(-\epsilon,\epsilon)}$ for the flow of $X$, we say that the image varifolds 
\begin{equation} V_s := (\Phi^X_s)_\# V\end{equation}
form a normal variation of $V$ if additionally $X(x) \perp T_x\reg V_s$ for all $s$ and any $x\in \reg V_s$. 
\end{definition}

This definition includes deformations by compactly supported normal graphs over an orientable regular part $\reg V$, since we can construct a smooth ambient field $X$ by extending in a neighbourhood of $\reg V$ away from the singular set. Similarly it includes homogenous variations of a cone $V=C(W)$ in $\mathbf{R}^{n+1}$ induced by compactly supported normal graphs over $\reg W$; in this case the ambient field $X$ only fails to be smooth at the origin.

\begin{definition}
\label{def:entstable1}
We say that an $F$-stationary varifold $V$ is entropy-unstable if there exists a normal variation $V_s$ of $V$ satisfying $\Lambda(V_s)<\Lambda(V)$ for $s>0$. We say that $V$ is entropy-stable if it is not entropy-unstable.
\end{definition}

\subsubsection{$F$-stability}

The notion of $F$-stability of \cite{CMgeneric} extends to the singular setting by requiring that the variation take place away from the singular set.

\begin{definition}
Let $V$ be an orientable $F$-stationary $n$-varifold in $\mathbf{R}^{n+1}$. We say that $V$ is $F$-unstable if there is a normal variation $V_s$ of $V$, compactly supported away from $\sing V$, such that for any variations $x_s$ of $x_0=0$ and $t_s$ of $t_0=1$, we have $\pr_{s}^2|_{s=0} F_{x_s,t_s}(V_s) <0$. 
\end{definition}

$F$-stability is no longer suited for studying the entropy when $V$ is a cone, since one may always zoom away from the compact variation. Therefore, we instead consider homogenous variations and introduce the notion of homogenous $F$-stability for stationary cones as follows:

\begin{definition}
Let $W$ be a stationary $(n-1)$-varifold in $\mathbf{S}^n$. We say that $W$ is homogenously $F$-unstable if there is a normal variation $W_s$ of $W$ in $\mathbf{S}^n$, compactly supported away from $\sing W$, such that for any variation $x_s$ of $x_0=0$, we have $\pr_{s}^2|_{s=0} F_{x_s,1}(C(W_s)) < 0$. We say that $W$ is homogenously $F$-stable if it is not homogenously $F$-unstable.

If $V=C(W)$ is a stationary $n$-cone in $\mathbf{R}^{n+1}$ we say that $V$ is homogenously $F$-stable if and only if $W$ is homogenously $F$-stable. 
\end{definition}

The restriction $t_0=1$ will suffice since for any cone we have $F_{x_0,t_0}(C(W)) = F_{\frac{x_0}{\sqrt{t_0}},1}(C(W))$ by dilation invariance. Note that any stationary cone has finite entropy (see Corollary \ref{cor:coneentropy}). 

\subsubsection{Stability eigenvalues}

Let $\Sigma^n$ be an orientable self-shrinker in $\mathbf{R}^{n+1}$. The second variation operator for the $F$-functional is given by the operator \begin{equation}L=\mathcal{L} + \frac{1}{2} + |A|^2,\end{equation} where $\mathcal{L}$ is the drift Laplacian \begin{equation}\mathcal{L}=\Lap_\Sigma - \frac{1}{2}\langle x,\nabla^\Sigma\cdot\rangle.\end{equation} For (connected) open domains $\Omega \ssubset  \Sigma$ the Dirichlet spectrum $\{\lambda_i(\Omega)\}_{i\geq 1}$ of $L$ on $\Omega$ is well-defined. We define the first stability eigenvalue (with respect to Gaussian area) of $\Sigma$ by 
\begin{equation}
\lambda_1(\Sigma) = \inf_{\Omega} \lambda_1(\Omega) = \inf_f \frac{\int_\Sigma (|\nabla^\Sigma f|^2-|A|^2f^2-\frac{1}{2}f^2)\rho}{\int_\Sigma f^2\rho}.\end{equation}
 Here the second infimum may be taken over Lipschitz functions $f$ with compact support in $ \Sigma$. Note that the infimum could be $-\infty$. If, however, we have $\lambda_1 = \lambda_1(\Sigma)>-\infty$ then we immediately get the stability inequality 
\begin{equation}
\label{eq:stabilityineq}
\int_\Sigma |A|^2f^2\rho \leq \int_\Sigma |\nabla f|^2\rho  + (-\frac{1}{2} - \lambda_1)\int_\Sigma f^2\rho,
\end{equation}
for Lipschitz functions $f$ compactly supported in $ \Sigma$. 
 
 If $V$ is an orientable $F$-stationary varifold, we set $\lambda_1(V) = \lambda_1(\reg V)$. 
 
 It will be useful to recall the following elementary eigenfunctions of $L$:
 
 \begin{lemma}[\cite{CMgeneric}, Theorem 5.2]
On any smooth orientable self-shrinker, for any constant vector $y$ we have $L\langle y,\nu\rangle = \frac{1}{2}\langle y,\nu\rangle$ and $LH=H$. 
 \end{lemma}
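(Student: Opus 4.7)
The plan is to verify both identities by direct computation, as in \cite{CMgeneric}. The conceptual reason the identities hold is that $\langle y,\nu\rangle$ and $H$ generate the two basic symmetry families of the $F$-functional---ambient translations and parabolic rescalings---and the self-shrinker equation is precisely what forces the drift term in $\mathcal{L}$ to combine with $\Lap_\Sigma$ to produce the stated eigenvalues $\tfrac{1}{2}$ and $1$.

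First I would record the universal identity, valid on any two-sided $C^3$ hypersurface $\Sigma\subset\mathbf{R}^{n+1}$ and any constant vector $y$:
\begin{equation*}
\Lap_\Sigma \langle y,\nu\rangle = \langle y^T, \nabla H\rangle - |A|^2\langle y,\nu\rangle.
\end{equation*}
In a local orthonormal frame $\{E_i\}$, the Weingarten relation yields $\nabla_i\langle y,\nu\rangle = -A_{ij}\langle y, E_j\rangle$; differentiating once more and invoking the Codazzi identity $\nabla_i A_{ij}=\nabla_j H$ gives the formula.

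Next I would specialize to the self-shrinker. The equations $\vec H=-\tfrac{1}{2}x^\perp$ and $\vec H=-H\nu$ give $H=\tfrac{1}{2}\langle x,\nu\rangle$, and the Weingarten computation applied to $\langle x,\nu\rangle$ (using $\langle E_j,\nu\rangle=0$) yields $\nabla_j H = -\tfrac{1}{2}A_{jk}\langle x^T,E_k\rangle$. The key observation is that this forces
\begin{equation*}
\langle y^T,\nabla H\rangle \,=\, -\tfrac{1}{2}A_{jk}\langle y^T,E_j\rangle\langle x^T,E_k\rangle \,=\, \tfrac{1}{2}\langle x,\nabla\langle y,\nu\rangle\rangle,
\end{equation*}
by symmetry of $A$, so that the drift term in $\mathcal{L}\langle y,\nu\rangle$ exactly absorbs the gradient-of-$H$ contribution. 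This gives $\mathcal{L}\langle y,\nu\rangle = -|A|^2\langle y,\nu\rangle$, and adding $\tfrac{1}{2}+|A|^2$ yields the first identity.

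For $LH=H$ I would repeat the argument with $y$ replaced by the position vector $x$, now accounting for $D_{E_i}x=E_i$. This produces one extra contribution at first order (which vanishes since $\langle E_i,\nu\rangle=0$) and, via the trace of the Weingarten map, an additional $\tfrac{1}{2}H$ term at second order. After collecting against the drift, one obtains $\mathcal{L} H = (\tfrac{1}{2}-|A|^2)H$, and hence $LH=H$. The only real obstacle here is not conceptual but notational: keeping the sign conventions for $A$, $\nu$ and $\vec H$ consistent throughout, and invoking the symmetry $A_{ij}=A_{ji}$ at the right moments to secure the crucial cancellations.
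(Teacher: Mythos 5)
Your computation is correct and reproduces the standard argument. Note that the paper itself does not prove this lemma but simply cites \cite[Theorem 5.2]{CMgeneric}; your reconstruction matches the proof given there, where the key points are exactly as you identify: the Codazzi identity to pass from $\Delta_\Sigma\langle y,\nu\rangle$ to $\langle y^T,\nabla H\rangle - |A|^2\langle y,\nu\rangle$, the self-shrinker equation $H=\tfrac12\langle x,\nu\rangle$ to rewrite $\nabla H$, and the symmetry of $A$ to cancel the drift term in $\mathcal{L}$, with the extra $\tfrac12 H$ from $D_{E_i}x=E_i$ producing the shift from eigenvalue $\tfrac12$ to $1$.
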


For hypersurfaces $M^{n-1}$ in $\mathbf{S}^n$, we will consider the usual stability operator for area given by (recalling that $\mathbf{S}^n$ has constant Ricci curvature $n-1$) \begin{equation} \wt{L} = \Lap_M + |\wt{A}|^2 + (n-1). \end{equation} Here $\wt{A}$ is the second fundamental form of $M$ in $\mathbf{S}^n$. As above we have the Dirichlet spectrum $\{\kappa_i(\Omega)\}_{i\geq 1}$ for any domain $\Omega \ssubset  M$, and we define the first stability eigenvalue of $M$ by \begin{equation}\kappa_1(M) = \inf_{\Omega} \kappa_1(\Omega) = \inf_f \frac{\int_M \left(|\nabla^M f|^2-|\wt{A}|^2f^2-(n-1) f^2\right)}{\int_M f^2}.\end{equation}
Again the infimum may be taken over Lipschitz functions $f$ with compact support in $M$, although it could again be $-\infty$. If $W$ is an orientable stationary integral $(n-1)$-varifold in $\mathbf{S}^n$, we set $\kappa_1(W)=\kappa_1(\reg W)$. 

\subsection{Entropy of $F$-stationary varifolds}

Colding-Minicozzi showed that the entropy of a smooth self-shrinker is achieved by the $F=F_{0,1}$ functional. Ketover and Zhou \cite[Lemma 10.4]{ketover2015entropy} extended their computation to the singular setting (in fact for more general varifolds): 

\begin{lemma}[\cite{ketover2015entropy}]
\label{lem:entropymax}
Let $V$ be an $F$-stationary varifold satisfying $F(V)<\infty$. Fix $a\in\mathbf{R}$ and $y\in\mathbf{R}^{n+1}$ and set $g(s) = F_{sy,1+as^2}(V)$. Then for all $s>0$ with $1+as^2>0$ we have \begin{equation}g'(s) = -\frac{1}{2(1+as^2)}\int \frac{|(asx+y)^\perp|^2s}{1+as^2}\rho_{sy,1+as^2} \d\mu_V(x) \leq 0.\end{equation} Consequently the map $(x_0,t_0)\mapsto F_{x_0,t_0}(V)$ achieves its global maximum at $(0,1)$, that is, $\Lambda(V)=F(V)<\infty$. 
\end{lemma}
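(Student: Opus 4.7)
The proof is a direct computation using the first variation, combined with careful algebraic bookkeeping. The strategy is to differentiate $g(s)$ under the integral sign, expand the resulting derivative into tangential and normal components relative to $V$, and apply $F$-stationarity with two natural test vector fields to trade the tangential integrals for normal ones, ultimately recognising the sum as a perfect square of $(asx+y)^\perp$.

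Set $x_0 = sy$ and $t_0 = 1+as^2$. Differentiating $\log \rho_{x_0,t_0}(x) = -\tfrac{n}{2}\log(4\pi t_0) - |x-x_0|^2/(4t_0)$ in $s$ with $x$ fixed yields
\[\pr_s \log\rho = -\frac{nas}{t_0} + \frac{\langle x-x_0,y\rangle}{2t_0} + \frac{as|x-x_0|^2}{2t_0^2}.\]
Decomposing $x-x_0$ and $y$ into tangential and normal parts at each $x\in \reg V$, the expression $g'(s) = \int \pr_s\rho\,\d\mu_V$ involves integrals of $|(x-x_0)^T|^2\rho$, $\langle(x-x_0)^T,y^T\rangle\rho$, and their normal analogues. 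To eliminate the tangential terms, I apply the $F$-stationarity identity
\[\int \div_V X\,\d\mu_V = \tfrac{1}{2}\int \langle X^\perp, x\rangle\,\d\mu_V\]
with the test vector fields $X = \rho(x-x_0)$ and $X = \rho y$. Using $\div_V(x-x_0)=n$, $\nabla^V\rho = -\rho(x-x_0)^T/(2t_0)$, and $x^\perp = (x-x_0)^\perp + sy^\perp$, these produce identities expressing $\int|(x-x_0)^T|^2\rho/(2t_0)\,\d\mu_V$ and $\int \langle(x-x_0)^T,y^T\rangle\rho/(2t_0)\,\d\mu_V$ in terms of $\int\rho$, $\int|(x-x_0)^\perp|^2\rho$, $\int\langle(x-x_0)^\perp,y^\perp\rangle\rho$, and $\int|y^\perp|^2\rho$. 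Since the chosen test fields are not compactly supported, one truncates by a cutoff $\phi_R$ supported in $B_{2R}$; the hypothesis $F(V)<\infty$ yields Euclidean volume growth (Lemma~\ref{lem:volumegrowth}), which ensures that the contributions from $|\nabla \phi_R|$ vanish as $R\to\infty$.

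Substituting the two identities into the formula for $g'(s)$ produces cancellation of the $\pm nas/t_0\int\rho$ terms, and tallying the remaining coefficients yields
\[g'(s) = -\frac{a^2s^3}{2t_0^2}\int|(x-x_0)^\perp|^2\rho - \frac{as^2}{t_0}\int\langle(x-x_0)^\perp,y^\perp\rangle\rho - \frac{s}{2}\int|y^\perp|^2\rho.\]
Since $asx+y = as(x-x_0) + t_0 y$, one has $|(asx+y)^\perp|^2 = a^2s^2|(x-x_0)^\perp|^2 + 2ast_0\langle(x-x_0)^\perp,y^\perp\rangle + t_0^2|y^\perp|^2$, so the displayed expression equals $-\tfrac{s}{2t_0^2}\int|(asx+y)^\perp|^2\rho\,\d\mu_V$, matching the claimed formula and manifestly non-positive for $s>0$. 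For the second assertion, given any $(x_0,t_0)$ with $t_0>0$ I set $y=x_0$ and $a=t_0-1$; then $s\mapsto (sy, 1+as^2)$ connects $(0,1)$ to $(x_0,t_0)$ for $s\in[0,1]$, and $1+as^2 = 1+(t_0-1)s^2 \geq \min(1,t_0) > 0$ throughout, so the monotonicity just proved gives $F_{x_0,t_0}(V) \leq F(V)$. Taking the supremum over $(x_0,t_0)$ yields $\Lambda(V) = F(V) < \infty$.

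The main obstacle is the algebraic identification of the correct linear combination of $F$-stationarity identities that produces the perfect square of $(asx+y)^\perp$; the cutoff approximation justifying use of the non-compactly supported weight $\rho$ is routine given the ambient volume growth.
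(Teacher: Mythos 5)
Your computation is correct and reproduces in detail the argument of Ketover--Zhou, which the paper cites rather than reproves; the choice of test fields $\rho(x-x_0)$ and $\rho y$, the cancellation of the $\pm nas/t_0\int\rho$ terms, and the observation that $asx+y = t_0y + as(x-x_0)$ lets the three surviving normal integrals reassemble into the claimed perfect square are exactly the right moves, and I verified the arithmetic. The closing argument (connect $(0,1)$ to $(x_0,t_0)$ by the path with $y=x_0$, $a=t_0-1$, $s\in[0,1]$, along which $1+as^2$ is monotone and bounded away from $0$) is also the standard and correct way to deduce $\Lambda(V)=F(V)$.

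One caveat worth flagging: to dispose of the cutoff boundary terms you appeal to Lemma~\ref{lem:volumegrowth}, but that lemma takes $\Lambda(V)<\infty$ as a \emph{hypothesis}, whereas $\Lambda(V)<\infty$ is precisely the \emph{conclusion} of the present lemma, and the only standing assumption is $F(V)<\infty$. As written this is circular. The gap is easily repaired --- for instance, one can first establish polynomial volume growth directly for an $F$-stationary varifold with $F(V)<\infty$ using the self-shrinker relation $|\vec{H}|\le\tfrac12|x|$ and the monotonicity formula with generalized mean curvature, or one can bootstrap along the path $s\in[0,1]$ (on which $t_0$ stays in the compact interval between $1$ and $t_0^*$, so the finiteness of $g(s)$ and of the needed polynomial moments propagates) --- but as stated the justification of the cutoff step invokes a result whose hypotheses are not yet available.
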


As a result we see that stationary cones have finite entropy:

\begin{corollary}
\label{cor:coneentropy}
Let $V=C(W)$ be a stationary $n$-cone. Then $V$ has finite entropy given by $\Lambda(V) = \frac{\|W\|}{\vol(\mathbf{S}^{n-1})},$ where $\mathbf{S}^{n-1}$ is the totally geodesic equator of $\mathbf{S}^n$ and $\|W\|$ is the total mass of the link $W$. 
\end{corollary}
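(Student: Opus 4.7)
The plan is to compute $F(V) = F_{0,1}(V)$ directly by exploiting the conical structure, and then invoke Lemma \ref{lem:entropymax} to conclude that this common value equals the entropy.

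First I would disintegrate the mass measure of the cone in polar coordinates. Since $V = C(W)$ is the cone over the integer rectifiable $(n-1)$-varifold $W$ in $\mathbf{S}^n$, for any nonnegative Borel function $\varphi$ on $\mathbf{R}^{n+1}$ we have
\begin{equation}
\int \varphi(x)\d\mu_V(x) = \int_0^\infty \int \varphi(r\omega)\,r^{n-1}\d\|W\|(\omega)\d r.
\end{equation}
Note that $\|W\|<\infty$ automatically, since integer rectifiable varifolds have locally finite mass and $\mathbf{S}^n$ is compact. Applying this with $\varphi(x) = (4\pi)^{-n/2}e^{-|x|^2/4}$, and noting that $|r\omega|=r$ on the cone, yields
\begin{equation}
F(V) = (4\pi)^{-n/2}\|W\|\int_0^\infty e^{-r^2/4}r^{n-1}\d r.
\end{equation}

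Next I would evaluate the elementary Gaussian integral via the substitution $u = r^2/4$, giving $\int_0^\infty e^{-r^2/4}r^{n-1}\d r = 2^{n-1}\Gamma(n/2)$. Combining with the standard identity $\vol(\mathbf{S}^{n-1}) = 2\pi^{n/2}/\Gamma(n/2)$ produces
\begin{equation}
F(V) = \frac{\Gamma(n/2)}{2\pi^{n/2}}\|W\| = \frac{\|W\|}{\vol(\mathbf{S}^{n-1})}.
\end{equation}
In particular $F(V) < \infty$. Since $V$ is $F$-stationary, Lemma \ref{lem:entropymax} then applies to give $\Lambda(V) = F(V)$, completing the proof.

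There is no real obstacle here: the argument is essentially a computation. The only point requiring a modicum of care is justifying the polar disintegration for an integer rectifiable cone rather than a smooth one, but this is built into the definition of $C(W)$ as the cone over the link, together with the fact that integer rectifiable varifolds are defined by their mass measures.
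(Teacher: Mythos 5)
Your proof is correct and follows essentially the same route as the paper: compute $F_{0,1}(V)$ via polar coordinates and then invoke Lemma \ref{lem:entropymax} to identify this with the entropy. The only difference is that you spell out the elementary Gaussian integral and the normalization by $\vol(\mathbf{S}^{n-1})$, which the paper leaves as a "straightforward calculation."
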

\begin{proof}
A straightforward calculation in polar coordinates gives that $F_{0,1}(V)=\frac{\|W\|}{\vol(\mathbf{S}^{n-1})}<\infty$, and the result then follows from Lemma \ref{lem:entropymax}. 
\end{proof}

We can characterise the equality case in Lemma \ref{lem:entropymax} as follows:

\begin{lemma}
\label{lem:simon}Let $V$ be an $F$-stationary varifold in $\mathbf{R}^{n+1}$. 
\begin{enumerate}
\item If $x^\perp=0$ a.e. on $V$ where $x$ is the position vector, then $\Sigma$ is a stationary cone. 
\item If $y^\perp=0$ a.e. on $V$ for some fixed vector $y$, then $\Sigma$ splits off a line. 
\end{enumerate}
\end{lemma}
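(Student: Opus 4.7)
Both statements assert invariance of $V$ under a one-parameter family of diffeomorphisms of $\mathbf{R}^{n+1}$: dilations about the origin in (1), translations by $sy$ in (2). In each case the hypothesis is exactly that the infinitesimal generator (the position field $X(x)=x$ in (1), the constant field $X(x)=y$ in (2)) is tangent to $V$ at $\mu_V$-almost every point. I treat the two parts separately, exploiting in each case a monotonicity that follows from the self-shrinker equation.

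For (1), the hypothesis $x^\perp = 0$ $\mu_V$-a.e. combined with $\vec H = -\tfrac{1}{2} x^\perp$ gives $\vec H = 0$ a.e., so $V$ is stationary for area (not merely for the Gaussian area). The classical Simon monotonicity formula for stationary integral $n$-varifolds in $\mathbf{R}^{n+1}$ reads
\begin{equation*}
\frac{\mu_V(B_R)}{R^n} - \frac{\mu_V(B_r)}{r^n} = \int_{B_R \setminus B_r} \frac{|x^\perp|^2}{|x|^{n+2}}\, d\mu_V
\end{equation*}
for $0 < r < R$. Since the integrand vanishes identically on the support, the density ratio is constant in $r$, and the standard equality case then forces $V$ to be a cone with vertex at the origin. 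Stationarity of this cone follows from the stationarity of $V$.

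For (2), I first apply Lemma \ref{lem:entropymax} with $a = 0$ and the given $y$; the vanishing of $y^\perp$ on $V$ gives
\begin{equation*}
\frac{d}{ds} F_{sy, 1}(V) = -\frac{s}{2} \int |y^\perp|^2 \rho_{sy, 1}\, d\mu_V \equiv 0,
\end{equation*}
so that $F$ is constant along the translated family $V_s$ obtained by translating $V$ by $sy$; one checks directly that each $V_s$ remains $F$-stationary, since the condition $y^\perp = 0$ is preserved under such translations. To upgrade this to $V_s = V$, I work on the regular part: by continuity of the tangent plane, $y$ is tangent to $\reg V$ at every point, and ordinary ODE integration of the constant field $y$ yields $\reg V + sy = \reg V$ for all $s \in \mathbf{R}$. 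The constancy theorem for $F$-stationary varifolds ensures that the multiplicity is locally constant on $\reg V$, and since each component of $\reg V$ is invariant under the flow, the multiplicity function is translation-invariant in the $y$ direction. Since $\mu_V$ is concentrated on $\reg V$, the measure itself is invariant under translation by $sy$, so $V$ splits off the line spanned by $y$.

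\textbf{Main obstacle.} The arguments rest on well-known tools — Simon's monotonicity for (1) and the ODE flow of a tangent vector field for (2) — but the chief difficulty is propagating set-theoretic invariance of $\reg V$ to full varifold invariance, since no structural assumption on $\sing V$ is imposed in the statement of this lemma. Lemma \ref{lem:entropymax} is a useful consistency check in (2), but it is the constancy theorem together with an accounting of multiplicities on each component of $\reg V$ that makes the argument rigorous.
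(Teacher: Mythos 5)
Your part (1) matches the paper's proof exactly: vanishing of $x^\perp$ gives $\vec H=0$, hence stationarity, and Simon's monotonicity formula (indeed the paper cites \cite[Theorem 19.3]{simon}) forces $V$ to be a cone. No issues there.

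Your part (2) takes a genuinely different route, and it contains a gap. The paper works entirely at the level of the measure $\mu_V$: it tests the first variation formula (\ref{eq:genmeancurv}) against vector fields of the form $X = f(x')\phi(x_{n+1}+s)e_{n+1}$, uses the hypothesis $e_{n+1}^\perp = 0$ a.e.\ to show that $\int f(x')\phi(x_{n+1}+s)\,\d\mu_V$ is independent of $s$, and then passes via the slicing theorem and coarea formula to invariance of the slices $V\mr\{x_{n+1}=s\}$, hence of $\mu_V$ itself. This deduction never leaves the measure-theoretic setting and makes no use of the regular part. Your argument instead flows the constant field $y$ on $\reg V$, but this has two problems. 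First, the assertion ``ordinary ODE integration of the constant field $y$ yields $\reg V + sy = \reg V$ for all $s$'' is not valid: the maximal integral curve of $y|_{\reg V}$ through $p\in\reg V$ is $p+ty$, but it is only guaranteed to remain in $\reg V$ for small $|t|$; as $t$ grows the curve may approach $\sing V$, at which point the flow on $\reg V$ terminates. Nothing in the hypotheses of the lemma rules this out (the lemma imposes no condition whatsoever on $\sing V$). Second, the step ``Since $\mu_V$ is concentrated on $\reg V$'' is also unjustified at this level of generality: $\reg V$ is defined as the locus where $\supp V$ is locally a $C^2$ hypersurface, and without any structural hypothesis on the singular set it is not automatic that $\mathcal{H}^n(\sing V)=0$. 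Both difficulties are precisely what the paper's slicing argument is designed to avoid, since it never needs to track points along the flow or separate $\reg V$ from $\sing V$. Your use of Lemma~\ref{lem:entropymax} is a nice consistency check (constancy of $F_{sy,1}(V)$) but, as you note, it only constrains the Gaussian weight and does not by itself yield invariance of $\mu_V$.
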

\begin{proof}
For point (1) suppose $x^\perp=0$ a.e. on $V$. Then the generalised mean curvature of $V$ is $\vec{H} = -\frac{1}{2}x^\perp =0$, so $V$ is stationary for the area functional. The fact that $V$ is now a cone follows from the monotonicity formula as detailed in the proof of \cite[Theorem 19.3]{simon}. We will not reproduce it here as it is similar to the proof of the second case to follow.

For point (2) suppose $y^\perp=0$ a.e. on $V$. Without loss of generality we may assume $y= e_{n+1}$. We therefore write $x=(x',x_{n+1})$, where $x'\in\mathbf{R}^n$. By the slicing theorem, the slices $V\mr \{x_{n+1} =s\}$ are integral $(n-1)$-varifolds for almost every $s\in\mathbf{R}$. 

Let $\phi:\mathbf{R}\rightarrow \mathbf{R}$ and $f:\mathbf{R}^n\rightarrow \mathbf{R}$ be $C^1$, compactly supported functions. We set \begin{equation}g(s) = \int f(x') \phi( x_{n+1} +s) \d\mu_V(x),\end{equation} so that $g'(s) = \int f(x') \phi'(x_{n+1} +s)\d\mu_V(x).$ 

Consider the vector field $X= f(x')\phi(x_{n+1}+s) e_{n+1}.$ We calculate \begin{equation}\div_V X = \phi(x_{n+1}+s)\langle \nabla f, e_{n+1}\rangle + f(x) \phi'(x_{n+1}+s) \langle e_{n+1}^T, e_{n+1}\rangle\end{equation} Since $e_{n+1}=e_{n+1}^T$ a.e. on $\Sigma$, we have that $\langle \nabla f,e_{n+1}\rangle = \langle Df,e_{n+1}\rangle = 0$, $\langle e_{n+1}^T,e_{n+1}\rangle = 1$ and $\langle x^\perp,e_{n+1}\rangle = 0$. Since $\vec{H}=-\frac{1}{2}x^\perp$, plugging into (\ref{eq:genmeancurv}) then gives that $g'(s)\equiv 0$, hence $g(s)$ is constant in $s$. 

Now fix $a>0$. Using $\phi$ to approximate the characteristic function of the interval $[0,a]$, our work above shows that $\int f(x') \chi_{\{s\leq x_{n+1}\leq s+a\}} \d\mu_V(x)$ is constant in $s$, for any compactly supported $C^1$ function $f$ on $\mathbf{R}^n$. Set $V^s = V \mr \{x_{n+1}=s\}$. For almost every $s\in\mathbf{R}$, both slices $V^s$ and $V^{s+a}$ are integer rectifiable, so using the coarea formula and differentiating gives that \begin{equation}\int f(x') \d\mu_{V^s}(x) = \int f(x')\d\mu_{V^{s+a}}(x)\end{equation} for all such $s$. Another application of the coarea formula then gives that $\mu_V$ is invariant under translation by $ae_{n+1}$. Since $a$ was arbitrary, this concludes the proof.  
\end{proof}


In particular, the map $(x_0,t_0)\mapsto F_{x_0,t_0}(V)$ has a strict global maximum at $(0,1)$ for $F$-stationary varifolds $V$ that do not split off a line and are not cones. Similarly the map $x_0 \mapsto F_{x_0,1}(V)$ has a strict global maximum at $x_0=0$ if $V$ does not split off a line. 

\section{Applications}
\label{sec:applications}

Before proving Theorem \ref{thm:entropyclassintro}, we will describe how entropy lower bounds for closed hypersurfaces and for singular self-shrinkers can be deduced from the classification of compact entropy-stable singular self-shrinkers. In particular, we will assume for this section that the following holds:

\begin{proposition}
\label{prop:entstablecompact}
Let $V$ be an $F$-stationary integral $n$-varifold in $\mathbf{R}^{n+1}$, which has orientable regular part of multiplicity 1, finite entropy and $\mathcal{H}^{n-1}(\sing V)=0$. If $V$ is not the round sphere $\mathbf{S}^n(\sqrt{2n})$ then there is an entropy-unstable variation of $V$, which is compactly supported away from $\sing V$. 
\end{proposition}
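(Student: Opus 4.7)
The plan is to obtain this proposition as a direct corollary of Theorem \ref{thm:entropyclassintro}, with compactness of $V$ implicit in the statement (as signalled by the label and by the preceding reference to the classification of \emph{compact} entropy-stable singular self-shrinkers; indeed, without compactness a multiplicity-one cylinder $\mathbf{S}^{n-1}(\sqrt{2(n-1)}) \times \mathbf{R}$ would be a counterexample, since such cylinders are known to be entropy-stable). First I would check the hypotheses of Theorem \ref{thm:entropyclassintro}: the assumption $\mathcal{H}^{n-1}(\sing V) = 0$ implies Wickramasekera's $\alpha$-structural hypothesis for every $\alpha \in (0, \tfrac{1}{2})$, as recorded in Section \ref{sec:notation}; the hypotheses of finite entropy and orientability of $\reg V$ transfer directly.

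Next I would rule out the elementary symmetry cases. Compactness of $V$ forces $V$ neither to split off a line (any translation orbit $\{p + te : t \in \mathbf{R}\} \subset \supp V$ would be unbounded) nor to be a stationary cone (dilating about the origin would again produce an unbounded support unless $V$ is trivial). Consequently the ``furthermore'' clause of Theorem \ref{thm:entropyclassintro} is available, so any unstable variation it produces may be taken to have compact support away from $\sing V$, as required. The multiplicity-one hypothesis in the proposition is not needed for Theorem \ref{thm:entropyclassintro} itself, but is compatible with and natural for the setting.

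Finally I would observe that among the generalised cylinders $\mathbf{S}^k(\sqrt{2k}) \times \mathbf{R}^{n-k}$, only the case $k = n$ — the round sphere $\mathbf{S}^n(\sqrt{2n})$ — is compact, and this is excluded by hypothesis. Thus $V$ is not a generalised cylinder, and Theorem \ref{thm:entropyclassintro} yields the desired entropy-unstable variation of compact support away from $\sing V$.

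There is no substantial technical obstacle here: the proof is purely a bookkeeping reduction to the main classification theorem. The only point worth spelling out in the write-up is that compactness is doing the heavy lifting throughout — it simultaneously rules out splitting, cone structure, and every generalised cylinder other than $\mathbf{S}^n(\sqrt{2n})$, which is exactly what is needed to collapse the three alternatives in Theorem \ref{thm:entropyclassintro} onto the single case at hand.
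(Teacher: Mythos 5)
Your proposal is correct and takes the same route as the paper, which reduces the proposition to Theorem \ref{thm:entropyclassintro} in a single remark: compact support rules out splitting off a line and being a cone, so the ``furthermore'' clause applies, and the round sphere is the unique compact generalised cylinder $\mathbf{S}^k(\sqrt{2k})\times\mathbf{R}^{n-k}$. You also correctly flag that compact support is an implicit hypothesis here (the label, the surrounding text, and the subsequent applications to $\mathcal{CSV}_n$ all presuppose it), and rightly observe that without it the non-compact round cylinders would be counterexamples.
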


Clearly Proposition \ref{prop:entstablecompact} is an immediate corollary of Theorem \ref{thm:entropyclassintro} (see also Theorem \ref{thm:entropyclass}), since a compactly supported varifold certainly cannot split off a line or be a cone. The main goal of this section will be to prove Theorem \ref{thm:closedentropyintro} under this assumption. The applications we present here extend the results of Bernstein-Wang to all higher dimensions, and depend crucially on their theory developed in \cite{BW}. 

Let $\Lambda_n=\Lambda(\mathbf{S}^n)$ be the entropy of the round sphere. A direct computation (see \cite{stone}) shows \begin{equation}\label{eq:sphentropies} 2 > \Lambda _1 >\frac{3}{2} > \Lambda_2  > \cdots >\Lambda_n > \cdots >1.\end{equation} 

Similar to \cite{BW} we define $\mathcal{SV}_n$ to be the set of all integral $F$-stationary $n$-varifolds in $\mathbf{R}^{n+1}$ with nonempty support. We denote by $\mathcal{CSV}_n$ the subset of varifolds in $\mathcal{SV}_n$ that have compact support. 
For $\Lambda>0$ we also define $\mathcal{SV}_n(\Lambda)$ to be the subset of varifolds in $\mathcal{SV}_n$ with entropy strictly less than $\Lambda$, and $\mathcal{CSV}_n(\Lambda) = \mathcal{SV}_n(\Lambda)\cap \mathcal{CSV}_n$. 

\subsection{Entropy lower bound for closed hypersurfaces}

In \cite{CIMW}, Colding-Ilmanen-Minicozzi-White showed that the shrinking sphere $\mathbf{S}^n(\sqrt{2n})$ minimises entropy amongst smooth, embedded closed self-shrinkers (in fact, they showed that there is a gap to the next lowest entropy in this class). This led them to conjecture the following:

\begin{conjecture}[\cite{CIMW}]
\label{conj:cimw}
Any smoothly embedded, closed hypersurface $\Sigma^n\subset\mathbf{R}^{n+1}$, $n\leq6$ has entropy $\Lambda(\Sigma)\geq \Lambda_n$. \end{conjecture}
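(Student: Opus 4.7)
The plan is to argue by contradiction via a weak-flow extinction argument in the style of Bernstein--Wang \cite{BW}, using Proposition \ref{prop:entstablecompact} to handle compact singular self-shrinkers in all dimensions. Suppose for contradiction that there were a smooth closed embedded hypersurface $\Sigma^n$ with $\Lambda(\Sigma)<\Lambda_n$. Define
\[
\Lambda_\dagger=\inf\bigl\{\Lambda(V):V\in\mathcal{CSV}_n \text{ arises as an extinction-time blow-up of a unit-regular, cyclic mod } 2 \text{ Brakke flow from some smooth closed embedded hypersurface}\bigr\}.
\]
Since $\Sigma$ itself admits such a Brakke flow (by the Bernstein--Wang construction) which must become extinct in finite time, blowing up at the final extinction point produces a nontrivial element of $\mathcal{CSV}_n$; Huisken monotonicity of entropy along the weak flow then forces $\Lambda_\dagger\le\Lambda(\Sigma)<\Lambda_n$.

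Next I would realize $\Lambda_\dagger$ by passing a minimizing sequence $V_k$ to a limit. Finite entropy and Lemma \ref{lem:volumegrowth} provide uniform Euclidean volume bounds, so by standard varifold compactness $V_k$ subconverges weakly to a compact $F$-stationary varifold $V_\dagger$ with $\Lambda(V_\dagger)\le\Lambda_\dagger$. Persistence of the unit-regular and cyclic mod $2$ structure under Ilmanen compactness, combined with Wickramasekera's regularity theory \cite{wick}, ensures that $V_\dagger$ has orientable multiplicity-one regular part with $\mathcal{H}^{n-1}(\sing V_\dagger)=0$; in particular the hypotheses of Proposition \ref{prop:entstablecompact} are satisfied.

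The heart of the argument is then to show that $V_\dagger$ is entropy-stable, whence Proposition \ref{prop:entstablecompact} forces $V_\dagger=\mathbf{S}^n(\sqrt{2n})$, $\Lambda_\dagger=\Lambda_n$, contradicting $\Lambda_\dagger<\Lambda_n$. Suppose instead $V_\dagger$ is entropy-unstable: there is a normal variation $V_\dagger^s$ compactly supported away from $\sing V_\dagger$ with $\Lambda(V_\dagger^s)<\Lambda(V_\dagger)$ for small $s>0$. I would realize this variation as a perturbation of a smooth closed embedded hypersurface $\wt\Sigma$ obtained by mollifying $V_\dagger$ in a neighbourhood of its regular part containing the support of the variation, well away from $\sing V_\dagger$; running the weak flow from $\wt\Sigma$ then produces, by entropy monotonicity, an extinction-time blow-up of entropy strictly below $\Lambda_\dagger$, contradicting the definition of $\Lambda_\dagger$.

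The main obstacle is this last realization step: transferring an abstract entropy-decreasing variation of a singular varifold $V_\dagger$ into an honest smooth closed hypersurface whose weak flow exhibits an extinction singularity of strictly smaller entropy. This requires (i) a controlled smoothing of $V_\dagger$ supported in a neighbourhood of the compactly supported variation but localized away from $\sing V_\dagger$; (ii) a quantitative bound showing the smoothing error in entropy is dominated by the gain from the unstable variation; and (iii) verification that the extinction tangent of the perturbed weak flow again lies in the admissible class. Most of this machinery is developed in \cite{BW}; the obstruction which previously restricted their argument to $n\le 6$ was precisely the absence of a classification of entropy-stable compact singular self-shrinkers, which Proposition \ref{prop:entstablecompact} now supplies.
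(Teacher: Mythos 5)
Your high-level strategy — combine the Bernstein--Wang weak-flow machinery with Proposition \ref{prop:entstablecompact} — is the same as the paper's, and you correctly identify Proposition \ref{prop:entstablecompact} as the new ingredient removing the dimensional restriction in \cite{BW}. But there is a genuine gap: you assert without justification that ``blowing up at the final extinction point produces a nontrivial element of $\mathcal{CSV}_n$,'' i.e.\ that the extinction-time tangent is a \emph{compact} self-shrinker. This is not automatic. The tangent flow at the final vanishing point of a compact weak flow can a priori be noncompact (e.g.\ cylindrical, or a product $\wt{V}\times\mathbf{R}^{n-k}$ with $\wt{V}$ a lower-dimensional singular shrinker of low entropy), and the entire content of Bernstein--Wang's minimizer/regularity lemma (Lemma \ref{lem:bwmin}, cited rather than re-proved in the paper) is that the extinction tangent is a compact boundary measure \emph{provided} $\mathcal{CSV}_k(\Lambda_n)=\emptyset$ for all $1\le k\le n-1$. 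Your argument never supplies that hypothesis, so your minimizer $V_\dagger$ may fail to lie in the class to which Proposition \ref{prop:entstablecompact} applies.

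This is precisely why the paper proceeds by induction on dimension: Theorem \ref{thm:csmall} assumes $\mathcal{CSV}_k(\Lambda_k)=\emptyset$ for $2\le k\le n-1$ (base cases $2\le n\le 6$ from \cite{BW}, with $\mathcal{CSV}_1$ handled via \cite[Theorem 0.7]{CIMW}), uses this and the strict monotonicity $\Lambda_1>\Lambda_2>\cdots$ to verify the hypotheses of Lemma \ref{lem:bwmin}, extracts an entropy-stable, multiplicity-one compact boundary measure in $\mathcal{CSV}_n(\Lambda_n)$ with small singular set, and \emph{only then} invokes Proposition \ref{prop:entstablecompact} to reach a contradiction. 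The deduction of Theorem \ref{thm:closedentropyintro} for $n\ge 2$ then passes through Proposition \ref{prop:bwfinal} and Corollary \ref{cor:closedentropy}. Separately, the ``heart of the argument'' you describe — mollifying $V_\dagger$, perturbing by the entropy-decreasing variation, and re-running the weak flow — is really a re-derivation of item (3) of Lemma \ref{lem:bwmin}; the paper does not reprove this, and you should simply cite it rather than re-enter the delicate smoothing/perturbation analysis you flag as the main obstacle. The essential fix to your proposal is to set up the dimensional induction so that lower-dimensional low-entropy singular shrinkers are excluded before you try to extract a compact entropy-minimizing tangent.
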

 
 The case $n=1$ is an easy consequence of the Gage-Hamilton-Grayson theorem, which states that any embedded closed curve contracts to a round point. Bernstein and Wang \cite{BW} settled Conjecture \ref{conj:cimw} for $2\leq n\leq 6$ by leveraging their insightful observation that under a carefully chosen weak flow, the final time singularity arising from compact initial data must be collapsed in a certain sense (see \cite[Definition 4.6]{BW} and \cite[Definition 4.9]{BW}). In fact, they were able to prove the entropy bound for objects of weaker regularity, the compact boundary measures defined as follows (see also \cite[Definition 2.10]{BW}):

\begin{definition}
Let $V$ be an integral $n$-varifold in $\mathbf{R}^{n+1}$. We call $V$ a compact boundary measure if there is a bounded open nonempty subset $E\subset \mathbf{R}^{n+1}$ of locally finite perimeter (that is, $\chi_E$ has locally bounded variation) such that $\supp \mu_V=\pr E$ and $\mu_V = |D\chi_E|$. 
\end{definition}

In this subsection we will extend their result \cite[Corollary 6.4]{BW} to all dimensions $n\geq 2$. 
 
We will first need Bernstein-Wang's characterisation of the entropy minimiser in $\mathcal{CSV}_n(\Lambda_n)$:

\begin{lemma}[\cite{BW}, Lemma 6.1]
\label{lem:bwmin}
Let $n\geq2$. If for all $1\leq k\leq n-1$, the set $\mathcal{CSV}_k(\Lambda_n)$ is empty, then either $\mathcal{CSV}_n(\Lambda_n)$ is also empty, or there is a $V \in \mathcal{CSV}_n(\Lambda_n)$ satisfying:
\begin{enumerate}
\item $\Lambda(V) = \inf \{\Lambda(\mu): \mu\in\mathcal{CSV}_n(\Lambda_n)\}$,
\item $V$ is a compact boundary measure,
\item $V$ is entropy stable, 
\item $\sing V$ has Hausdorff dimension at most $n-7$.
\end{enumerate}
\end{lemma}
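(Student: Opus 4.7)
The plan is a direct minimisation argument, using the inductive hypothesis $\mathcal{CSV}_k(\Lambda_n)=\emptyset$ for $k<n$ to force any subsequential limit to remain compact and nontrivial. Assume $\mathcal{CSV}_n(\Lambda_n)$ is nonempty; set $\Lambda_*=\inf\{\Lambda(V):V\in\mathcal{CSV}_n(\Lambda_n)\}<\Lambda_n$ and choose a minimising sequence $V_i$ of compact boundary measures with $\Lambda(V_i)\to\Lambda_*$. By Lemma \ref{lem:entropymax} and the translation/dilation invariance of the $F$-stationary condition, we may assume $\Lambda(V_i)=F(V_i)$; Lemma \ref{lem:volumegrowth} then gives uniform Euclidean volume growth, and the Caccioppoli sets $E_i$ with $\mu_{V_i}=|D\chi_{E_i}|$ inherit uniform BV bounds on compact sets. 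Varifold compactness for $F$-stationary integrals (via Huisken's monotonicity and Allard's theorem), combined with BV compactness of $\{E_i\}$, produces a subsequential limit $V_\infty$ with $\mu_{V_\infty}=|D\chi_{E_\infty}|$ for some $E_\infty$ of locally finite perimeter; $V_\infty$ is again $F$-stationary, and $\Lambda(V_\infty)\leq\Lambda_*$ by lower semicontinuity.

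The key step is to verify $V_\infty\in\mathcal{CSV}_n(\Lambda_n)$, i.e., that it is nonzero and compactly supported. Nontriviality follows because $F(V_i)=\Lambda_*$ together with exponential decay of $\rho$ forces $\mu_{V_i}(B_R)$ to be uniformly bounded below for some large $R$, while the Brakke-type gap cited in the excerpt bounds $\Lambda_*$ away from $1$. Compactness is the delicate point: if $\supp V_\infty$ were unbounded, a blow-down at infinity would yield an $F$-stationary integral varifold $C_\infty$ of entropy at most $\Lambda_*<\Lambda_n$ which, by Lemma \ref{lem:simon}, must either split off a line or be a nontrivial stationary cone. In the splitting case, iterating the product structure eventually produces a compact $F$-stationary $k$-varifold in some $\mathcal{CSV}_k(\Lambda_n)$ with $k<n$, contradicting the inductive hypothesis. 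The conical case is excluded by the boundary-measure structure: a nontrivial cone limit would arise only from bounded sets $E_i$ whose diameters blow up, but the strict gap $\Lambda(V_i)<\Lambda_n$ combined with uniform BV bounds on compacta prevents such mass loss to infinity.

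Given $V_\infty\in\mathcal{CSV}_n(\Lambda_n)$, property (1) is immediate, and (2) holds since compact boundary measures form a closed subclass under this convergence. For (3): if $V_\infty$ admitted a compactly supported entropy-decreasing normal variation in the sense of Definition \ref{def:entstable1}, a cut-and-perturb argument would yield a nearby compact boundary measure in $\mathcal{CSV}_n(\Lambda_n)$ of strictly smaller entropy, contradicting minimality of $\Lambda_*$. For (4), the bound $\dim_{\mathcal{H}}\sing V_\infty\leq n-7$ follows from Wickramasekera's regularity theorem and Federer-Almgren dimension reduction: entropy stability gives $F$-stability of $\reg V_\infty$, and the inductive exclusion of lower-dimensional compact $F$-stationary varifolds of small entropy forbids the tangent cones that would permit larger singular strata, yielding the sharp codimension $7$ estimate.

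The main obstacle is preserving compactness of the minimiser in the limit. Without additional control, minimising sequences could have diameters tending to infinity, producing cylindrical or conical limits outside $\mathcal{CSV}_n$. The induction hypothesis is tailored exactly to block cylindrical splitting via the absence of lower-dimensional compact minimisers of bounded entropy, while the compact boundary-measure condition supplies the BV compactness needed to preclude nontrivial blow-down cones; together these confine $V_\infty$ to a bounded, nontrivial integral varifold, after which the remaining properties follow from minimality and the standard regularity theory.
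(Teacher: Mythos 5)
The paper does not prove this lemma itself; it cites it directly from Bernstein--Wang \cite{BW}, so there is no argument in the paper to compare against. Judged on its own merits, your reconstruction has the right broad shape (minimising sequence, varifold compactness, exclusion of splitting via the inductive hypothesis, regularity for the minimiser), but the compactness step --- which you correctly identify as the crux --- contains genuine gaps.

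The invocation of Lemma \ref{lem:simon} is misplaced. That lemma only says: if $x^\perp = 0$ a.e.\ then $V$ is a stationary cone, and if $y^\perp = 0$ a.e.\ then $V$ splits off a line. It is a rigidity statement characterising equality in Lemma \ref{lem:entropymax}; it does \emph{not} assert that a blow-down at infinity of an $F$-stationary varifold must split off a line or be a cone. (The correct statement --- that the blow-down of a self-shrinker is a stationary cone --- follows from Huisken's monotonicity formula, not from Lemma \ref{lem:simon}.) Once that is fixed, the "splitting case'' still needs more: if a rescaled limit splits as $\mathbf{R}^j \times \widetilde V$, the cross-section $\widetilde V$ is not automatically a \emph{compact} $F$-stationary varifold of lower dimension, so you cannot directly invoke $\mathcal{CSV}_k(\Lambda_n) = \emptyset$. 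And the exclusion of the conical case ("uniform BV bounds on compacta prevents such mass loss to infinity'') does not actually prevent mass loss: local BV bounds on fixed compact sets are perfectly consistent with diameters of $E_i$ escaping to infinity. Without a genuine diameter bound for low-entropy compact $F$-stationary varifolds (derived from the inductive hypothesis), the argument that $V_\infty$ remains compact and nontrivial is not closed.

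There is a secondary imprecision in your item (4). The codimension-$7$ bound is indeed supplied by Wickramasekera's theory once one has stability (via entropy stability $\Rightarrow$ $F$-stability for a compact, non-splitting varifold, as in Theorem \ref{thm:entropyF}). But the claim that "the inductive exclusion of lower-dimensional compact $F$-stationary varifolds of small entropy forbids the tangent cones that would permit larger singular strata'' is not correct as stated: tangent cones at singular points are non-compact stationary cones, so the inductive hypothesis about $\mathcal{CSV}_k$ does not directly constrain them. The dimension bound comes from stability and the $\alpha$-structural hypothesis alone, not from the induction.
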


The following proposition is implicit in the proof of \cite[Corollary 6.4]{BW}:

\begin{proposition}[\cite{BW}]
\label{prop:bwfinal}
Consider $n\geq 2$ and let $V$ be a compact boundary measure in $\mathbf{R}^{n+1}$. If for all $2\leq k\leq n$, the set $\mathcal{CSV}_k(\Lambda_k)$ is empty, then $\Lambda(V)\geq \Lambda_n$. Moreover, if equality holds then, up to translations and dilations, $V$ is an entropy-stable member of $\mathcal{CSV}_n$.  
\end{proposition}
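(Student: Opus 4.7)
The plan is to adapt the Bernstein-Wang weak mean curvature flow strategy from \cite{BW}: writing $V = |D\chi_E|$ for some bounded open $E$ of locally finite perimeter, one runs their weak flow starting from $E$. Since $V$ has compact support, the flow becomes extinct at some finite time $T > 0$, and Huisken's monotonicity formula in the weak setting implies that the entropy is monotone non-increasing along the flow. Performing a parabolic rescaling about the final extinction point-time yields an $F$-stationary tangent flow $V_\infty$. The technical heart of the Bernstein-Wang framework is a structure theorem for collapsed and partially collapsed weak flows, whose upshot for us is that, after splitting off any Euclidean factors, this extinction tangent flow corresponds to a compact shrinker $V_\infty' \in \mathcal{CSV}_k$ for some $k$ with $2 \leq k \leq n$, with $\Lambda(V_\infty') = \Lambda(V_\infty)$.

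Given this structural input, the lower bound follows by elementary entropy comparison. The inequality (\ref{eq:sphentropies}) gives $\Lambda_k \geq \Lambda_n$ for every $k \leq n$, so the hypothesis $\mathcal{CSV}_k(\Lambda_k) = \emptyset$ for $2 \leq k \leq n$ forces $\Lambda(V_\infty') \geq \Lambda_k \geq \Lambda_n$. Combined with entropy monotonicity along the flow, this yields $\Lambda(V) \geq \Lambda(V_\infty) = \Lambda(V_\infty') \geq \Lambda_n$.

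For the equality case, suppose $\Lambda(V) = \Lambda_n$. Then equality propagates through every step of the chain above, forcing $k = n$ (since $\Lambda_k > \Lambda_n$ whenever $k < n$) and $\Lambda(V) = \Lambda(V_\infty)$. The rigidity in Huisken's monotonicity then implies that, up to translation and parabolic rescaling, $V$ itself coincides with its self-similar limit, so after such normalisation $V \in \mathcal{CSV}_n$ with $\Lambda(V) = \Lambda_n$. To obtain entropy-stability I argue by contradiction: given a normal variation $V_s$ with $\Lambda(V_s) < \Lambda_n$, compactly supported away from $\sing V$, observe that $V_s$ remains a compact boundary measure for small $s$, since it bounds the perturbed region $\Phi^X_s(E)$, which is still bounded and of locally finite perimeter (the deformation being a compactly supported $C^2$ perturbation away from the singular set). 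Applying the lower bound already established to $V_s$ gives $\Lambda(V_s) \geq \Lambda_n$, contradicting the choice of variation. Thus $V$ must be entropy-stable. The main obstacle in this argument is the invocation of the Bernstein-Wang weak flow theory together with its structural classification of extinction tangent flows; once that input is in hand, the remaining entropy-comparison and perturbation steps are essentially routine.
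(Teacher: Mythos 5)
The paper does not reproduce a proof of this proposition; it is quoted directly as being implicit in the proof of \cite[Corollary 6.4]{BW}, and the author's contribution here is Theorem~\ref{thm:csmall} (the vanishing of $\mathcal{CSV}_k(\Lambda_k)$ in all dimensions), not this black-box input. Your reconstruction of the Bernstein--Wang argument captures the right ideas: run their carefully constructed weak flow from the bounded region $E$, appeal to the fact that the extinction-time tangent flow must be partially collapsed (a compact boundary measure times a Euclidean factor), and conclude by Huisken monotonicity together with entropy comparison. The perturbation argument for entropy-stability --- that any compactly supported normal variation of a compact boundary measure remains a compact boundary measure and hence has entropy $\geq \Lambda_n$ --- is also correct and is essentially what Bernstein--Wang use.

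Two points deserve tightening. First, you assert that the compact factor $V_\infty'$ lies in $\mathcal{CSV}_k$ with $2 \leq k \leq n$, but there is no a priori reason to exclude $k = 1$ (for example a torus-shaped hypersurface in $\mathbf{R}^3$ extinguishes at a circle cross $\mathbf{R}$ singularity). The case $k = 1$ is harmless because $\mathcal{CSV}_1(\Lambda_n) = \emptyset$ automatically: by \cite[Proposition 4.2]{BW} any element of $\mathcal{CSV}_1(3/2)$ is smooth, hence a circle of entropy $\Lambda_1 > 3/2 > \Lambda_n$; this is exactly the observation used at the start of the proof of Theorem~\ref{thm:csmall}. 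Second, the Huisken rigidity step in the equality case is correct in spirit but skips the actual squeeze: one needs that the Gaussian density $\Theta(x_1, T; t)$ centred at the extinction point-time is bounded above by $\Lambda(V) = \Lambda_n$ at $t = 0$, converges to $\Lambda(V_\infty) = \Lambda_n$ as $t \uparrow T$, and is monotone non-increasing, which forces it to be constant and hence the flow to be backwardly self-similar from time $0$; only then does one obtain $V \in \mathcal{CSV}_n$ after renormalisation. Neither gap undermines the argument, but as stated the $k \geq 2$ restriction is an unjustified claim that you should repair by including $k=1$ with the above observation.
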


We are now ready to prove the main theorem of this section:
 
\begin{theorem}
\label{thm:csmall}
For all $n\geq 2$, we have $\mathcal{CSV}_n(\Lambda_n)=\emptyset$. \end{theorem}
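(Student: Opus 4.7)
The plan is to induct on $n$, where each step combines Bernstein-Wang's structural Lemma \ref{lem:bwmin} with the classification of compact entropy-stable singular self-shrinkers supplied by Proposition \ref{prop:entstablecompact}. The strictly decreasing sphere entropies $\Lambda_1>\Lambda_2>\cdots$ recorded in (\ref{eq:sphentropies}) provide the compatibility between consecutive steps: once we know $\mathcal{CSV}_k(\Lambda_k)=\emptyset$, the containment $\mathcal{CSV}_k(\Lambda_n)\subseteq \mathcal{CSV}_k(\Lambda_k)$ immediately gives $\mathcal{CSV}_k(\Lambda_n)=\emptyset$ for every $k<n$, which is precisely what Lemma \ref{lem:bwmin} demands at level $n$.

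For the base case $n=2$, the only lower-dimensional hypothesis is $\mathcal{CSV}_1(\Lambda_2)=\emptyset$; since $\Lambda_2<\Lambda_1$ this is contained in $\mathcal{CSV}_1(\Lambda_1)$, which is empty by the Gage-Hamilton-Grayson theorem together with Bernstein-Wang's treatment of singular compact $F$-stationary $1$-varifolds. For the inductive step, assume $\mathcal{CSV}_k(\Lambda_k)=\emptyset$ for all $2\leq k\leq n-1$. The compatibility observation above gives $\mathcal{CSV}_k(\Lambda_n)=\emptyset$ for all $1\leq k\leq n-1$, so Lemma \ref{lem:bwmin} applies: either its conclusion is that $\mathcal{CSV}_n(\Lambda_n)$ is already empty (and we are done), or we obtain a minimizer $V\in\mathcal{CSV}_n(\Lambda_n)$ which is a compact boundary measure, is entropy-stable, and has singular set of Hausdorff dimension at most $n-7$.

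The remaining task is to eliminate this minimizer $V$ by applying Proposition \ref{prop:entstablecompact} to it. I would verify its four hypotheses in turn: $V$ is $F$-stationary with finite entropy since $\Lambda(V)<\Lambda_n<\infty$; the boundary-measure representation $\mu_V=|D\chi_E|$ makes $\reg V$ of multiplicity one and orientable, via the outward unit normal of $E$; the Hausdorff dimension bound on $\sing V$ gives $\mathcal{H}^{n-1}(\sing V)=0$ since $n-7<n-1$; and $V$ is not the round sphere $\mathbf{S}^n(\sqrt{2n})$, simply because $\Lambda(V)<\Lambda_n=\Lambda(\mathbf{S}^n(\sqrt{2n}))$. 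Proposition \ref{prop:entstablecompact} then produces a compactly supported entropy-unstable variation of $V$, in direct contradiction with clause (3) of Lemma \ref{lem:bwmin}. Hence $\mathcal{CSV}_n(\Lambda_n)=\emptyset$, closing the induction.

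The genuine mathematical content sits inside Proposition \ref{prop:entstablecompact} itself; the obstacle in the present deduction is essentially organisational, namely checking that the Bernstein-Wang minimizer slots precisely into the hypotheses of that proposition and that the strict monotonicity of sphere entropies (\ref{eq:sphentropies}) is sharp enough to cascade through every level of the induction.
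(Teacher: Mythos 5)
Your proposal is correct and follows essentially the same strategy as the paper: induct on $n$, feed the inductive hypothesis plus the $n=1$ case into Lemma \ref{lem:bwmin}, and then rule out the resulting minimizer by verifying it satisfies the hypotheses of Proposition \ref{prop:entstablecompact}. One small inaccuracy worth flagging: the paper establishes only $\mathcal{CSV}_1(3/2)=\emptyset$ (via \cite[Proposition 4.2]{BW}, which gives smoothness below the threshold $3/2$, combined with \cite[Theorem 0.7]{CIMW} for the smooth compact $1$-dimensional case), and $3/2<\Lambda_1$, so your stronger claim that $\mathcal{CSV}_1(\Lambda_1)=\emptyset$ is not directly supported by the cited tools — but since all you need is $\mathcal{CSV}_1(\Lambda_n)\subseteq\mathcal{CSV}_1(3/2)$ for $n\geq 2$, the argument is unaffected.
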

\begin{proof}
First, any $V \in \mathcal{CSV}_1(3/2)$ must be smooth by \cite[Proposition 4.2]{BW} and hence have entropy at least $\Lambda_1$ by \cite[Theorem 0.7]{CIMW}. So by (\ref{eq:sphentropies}) we have $\mathcal{CSV}_1(\Lambda_n)=\emptyset$ for $n\geq 2$. 

We proceed by induction. By \cite[Proposition 6.2]{BW}, we already have $\mathcal{CSV}_n(\Lambda_n)=\emptyset$ for $2\leq n\leq 6$. Now for general $n\geq 2$, if $\mathcal{CSV}_k(\Lambda_k)=\emptyset$ for all $2\leq k\leq n-1$, then using the above discussion we see that the hypotheses of Lemma \ref{lem:bwmin} are satisfied. Thus, if $\mathcal{CSV}_n(\Lambda_n)$ is nonempty then there is a $V\in\mathcal{CSV}_n(\Lambda_n)$ that is entropy-stable and has singular set of codimension at least 7. Moreover, $V$ is a compact boundary measure so its regular part is orientable (see also \cite[Proposition 4.3]{BW}), and it has multiplicity 1 since it is integral with $\Lambda(\Sigma)<\Lambda_n<2$. But then Proposition \ref{prop:entstablecompact} gives that $V$ must be a round sphere, so in particular $\Lambda(V)=\Lambda_n$ which is a contradiction. 
\end{proof}

\begin{corollary}
\label{cor:closedentropy}
Let $n\geq 2$. Any compact boundary measure $V$ in $\mathbf{R}^{n+1}$ has entropy $\Lambda(V)\geq \Lambda(\mathbf{S}^n)$, with equality if and only if $V$ is a round sphere. 
\end{corollary}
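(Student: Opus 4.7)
The plan is to combine the three ingredients developed in this section: the Bernstein--Wang compactness/collapsing result (Proposition \ref{prop:bwfinal}), the emptiness of $\mathcal{CSV}_k(\Lambda_k)$ established in Theorem \ref{thm:csmall}, and the singular classification of entropy-stable compact self-shrinkers (Proposition \ref{prop:entstablecompact}).

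First, by Theorem \ref{thm:csmall} we know $\mathcal{CSV}_k(\Lambda_k)=\emptyset$ for every $k\geq 2$, and in particular for every $2\leq k\leq n$. Thus the hypothesis of Proposition \ref{prop:bwfinal} is met, and for any compact boundary measure $V$ in $\mathbf{R}^{n+1}$ we immediately obtain the desired lower bound $\Lambda(V)\geq \Lambda_n=\Lambda(\mathbf{S}^n)$.

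For the equality case, suppose $\Lambda(V)=\Lambda_n$. The equality statement of Proposition \ref{prop:bwfinal} asserts that, up to translation and dilation, $V$ is an entropy-stable member of $\mathcal{CSV}_n$. In particular, after this normalisation, $V$ is an $F$-stationary integral $n$-varifold in $\mathbf{R}^{n+1}$ with compact support and $\Lambda(V)=\Lambda_n<2$. Since $V$ arises from a compact boundary measure, its regular part inherits orientability from the boundary structure, and the entropy bound $\Lambda(V)<2$ forces multiplicity one on $\reg V$; moreover the singular set controls obtained in Lemma \ref{lem:bwmin} (via the same induction as in Theorem \ref{thm:csmall}) give $\mathcal{H}^{n-1}(\sing V)=0$. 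Hence $V$ satisfies all the hypotheses of Proposition \ref{prop:entstablecompact}, and entropy-stability forces $V$ to be the round sphere $\mathbf{S}^n(\sqrt{2n})$. Undoing the translation and dilation, the original $V$ is a round sphere.

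The two substantive steps have already been isolated into the results cited above, so conceptually there is no major obstacle remaining. The only delicate point is verifying that the equality minimiser produced by Proposition \ref{prop:bwfinal} genuinely satisfies the hypotheses needed to apply Proposition \ref{prop:entstablecompact}, namely orientability of $\reg V$, multiplicity one, and $\mathcal{H}^{n-1}(\sing V)=0$. These follow from the compact boundary measure structure (for orientability and the boundary interpretation), from the strict bound $\Lambda(V)<2$ together with integrality (for multiplicity one), and from the dimension bound $\dim \sing V \leq n-7$ coming from the Lemma \ref{lem:bwmin} part of the argument underlying Theorem \ref{thm:csmall} (which is far stronger than needed here).
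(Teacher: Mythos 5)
Your argument for the lower bound matches the paper exactly, and your treatment of orientability and multiplicity one in the equality case is also sound. However, there is a genuine gap in how you obtain the singular set control needed for Proposition \ref{prop:entstablecompact}.

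You assert that $\mathcal{H}^{n-1}(\sing V)=0$ (indeed $\dim\sing V\leq n-7$) follows from ``Lemma \ref{lem:bwmin} via the same induction as in Theorem \ref{thm:csmall}.'' This does not work, for two reasons. First, Lemma \ref{lem:bwmin} is a dichotomy: it produces a varifold with controlled singular set \emph{only in the case that} $\mathcal{CSV}_n(\Lambda_n)$ is nonempty, but Theorem \ref{thm:csmall} has already established that $\mathcal{CSV}_n(\Lambda_n)=\emptyset$, so the lemma outputs nothing here. Second, and more fundamentally, $\mathcal{CSV}_n(\Lambda_n)$ is by definition the set of compact $F$-stationary $n$-varifolds with entropy \emph{strictly less than} $\Lambda_n$; your equality-case $V$ has $\Lambda(V)=\Lambda_n$, so it does not belong to that class at all, and Lemma \ref{lem:bwmin} is silent about it. The correct route, as in the paper, is to apply \cite[Proposition 4.2]{BW} directly to the normalised $V$: since $\Lambda(V)=\Lambda_n<\tfrac{3}{2}$, that proposition gives $\mathcal{H}^{n-2}(\sing V)=0$, which is the bound you need. (Similarly, the paper cites \cite[Proposition 4.3]{BW} for orientability; your ``boundary structure implies orientable regular part'' is the same observation.) With that substitution the proof closes; as written, the singular-set step is misattributed to a lemma that does not apply.
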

\begin{proof}
The lower bound follows immediately from Theorem \ref{thm:csmall} and Proposition \ref{prop:bwfinal}.

If equality holds then, up to a translation and dilation, $V\in\mathcal{CSV}_n$ and $\Lambda(V)=\Lambda_n <\frac{3}{2}$, so as above $V$ is orientable by \cite[Proposition 4.3]{BW}, and $\mathcal{H}^{n-2}(\sing V)=0$ by \cite[Proposition 4.2]{BW}. Since $V$ must also be entropy-stable, by Proposition \ref{prop:entstablecompact} it must be a round sphere.
\end{proof}

Theorem \ref{thm:closedentropyintro} follows from Corollary \ref{cor:closedentropy} for $n\geq 2$, since any closed hypersurface separates $\mathbf{R}^{n+1}$ and hence defines a compact boundary measure. Again the case $n=1$ follows from the Gage-Hamilton-Grayson theorem \cite{gagehamilton, grayson}. 

\subsection{Gap theorem for compact singular self-shrinkers}

The main theorem of Colding-Ilmanen-Minicozzi-White \cite{CIMW} established that the shrinking sphere had the lowest entropy amongst (smooth) closed self-shrinkers, with a gap to the next lowest. Bernstein-Wang, using their own methods, were able to provide an independent proof of this result that in fact extended it to compact singular self-shrinkers, but only for $2\leq n\leq 6$. In this subsection we will extend their result to all $n\geq 2$. 

We will need the following proposition, which is implicit in the proof of \cite[Corollary 6.5]{BW}:

\begin{proposition}[\cite{BW}]
\label{prop:bwgap}
Let $n\geq 2$. Assume that, for all $2\leq k\leq n$:
\begin{itemize}
\item The set $\mathcal{CSV}_k(\Lambda_k)$ is empty;
\item The only compact boundary measure $V \in \mathcal{CSV}_k$ with $\Lambda(V)=\Lambda_k$ is the shrinking sphere $\mathbf{S}^k(\sqrt{2k})$. 
\end{itemize}
Then there exists $\epsilon_n>0$ such that $ \mathcal{CSV}_n(\Lambda_n+\epsilon_n)$ contains only the shrinking sphere $\mathbf{S}^n(\sqrt{2n})$. 
\end{proposition}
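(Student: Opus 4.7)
I would argue by contradiction and compactness, following the blueprint used by Bernstein--Wang in lower dimensions. Assume no such $\epsilon_n$ exists; then there is a sequence $V_i\in\mathcal{CSV}_n\setminus\{\mathbf{S}^n(\sqrt{2n})\}$ with $\Lambda(V_i)\downarrow\Lambda_n$. The plan is to extract a subsequential varifold limit $V_\infty$, identify it as the round sphere using the two hypotheses together with Theorem~\ref{thm:entropyclassintro}, and then invoke a local uniqueness argument to force $V_i=\mathbf{S}^n(\sqrt{2n})$ for $i$ large.

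The uniform bound $\Lambda(V_i)<\Lambda_n+1<2$ together with Lemma~\ref{lem:volumegrowth} gives uniform Euclidean volume growth, so Allard's compactness theorem (applied to these $F$-stationary integral varifolds, equivalently minimal varifolds in the conformal metric $\e^{-|x|^2/2n}\delta_{ij}$) produces a subsequential varifold limit $V_\infty$ that is itself integral and $F$-stationary. The Gaussian decay of $\rho$ combined with the uniform mass bounds ensures $F(V_i)\to F(V_\infty)$; since each $V_i$ is compact, Lemma~\ref{lem:entropymax} gives $F(V_i)=\Lambda(V_i)\to\Lambda_n$, hence $F(V_\infty)=\Lambda_n$. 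Combining Lemma~\ref{lem:entropymax} applied to $V_\infty$ with lower semicontinuity of entropy yields $\Lambda(V_\infty)=F(V_\infty)=\Lambda_n$, and then the equality case of Lemma~\ref{lem:simon} shows that $V_\infty$ neither splits off a line nor is a cone. In particular $V_\infty$ cannot be a generalized cylinder $\mathbf{S}^k(\sqrt{2k})\times\mathbf{R}^{n-k}$ with $k<n$, which would split off a line and have entropy $\Lambda_k>\Lambda_n$; and if $V_\infty$ were noncompact, Theorem~\ref{thm:entropyclassintro} (with multiplicity one, orientability, and the $\alpha$-structural hypothesis inherited from the $V_i$ via $\Lambda(V_\infty)<3/2$ and \cite[Propositions~4.2--4.3]{BW}) would furnish a compactly supported entropy-decreasing variation of $V_\infty$; this would transfer by pushforward to a nearby $V_i$ and produce---after a perturbative correction to restore $F$-stationarity---a compact element of $\mathcal{CSV}_n(\Lambda_n)$, contradicting hypothesis (a). Hence $V_\infty\in\mathcal{CSV}_n$ with entropy $\Lambda_n$, and hypothesis (b) forces $V_\infty=\mathbf{S}^n(\sqrt{2n})$.

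Finally, since the sphere is everywhere regular, Allard's $\epsilon$-regularity upgrades the varifold convergence to smooth $C^2$ convergence of $V_i$ as normal graphs over $\mathbf{S}^n(\sqrt{2n})$. A closed $F$-stationary hypersurface that is $C^2$-close to the round sphere must itself be the round sphere by local uniqueness: the linearization of the self-shrinker equation $H+\frac{1}{2}\langle x,\nu\rangle=0$ at the sphere reduces to the Laplacian, and the spurious constant kernel corresponds to infinitesimal dilation, which is killed at second order since no sphere of nearby radius satisfies the self-shrinker equation; Lemmas~\ref{lem:entropymax} and~\ref{lem:simon} further force the limit to be centred at the origin. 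Hence $V_i=\mathbf{S}^n(\sqrt{2n})$ for $i$ large, the desired contradiction. The main obstacle is the identification step: transferring a compactly supported entropy-decreasing variation from a possibly noncompact limit $V_\infty$ back to the compact approximants $V_i$ in a way that produces a genuine element of $\mathcal{CSV}_n(\Lambda_n)$ requires care both with the upper semicontinuity of the entropy and with restoring $F$-stationarity after the perturbation. The rigidity step is also delicate but relatively standard, relying on $\epsilon$-regularity plus an implicit-function-theorem argument near the smooth round sphere.
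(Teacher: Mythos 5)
The paper does not actually prove this statement: it is imported verbatim from Bernstein--Wang and labelled ``implicit in the proof of [BW, Corollary 6.5].'' So the comparison can only be against the BW argument, whose essential shape (compactness $\to$ identify the limit via the two hypotheses $\to$ rigidity near the sphere) you have correctly reproduced at the coarse level. However there are concrete gaps in the execution.

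The decisive one is the step you yourself flag as ``the main obstacle.'' If the varifold limit $V_\infty$ were noncompact, you propose to take a compactly supported entropy-decreasing variation of $V_\infty$ (from Theorem~\ref{thm:entropyclassintro}), push it forward to a nearby $V_i$, and then ``after a perturbative correction to restore $F$-stationarity'' produce an element of $\mathcal{CSV}_n(\Lambda_n)$. This cannot be made to work. An entropy-decreasing deformation destroys $F$-stationarity, and there is no implicit-function-theorem or minimization principle that returns you to a nearby $F$-stationary varifold of strictly lower entropy --- if there were, the entire problem of classifying low-entropy shrinkers would reduce to local perturbation, and the weak-flow machinery of Bernstein--Wang (and the $\epsilon$-regularity / replacement arguments behind it) would be unnecessary. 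The actual mechanism that excludes a noncompact limit is not a perturbation of $V_\infty$ at all: it comes from the entropy threshold itself together with the structure of tangent flows / blow-downs of noncompact shrinkers, which for entropy strictly below $\Lambda_{n-1}$ leaves no room for a noncompact limit of compacts. Your argument does not contain a substitute for this input.

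Two further points. First, you invoke ``the equality case of Lemma~\ref{lem:simon}'' to conclude that $V_\infty$ is neither a cone nor splits off a line, but that lemma states implications in the opposite direction ($x^\perp\equiv 0$ $\Rightarrow$ cone, $y^\perp\equiv 0$ $\Rightarrow$ splits off a line) and provides no converse from the numerical datum $\Lambda(V_\infty)=F(V_\infty)=\Lambda_n$; indeed $\Lambda=F$ holds for \emph{every} $F$-stationary varifold by Lemma~\ref{lem:entropymax}, so it carries no information. The cylinders with $k<n$ are excluded because $\Lambda_k>\Lambda_n$, which is the argument you give in the same sentence --- the appeal to Lemma~\ref{lem:simon} should simply be dropped. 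Second, to apply hypothesis (b) you need $V_\infty$ to be a \emph{compact boundary measure}, not merely a member of $\mathcal{CSV}_n$ with entropy $\Lambda_n$; you do not address how this structure passes to the limit. Finally, a small correction to the rigidity step: the linearized operator at $\mathbf{S}^n(\sqrt{2n})$ is $L=\Delta+1$, whose spectrum is $\{1,\tfrac12,-\tfrac1n,\dots\}$ --- it is already invertible, with no ``spurious constant kernel'' to kill at second order; the IFT applies directly, or one may simply cite the smooth gap theorem of Colding--Ilmanen--Minicozzi--White once Allard regularity upgrades the convergence.
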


Combining Proposition \ref{prop:bwgap} with Theorem \ref{thm:csmall} and Corollary \ref{cor:closedentropy} then immediately yields our gap theorem for compact singular self-shrinkers in all dimensions $n\geq 2$ as follows:

\begin{corollary}
\label{thm:entropygap}
Let $n\geq 2$. There exists $\epsilon_n>0$ so that $ \mathcal{CSV}_n(\Lambda_n+\epsilon_n)$ contains only the shrinking sphere $\mathbf{S}^n(\sqrt{2n})$. 
\end{corollary}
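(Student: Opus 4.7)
The plan is to verify directly that both hypotheses of Proposition \ref{prop:bwgap} are satisfied for every $2\leq k\leq n$, and then invoke that proposition to conclude.

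First I would note that the emptiness hypothesis $\mathcal{CSV}_k(\Lambda_k)=\emptyset$ for all $2\leq k\leq n$ is precisely the content of Theorem \ref{thm:csmall}, which has already been established in this section. So no further work is required for the first bullet.

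Next I would verify the rigidity hypothesis: that the only compact boundary measure in $\mathcal{CSV}_k$ with entropy exactly $\Lambda_k$ is the shrinking sphere $\mathbf{S}^k(\sqrt{2k})$. By Corollary \ref{cor:closedentropy}, any compact boundary measure $V$ in $\mathbf{R}^{k+1}$ has $\Lambda(V)\geq \Lambda_k$, with equality if and only if $V$ is a round sphere (as an unoriented varifold). The additional requirement that $V\in\mathcal{CSV}_k$ means $V$ is $F$-stationary, i.e., its generalised mean curvature satisfies $\vec{H}=-\tfrac{1}{2}x^\perp$. A round sphere $\mathbf{S}^k(r)$ satisfies this equation exactly when $r=\sqrt{2k}$, so $V$ must in fact be $\mathbf{S}^k(\sqrt{2k})$, as required.

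With both hypotheses verified, Proposition \ref{prop:bwgap} applies and produces the desired $\epsilon_n>0$ such that every member of $\mathcal{CSV}_n(\Lambda_n+\epsilon_n)$ is the shrinking sphere. The statement of the corollary follows immediately. There is no serious obstacle here: all of the substantive input, namely the classification of entropy-stable singular self-shrinkers in Proposition \ref{prop:entstablecompact}, the emptiness of $\mathcal{CSV}_k(\Lambda_k)$ from Theorem \ref{thm:csmall}, the characterisation of minimisers in Corollary \ref{cor:closedentropy}, and Bernstein--Wang's compactness-based upgrade from minimisers to a gap in Proposition \ref{prop:bwgap}, has already been set up. The only step that warrants a single line of care is the observation that the rigidity case of Corollary \ref{cor:closedentropy} combined with the self-shrinker equation forces the sphere's radius to be $\sqrt{2k}$.
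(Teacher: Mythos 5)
Your proof is correct and follows essentially the same route as the paper: verify the two hypotheses of Proposition \ref{prop:bwgap} via Theorem \ref{thm:csmall} and the rigidity case of Corollary \ref{cor:closedentropy}, then invoke the proposition. The one-line observation that $F$-stationarity pins the radius of the round sphere to $\sqrt{2k}$ is a welcome clarification of a detail the paper leaves implicit, since Corollary \ref{cor:closedentropy} only identifies the minimiser up to translation and dilation.
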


\subsection{Entropy lower bound for partially collapsed self-shrinkers}
We also generalise the results of Bernstein-Wang for so-called partially collapsed self-shrinkers (see \cite[Definition 6.6]{BW}) to all dimensions $n\geq 3$. The following is implicit in the proof of \cite[Corollary 6.7]{BW}:

\begin{proposition}[\cite{BW}]
\label{prop:bwpartial}
Let $n\geq 3$.  Assume that, for all $2\leq k\leq n-1$:
\begin{itemize}
\item The set $\mathcal{CSV}_k(\Lambda_k)$ is empty;
\item The only compact boundary measure $V \in \mathcal{CSV}_k$ with $\Lambda(V)=\Lambda_k$ is the shrinking sphere $\mathbf{S}^k(\sqrt{2k})$. 
\end{itemize}
Then any partially collapsed $V\in\mathcal{SV}_n$ with noncompact support has entropy $\Lambda(V)\geq \Lambda_{n-1}$, with equality if and only if $V$ is the round cylinder $\mathbf{S}^{n-1}(\sqrt{2(n-1)})\times \mathbf{R}$. 
\end{proposition}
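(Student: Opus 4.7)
The plan is to follow essentially the same induction and blow-down strategy underlying Propositions \ref{prop:bwfinal} and \ref{prop:bwgap}, reducing the $n$-dimensional noncompact problem to the $(n-1)$-dimensional compact problem that the two inductive hypotheses already control. The notion of \emph{partially collapsed} from \cite[Definition 6.6]{BW} encodes precisely the statement that some appropriate blow-down or parabolic rescaling of $V$ produces a compact $(n-1)$-dimensional self-shrinking object of boundary-measure type; this is the mechanism that converts noncompactness in dimension $n$ into compactness in dimension $n-1$.

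First, I would use the partial collapse hypothesis to extract a limit $V' \in \mathcal{CSV}_{n-1}$ obtained by parabolic rescaling at a distinguished point at which collapse occurs. Huisken's monotonicity formula together with the fact that entropy is non-increasing under such rescaling immediately yields $\Lambda(V) \geq \Lambda(V')$. The first inductive assumption applied with $k = n-1$ gives $\mathcal{CSV}_{n-1}(\Lambda_{n-1}) = \emptyset$, so $\Lambda(V') \geq \Lambda_{n-1}$, and combining these inequalities produces the desired bound $\Lambda(V) \geq \Lambda_{n-1}$.

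For the rigidity statement, suppose $\Lambda(V) = \Lambda_{n-1}$. Then both inequalities above must collapse to equalities, so $\Lambda(V') = \Lambda_{n-1}$, and the second inductive hypothesis applied to $V' \in \mathcal{CSV}_{n-1}$ forces $V'$ to be the shrinking sphere $\mathbf{S}^{n-1}(\sqrt{2(n-1)})$. Equality in the monotonicity used in step one, via the equality case of Lemma \ref{lem:entropymax}, forces the relevant $y^\perp$ to vanish $\mu_V$-almost everywhere in the direction of collapse; Lemma \ref{lem:simon}(2) then shows that $V$ splits off a line. The transverse slice being a round sphere of the correct radius identifies $V$ with $\mathbf{S}^{n-1}(\sqrt{2(n-1)}) \times \mathbf{R}$.

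The main obstacle, just as in the compact gap theorem of Proposition \ref{prop:bwgap}, is step one: carefully unpacking the \emph{partial collapse} definition to produce a genuine compact boundary measure in $\mathcal{CSV}_{n-1}$ as a blow-down limit of $V$, rather than merely a varifold. Ensuring that the limit inherits the orientability, multiplicity-one structure, and singular set bounds required to lie in the class where the inductive hypotheses apply is the technical heart of the argument and relies crucially on the regularity framework built by Bernstein-Wang for their weak flow. Once this is granted, the rest of the argument is a direct transcription of the compact entropy-gap argument to the noncompact partially-collapsed setting.
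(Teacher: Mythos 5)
The paper does not prove this proposition; it is cited as implicit in Bernstein--Wang's proof of \cite[Corollary~6.7]{BW}, with the dimensional restriction of \cite{BW} abstracted into the two stated hypotheses so that the new Theorem~\ref{thm:csmall} and Corollary~\ref{cor:closedentropy} can supply them for every $k$. There is thus no in-paper argument to compare against; checking the proposition amounts to re-reading Bernstein--Wang's Section~6 and verifying that those two hypotheses are the only dimension-dependent inputs.

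Judged on its own, your outline reads the roles of the two hypotheses correctly --- the first gives the numerical bound via the emptiness of $\mathcal{CSV}_{n-1}(\Lambda_{n-1})$, the second gives rigidity, and Lemma~\ref{lem:simon}(2) is indeed a plausible ingredient for recovering the line factor. The genuine gap is your first step, ``extract a limit $V'\in\mathcal{CSV}_{n-1}$ by parabolic rescaling at a distinguished point where collapse occurs.'' A self-shrinker $V$ determines the ancient flow $\sqrt{-t}\,V$; parabolic rescaling of this flow at the space-time origin returns $V$ itself, at a regular point returns a multiplicity-one hyperplane, and at a point of $\sing V$ returns a tangent cone. In every case the output is an $n$-dimensional object, generally noncompact, and never a compact $(n-1)$-dimensional boundary measure. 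Even taking $V$ to be the model cylinder $\mathbf{S}^{n-1}(\sqrt{2(n-1)})\times\mathbf{R}$, no blow-up of $V$ produces the sphere $\mathbf{S}^{n-1}(\sqrt{2(n-1)})\subset\mathbf{R}^n$; the drop in both dimension and codimension has to come from somewhere else. In \cite{BW} it comes from their weak-flow construction and a minimizer-extraction argument in the partially-collapsed class analogous to Lemma~\ref{lem:bwmin}, after which one identifies a compact $(n-1)$-dimensional cross-section. Your guessed reading of \cite[Definition~6.6]{BW} --- that a blow-down of $V$ is itself a compact $(n-1)$-dimensional shrinker --- is therefore not what is needed, and you are candid that unpacking the real definition is the crux you cannot supply. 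The consequence is that the one concrete mechanism you offer for the crucial dimension reduction does not work, so the proposal is an informed outline rather than a proof.
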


As before, we combine Proposition \ref{prop:bwpartial} with Theorem \ref{thm:csmall} and Corollary \ref{cor:closedentropy} to obtain the lower bound for all $n\geq 3$:

\begin{corollary}
Let $n\geq 3$. Any partially collapsed self-shrinker $V\in\mathcal{SV}_n$ with noncompact support has entropy $\Lambda(V)\geq \Lambda_{n-1}$, with equality if and only if $V$ is the round cylinder $\mathbf{S}^{n-1}(\sqrt{2(n-1)})\times \mathbf{R}$. 
\end{corollary}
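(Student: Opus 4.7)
The plan is to obtain the corollary as an immediate consequence of Proposition \ref{prop:bwpartial} due to Bernstein--Wang, by feeding in the two hypotheses as established earlier in this section. The proposition already reduces the partially collapsed entropy lower bound and rigidity in dimension $n$ to (i) emptiness of $\mathcal{CSV}_k(\Lambda_k)$ for $2\leq k\leq n-1$ and (ii) the fact that the only compact boundary measure in $\mathcal{CSV}_k$ achieving entropy $\Lambda_k$ is the round sphere $\mathbf{S}^k(\sqrt{2k})$, in the same range of $k$. So the task reduces to pointing out that both hypotheses are already at our disposal.

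For hypothesis (i), I would invoke Theorem \ref{thm:csmall}, which asserts exactly that $\mathcal{CSV}_k(\Lambda_k)=\emptyset$ for every $k\geq 2$, in particular for $2\leq k \leq n-1$. For hypothesis (ii), I would appeal to the equality case of Corollary \ref{cor:closedentropy}: any compact boundary measure $V\in\mathcal{CSV}_k$ with $\Lambda(V)=\Lambda_k$ is, after translation and dilation, the round sphere $\mathbf{S}^k(\sqrt{2k})$. Since $F$-stationarity already fixes the centre and scale (using Lemma \ref{lem:simon} together with Lemma \ref{lem:entropymax}, as a compact $F$-stationary varifold cannot split off a line or be a cone), we conclude that the minimiser is precisely $\mathbf{S}^k(\sqrt{2k})$ on the nose. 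Both bullet points of Proposition \ref{prop:bwpartial} are thus verified.

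With the hypotheses in hand, Proposition \ref{prop:bwpartial} directly produces the desired conclusion: any partially collapsed $V\in\mathcal{SV}_n$ with noncompact support satisfies $\Lambda(V)\geq \Lambda_{n-1}$, with equality only for the round cylinder $\mathbf{S}^{n-1}(\sqrt{2(n-1)})\times \mathbf{R}$. There is no real obstacle here, since all of the hard analytic work has been absorbed into Theorem \ref{thm:csmall} and Corollary \ref{cor:closedentropy} (and ultimately into Proposition \ref{prop:entstablecompact}, which is the specialisation of the main classification Theorem \ref{thm:entropyclassintro} to the compact setting). The only mild subtlety worth mentioning in the write-up is that Proposition \ref{prop:bwpartial} is stated for $n\geq 3$ because its conclusion references the cylinder $\mathbf{S}^{n-1}(\sqrt{2(n-1)})\times \mathbf{R}$ as a nontrivial limit object, and the hypothesis on dimensions $2\leq k\leq n-1$ requires $n-1\geq 2$; both conditions are met exactly when $n\geq 3$, matching the stated range.
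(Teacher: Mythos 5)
Your proof is correct and takes essentially the same route as the paper, which simply combines Proposition \ref{prop:bwpartial} with Theorem \ref{thm:csmall} and Corollary \ref{cor:closedentropy}. Your added remark pinning down the centre and scale of the minimising round sphere via $F$-stationarity is a harmless elaboration of the same argument.
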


\section{Colding-Minicozzi theory}
\label{sec:CMtheory}

In this section we recall some results from \cite{CMgeneric}, which allow us to relate entropy-stability to $F$-stability. We will also need variation formulae for the Gaussian area functionals, as well as the regularity theory for self-shrinkers with $\lambda_1$ bounded from below. The proofs found in \cite{CMgeneric} extend naturally to the varifold setting, so we will state the results in this setting. 

\subsection{Variations}
\label{sec:variations}

Here we record a second variation formula for the Gaussian area of an orientable $F$-stationary varifold $V$ in which the centre of the Gaussian functional may change. Specifically, in this subsection we consider normal variations $V_s$ of $V$, with generator $X$ compactly supported away from $\sing V$. 

If $\reg V$ is orientable, each $\reg V_s$ is still orientable with normal denoted $\nu_s$, and the restriction of $X$ is given by $X|_{\reg V_s} = f_s\nu_s$ for some functions $f_s$ compactly supported in $\reg V_s$. For ease of presentation we will give the formulae using the functions $f_s$ with the understanding that $f_s=0$ off the regular part $\reg V_s$.

\begin{proposition}[Second variation at a critical point]
\label{lem:2ndvarcrit}
Let $V$ be an orientable $F$-stationary $n$-varifold in $\mathbf{R}^{n+1}$ with finite entropy. 
Let $V_s$ be a normal variation of $V$ with variation field $X$, compactly supported away from $\sing V$. Write $X|_{\reg V_s} = f_s\nu_s$, with $f=f_0$. Also let $x_s$ and $t_s$ be variations of $x_0=0$ and $t_0=1$ with $x'_0=y$ and $t'_0=a$. Then $\pr_{s}^2|_{s=0} (F_{x_s,t_s}(V_s))$ is given by
\begin{equation}
\int \left(-fLf+2faH-a^2H^2+f\langle y,\nu\rangle - \frac{|y^\perp|^2}{2}\right)\rho \d\mu_V.
\end{equation}
Here we understand the $H^2$ term via the generalised mean curvature, $H^2 = |\vec{H}|^2  = \frac{1}{4}|x^\perp|^2$. 
\end{proposition}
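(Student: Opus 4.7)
The plan is to pull back to $V$ via the flow $\Phi_s := \Phi_s^X$ and write
\[
F_{x_s,t_s}(V_s) \;=\; \int \rho_{x_s,t_s}(\Phi_s(x))\,J_s(x)\,d\mu_V(x),
\]
where $J_s$ is the tangential Jacobian of $\Phi_s$ along $V$. Since $X$ is compactly supported in $\reg V$, the isotopy $\Phi_s$ is smooth on a fixed compact subset of $\reg V$ containing $\supp X$, and off this subset $V_s = V$ and $J_s \equiv 1$. Finite entropy of $V$ together with Gaussian decay of $\rho_{x_s, t_s}$ uniformly for $(x_s, t_s)$ near $(0, 1)$ will justify differentiating twice under the integral sign by dominated convergence.

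\textbf{Bilinear decomposition.} The second variation at $s = 0$ is a quadratic form in the three infinitesimal parameters $(f, a, y)$, and the plan is to compute it block by block. The pure $(f, f)$ block, with $(x_s, t_s) \equiv (0, 1)$, is the classical second variation of the $F$-functional under a compactly supported normal variation of a smooth self-shrinker, as in \cite[Theorem 4.14]{CMgeneric}, yielding $\int (-fLf)\rho\,d\mu_V$; since $X$ is supported in the smooth region of $V$, this formula applies verbatim. The remaining five blocks arise from the explicit parameter-dependence of $\rho_{x_s, t_s}$ through
\[
\partial_{x_s}\rho_{x_s,t_s}\big|_{s=0} = \tfrac{x}{2}\rho, \qquad \partial_{t_s}\rho_{x_s,t_s}\big|_{s=0} = \bigl(\tfrac{|x|^2}{4} - \tfrac{n}{2}\bigr)\rho,
\]
combined with the first-variation contribution from the flow of $X$.

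\textbf{Parameter blocks via $F$-stationarity.} The cross blocks $(f, a)$ and $(f, y)$ reduce directly to $2faH$ and $f\langle y, \nu\rangle$ using the self-shrinker identity $H = \tfrac{1}{2}\langle x, \nu\rangle$ (with the convention $\vec H = -H\nu$). For the pure blocks I would invoke three integration-by-parts identities on the $F$-stationary $V$: first, $\mathcal{L}\langle y, x\rangle = -\tfrac{1}{2}\langle y, x\rangle$, which gives $\int \langle y, x\rangle^2\rho = 2\int |y^T|^2\rho$ and reduces the $(y, y)$ block to $-\tfrac{1}{2}|y^\perp|^2$ integrated; second, $\mathcal{L}(|x|^2) = 2n - |x|^2$, which together with a further IBP using $\mathcal{L}(|x|^4/2)$ yields $\int |x|^4\rho = 4n(n+2)F(V) - 16\int H^2\rho$ and reduces the $(a, a)$ block to $-a^2H^2$ integrated; and third, $\int \langle y, \nu\rangle H\rho = 0$, which follows from two independent IBP evaluations of $\int \langle y, x\rangle|x|^2\rho$ and forces the $(a, y)$ cross block to vanish. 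All linear-in-$(f, a, y)$ contributions vanish because $V$ is $F$-stationary (for the $f$-direction) and because $(x_0, t_0) = (0, 1)$ is a critical point of $(x_0, t_0) \mapsto F_{x_0, t_0}(V)$ by Lemma \ref{lem:entropymax} (for the $a$- and $y$-directions).

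\textbf{Main obstacle.} The main difficulty is calculational rather than conceptual: the six blocks and the IBP identities must be tracked and combined carefully. Boundary terms at infinity are controlled by finite entropy together with the Gaussian weight $\rho$, which dominates the polynomial growth of the test functions appearing in the IBP identities. The singular set $\sing V$ plays no role, since $X$ has compact support away from $\sing V$, so the moving part of $V_s$ lies in a smooth region throughout and the entire variational computation is of the standard smooth kind.
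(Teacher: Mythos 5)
Your proposal is correct and takes essentially the same approach as the paper, which itself treats this as a direct transcription of Colding--Minicozzi's Theorem 4.14: the variation is compactly supported away from $\sing V$ so the pointwise computation is the smooth one, and the integration-by-parts identities needed to specialize to a critical point (your identities for $\mathcal{L}\langle y,x\rangle$, $\mathcal{L}|x|^2$, etc.) extend to the varifold setting by applying the first-variation identity \eqref{eq:genmeancurv} to exponentially-decaying vector fields, with finite entropy controlling convergence. The paper gives this only as a pointer; you have filled in the block decomposition explicitly, but the route is the same.
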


The point is that the proofs of the first and second variation formulae, \cite[Lemma 3.1]{CMgeneric} and \cite[Theorem 4.1]{CMgeneric} respectively, go through essentially unchanged, since the normal variation $V_s$ takes place away from the singular set $\sing V_s=\sing V$ and the contributions of $x_s$ and $t_s$ just come from differentiating the weight. To specialise to a critical point as in \cite[Theorem 4.14]{CMgeneric}, one needs certain integral identities on self-shrinkers; these can be proven in the varifold setting by applying (\ref{eq:genmeancurv}) to the appropriate (exponentially decaying) vector fields. 

\subsection{Entropy stability and $F$-stability}

In this subsection we continue to consider normal variations $V_s$ of an $F$-stationary varifold $V$. 

First, for normal variations compactly supported away from $\sing V$, the proof of \cite[Theorem 0.15]{CMgeneric} goes through to give:

\begin{theorem}
\label{thm:entropyF} 
Suppose $V$ is an orientable $F$-stationary varifold with finite entropy that does not split off a line and is not a cone. If $V$ is $F$-unstable then it is entropy-unstable, where the unstable variation is compactly supported away from $\sing V$. 
\end{theorem}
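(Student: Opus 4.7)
The plan is to show that the $F$-unstable variation itself witnesses entropy-instability. Let $X = f\nu$ generate the $F$-unstable variation, so $f \in C^2$ has compact support in $\reg V$, and set $V_s = (\Phi^X_s)_\# V$. The goal is $\Lambda(V_s) < \Lambda(V)$ for $s > 0$ small. I would analyse the function
\begin{equation*}
G(x_0, t_0, s) := F_{x_0, t_0}(V_s)
\end{equation*}
separately on a small neighbourhood of $(0, 1, 0)$ and on its complement, aiming for $G < \Lambda(V) = G(0, 1, 0)$ whenever $s > 0$ is small and $(x_0, t_0, s) \neq (0, 1, 0)$.

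First I would make precise the setup. By the discussion following Lemma \ref{lem:simon}, the hypotheses (no line split, not a cone) imply that $(0, 1)$ is the \emph{strict} global maximum of $(x_0, t_0) \mapsto F_{x_0, t_0}(V)$. Next, I would argue that the super-level sets $\{(x_0, t_0) : F_{x_0, t_0}(V) \geq \Lambda(V) - \epsilon\}$ are compact in $\mathbf{R}^{n+1} \times (0, \infty)$: Euclidean volume growth (Lemma \ref{lem:volumegrowth}) together with Gaussian decay of the kernel handles $|x_0| \to \infty$; Lemma \ref{lem:entropymax} applied along $(0, 1 + as^2)$ together with volume growth handles $t_0 \to \infty$; and as $t_0 \to 0$, $F_{x_0, t_0}(V) \to \Theta^n(V, x_0)$, which is upper-semicontinuous and bounded strictly below $\Lambda(V)$ away from $0$ by the monotonicity formula. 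This compactness step is the main technical obstacle in the singular setting, since one must combine density bounds with the asymptotic geometry of a possibly non-compact $V$ having a nontrivial singular set.

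Next, I would compute the Hessian of $G$ at the critical point $(0, 1, 0)$. The gradient vanishes there: varying only in $s$ gives zero by $F$-stationarity, while varying only in $(x_0, t_0)$ gives zero since $(0, 1)$ maximises $F_{\cdot, \cdot}(V)$. Reparametrising the flow by a factor $b$ replaces the generator $f$ with $bf$, so by Proposition \ref{lem:2ndvarcrit} the Hessian quadratic form at $(y, a, b) \in \mathbf{R}^{n+1} \times \mathbf{R} \times \mathbf{R}$ reads
\begin{equation*}
Q(y, a, b) = \int \Bigl(-b^2 f L f + 2ab f H - a^2 H^2 + b f \langle y, \nu\rangle - \tfrac{1}{2}|y^\perp|^2\Bigr)\rho\,d\mu_V.
\end{equation*}
By $F$-instability, $Q(y, a, 1) < 0$ for every $(y, a)$, so by homogeneity $Q(y, a, b) = b^2 Q(y/b, a/b, 1) < 0$ whenever $b \neq 0$. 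When $b = 0$ and $(y, a) \neq 0$, one has $Q(y, a, 0) = -a^2 \int H^2 \rho - \tfrac{1}{2}\int |y^\perp|^2 \rho$; Lemma \ref{lem:simon}(1) combined with non-conicality gives $\int H^2 \rho > 0$ (since $\vec{H} = -\tfrac{1}{2}x^\perp$), and Lemma \ref{lem:simon}(2) combined with non-splitting gives $\int |y^\perp|^2 \rho > 0$ for $y \neq 0$. Thus $Q$ is negative definite on all of $\mathbf{R}^{n+1} \times \mathbf{R} \times \mathbf{R}$.

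Finally I would combine the two inputs. On a small neighbourhood of $(0, 1, 0)$, Taylor expansion of $G$ together with negative definiteness of $Q$ yields $G(x_0, t_0, s) \leq \Lambda(V) - c(|x_0|^2 + (t_0 - 1)^2 + s^2) + O(|(x_0, t_0 - 1, s)|^3) < \Lambda(V)$ for $s > 0$ small and $(x_0, t_0, s) \neq (0, 1, 0)$. On the complement within the compact super-level set, the strict maximum at $(0, 1)$ and uniform continuity of $G$ in $s$ (valid because $f$ has compact support in $\reg V$) give $G(x_0, t_0, s) < \Lambda(V)$ for small $s$; outside the super-level set the same conclusion is immediate by continuity. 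Taking the supremum over $(x_0, t_0)$ yields $\Lambda(V_s) < \Lambda(V)$, and since $V_s$ is compactly supported away from $\sing V$ by construction of $f$, this gives the required entropy-unstable variation.
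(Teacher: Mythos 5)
Your proposal reconstructs essentially the Colding--Minicozzi argument for \cite[Theorem 0.15]{CMgeneric}, which is exactly what the paper invokes (the paper's entire proof of Theorem \ref{thm:entropyF} is the remark that the CM proof carries over once one has Proposition \ref{lem:2ndvarcrit}, Lemma \ref{lem:entropymax}, and Lemma \ref{lem:simon}). The Hessian computation, the use of $F$-instability together with the strictness of the maximum from Lemma \ref{lem:simon} to get negative definiteness of $Q$ on all of $\mathbf{R}^{n+1}\times\mathbf{R}\times\mathbf{R}$, the Taylor step near $(0,1,0)$, and the compactness of super-level sets are all the same ingredients.

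One step in your final combination is too quick. Having established that the super-level set $K_\epsilon := \{(x_0,t_0): F_{x_0,t_0}(V) \ge \Lambda(V)-\epsilon\}$ is compact, you handle $(x_0,t_0) \notin K_\epsilon$ by saying the conclusion is ``immediate by continuity.'' That step is not immediate: the estimate $|F_{x_0,t_0}(V_s)-F_{x_0,t_0}(V)| < \epsilon$ must hold \emph{uniformly} over the noncompact region $(x_0,t_0)\notin K_\epsilon$, and the naive bound fails because the kernel $\rho_{x_0,t_0}$ has gradient of order $t_0^{-(n+1)/2}$, so ``$G$ is continuous in $s$'' is not uniform as $t_0\to 0$ (with $x_0$ near the support of the variation). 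What is actually needed --- and what the CM proof supplies --- is to bound $F_{x_0,t_0}(V_s)$ \emph{directly} in the degenerate regimes, uniformly for small $|s|$: for $|x_0|\to\infty$ or $t_0\to\infty$ the contribution of the compact perturbation region vanishes, so $F_{x_0,t_0}(V_s) - F_{x_0,t_0}(V)\to 0$; and for $t_0\to 0$ one compares with the density $\Theta^n(V_s,x_0)$ (controlled since the variation is a $C^2$-small diffeomorphism supported away from $\sing V$, so the density is bounded away from $\Lambda(V)$). Equivalently, one shows the super-level sets of $F(V_s)$ are compact \emph{uniformly} in small $s$. This is the part of your ``main technical obstacle'' that still needs to be filled in; the rest of the proposal is sound.
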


For stationary cones $V=C(W)$ we need to consider homogenous variations, induced by a normal variation of $W$ in $\mathbf{S}^n$ supported away from $\sing W$. The following is implicit in the proof of \cite[Theorem 0.14]{CMgeneric}:

\begin{theorem}
\label{thm:entropyFcone}
Let $n\geq 3$. Suppose that $V=C(W)$ is an orientable stationary $n$-cone in $\mathbf{R}^{n+1}$ that does not split off a line. If $W$ is homogenously $F$-unstable then $V$ is entropy-unstable with respect to the induced homogenous variation.
\end{theorem}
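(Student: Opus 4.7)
The plan is to show that the induced homogenous variation $V_s = C(W_s)$ strictly decreases entropy for small $s > 0$. Since each $V_s$ is a cone, dilation invariance and Lemma \ref{lem:entropymax} give $\Lambda(V_s) = \sup_{y} F_{y,1}(V_s)$. Set $\Phi(s,y) = F_{y,1}(V_s)$; because $W_s$ is compactly supported away from $\sing W$ and the Gaussian weight is smooth, $\Phi$ is smooth on a neighbourhood of $(0,0) \in \mathbf{R}\times\mathbf{R}^{n+1}$.

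First I would verify that $(0,0)$ is a critical point of $\Phi$. The $y$-derivatives vanish because $y = 0$ is a global maximum of $y\mapsto F_{y,1}(V)$ by Lemma \ref{lem:entropymax}, while $\partial_s\Phi(0,0) = 0$ follows from $F(C(W_s)) = \|W_s\|/\vol(\mathbf{S}^{n-1})$ (Corollary \ref{cor:coneentropy}) combined with stationarity of $W$ in $\mathbf{S}^n$. Using Proposition \ref{lem:2ndvarcrit} with $t_s \equiv 1$ and $x_s = sy$, the Hessian quadratic form in direction $(1,y)$ is
\begin{equation*}
Q(y) := (1,y)^T (\mathrm{Hess}\,\Phi)(0,0)(1,y) = \partial_s^2|_{s=0} F_{sy,1}(V_s) = \int\Bigl(-fLf + f\langle y,\nu\rangle - \tfrac{1}{2}|y^\perp|^2\Bigr)\rho\,d\mu_V,
\end{equation*}
where $f$ denotes the speed of the induced variation on $\reg V$. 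By hypothesis $Q(y) < 0$ for every $y \in \mathbf{R}^{n+1}$. Writing the Hessian as $\bigl(\begin{smallmatrix} A & b^T \\ b & C \end{smallmatrix}\bigr)$ with $C = -\tfrac{1}{2}\int \nu\otimes\nu\,\rho\,d\mu_V$, the matrix $C$ is strictly negative definite by Lemma \ref{lem:simon} since $V$ does not split off a line, so $Q$ is concave in $y$ with strict maximum at $\tilde y = -C^{-1}b$ equal to the Schur complement $A - b^T C^{-1}b < 0$.

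The implicit function theorem applied to $\partial_y\Phi(s,y) = 0$ then produces a smooth family $y^*(s) = s\tilde y + O(s^2)$ with $y^*(0) = 0$ of local maxima of $\Phi(s,\cdot)$. Promoting $y^*(s)$ to the \emph{global} maximum requires controlling $\Phi(s,\cdot)$ at large $|y|$: by cone scale invariance, $F_{y,1}(V_s) = F_{\hat y,1/|y|^2}(V_s) \to \Theta(V_s,\hat y)$ as $|y|\to\infty$ with $\hat y = y/|y|$. Since $V$ is non-flat and does not split off a line, the strict maximum property at $y=0$ together with upper semi-continuity of density forces $\sup_{\hat y\in\mathbf{S}^n}\Theta(V,\hat y) < \Lambda(V)$; upper semi-continuity under varifold convergence $V_s\to V$ preserves this gap uniformly, giving $R,\delta>0$ with $F_{y,1}(V_s) \leq \Lambda(V) - \delta$ for $|y|\geq R$ and $s$ small. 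Within $|y|\leq R$, uniform continuity of $\Phi$ and uniqueness of the max at $y = 0$ for $\Phi(0,\cdot)$ place the max of $\Phi(s,\cdot)$ in a small neighbourhood of $0$, where by IFT uniqueness it coincides with $y^*(s)$. A Taylor expansion at $(0,0)$ then yields $\Lambda(V_s) - \Lambda(V) = \tfrac{s^2}{2}(A - b^T C^{-1}b) + o(s^2) < 0$, so $V_s$ is the desired entropy-unstable variation.

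The main obstacle is the asymptotic analysis needed to upgrade the IFT-produced local maximum $y^*(s)$ to the global maximum of $\Phi(s,\cdot)$; this rests on the strict density gap $\sup_{\hat y}\Theta(V,\hat y) < \Lambda(V)$ for a non-flat cone not splitting off a line, and its stability under the varifold perturbation $V_s \to V$.
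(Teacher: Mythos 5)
Your skeleton --- a strictly negative definite Hessian in $(s,y)$ from the second variation formula, the implicit function theorem to track the local maximum $y^*(s) = O(s)$, and decay of $\Phi(s,\cdot)$ at infinity to upgrade to a global maximum --- is exactly the Colding--Minicozzi strategy that the paper cites for this statement, and the observation that concavity of $Q$ in $y$ (via Lemma \ref{lem:simon}, since $V$ does not split off a line) makes the Schur complement $A - b^T C^{-1} b < 0$ is the right reformulation of homogenous $F$-instability. However, the asymptotic step you yourself flag as the obstacle is not actually closed by the argument you give, and this is a genuine gap. The density inequality $\sup_{\hat y\in\mathbf{S}^n}\Theta(V,\hat y)<\Lambda(V)$ is fine (in fact you can skip density altogether: by Lemma \ref{lem:entropymax} with $a=0$, $r\mapsto F_{r\hat y,1}(V)$ is nonincreasing, so $\sup_{|y|\geq 1} F_{y,1}(V) \leq \sup_{\hat y\in\mathbf{S}^n} F_{\hat y,1}(V) < \Lambda(V)$ by compactness of the unit sphere and strictness of the maximum at $y=0$). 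But passing from this to the required estimate $F_{y,1}(V_s)\leq\Lambda(V)-\delta$ for $|y|\geq R$ and small $s$ does not follow from upper semicontinuity of density: that gives $\limsup_{s\to 0}\Theta(V_s,\hat y)\leq\Theta(V,\hat y)$, i.e.\ control only of the $|y|\to\infty$ \emph{limit}, not of $F_{y,1}(V_s)$ at finite $|y|$; and the monotonicity you used for $V$ is unavailable for $V_s$ since $V_s$ is not $F$-stationary for $s\neq 0$. What is actually needed is a quantitative bound relating $F_{y,1}(V_s)$ to $F_{y,1}(V)$, uniform over $|y|\geq R$ and small $s$, exploiting that $V_s$ and $V$ coincide outside the cone over a compact subset of $\reg W$, that the homogenous perturbation there is uniformly controlled in $C^2$, and that the Gaussian decay beats the polynomial volume growth of Lemma \ref{lem:volumegrowth}; this is precisely the content of the large-$(x_0,t_0)$ estimates in \cite{CMgeneric} that the paper is implicitly invoking, and your proof would need to reproduce a cone-adapted version of them.

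One smaller point: Proposition \ref{lem:2ndvarcrit} is stated for variations compactly supported away from $\sing V$, whereas a homogenous variation is not compactly supported, so the formula you invoke does not apply directly; you should either cite Remark \ref{rmk:2ndvarcritcone}, which verifies the formula for homogenous variations, or work from Proposition \ref{lem:2ndvarcritcone} on the link.
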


\subsection{Regularity of self-shrinkers with stability spectrum bounded below}
\label{sec:regularity}

Here we record a regularity result for $F$-stationary varifolds $V$ with $\lambda_1(V)=\lambda_1(\Sigma)>-\infty$ that satisfy the $\alpha$-structural hypothesis, where $\Sigma=\reg V$.

The content of the following proposition is essentially contained in \cite[Section 12]{CMgeneric} and depends on the regularity theory of Wickramasekera \cite{wick}; it follows from the proof of \cite[Proposition 12.24]{CMgeneric}, noting that the proof of \cite[Lemma 12.7]{CMgeneric} goes through with any lower bound $\lambda_1(\Sigma)>-\infty$ because the $\int_\Sigma \phi^2\rho$ term in the stability inequality (\ref{eq:stabilityineq}) may be estimated on small balls (by the Poincar\'{e} inequality) to be small relative to $\int_\Sigma |\nabla \phi|^2\rho$.

\begin{proposition}
\label{prop:regularity}
Let $V$ be an orientable $F$-stationary $n$-varifold in $\mathbf{R}^{n+1}$ with finite entropy, satisfying the $\alpha$-structural hypothesis for some $\alpha\in(0,\frac{1}{2})$. Suppose that $\lambda_1(V) >-\infty$. Then $V$ corresponds to an embedded, analytic hypersurface away from a closed set of singularities of Hausdorff dimension at most $n-7$ (that is empty if $n\leq 6$ and discrete if $n=7$.)
\end{proposition}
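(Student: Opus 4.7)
The strategy is to invoke Wickramasekera's varifold regularity theorem \cite{wick}, which produces $C^{1,\alpha}$ regularity away from a closed set of Hausdorff codimension at least $7$ for codimension-one stable integral varifolds satisfying the $\alpha$-structural hypothesis (together with a Euclidean volume bound, which we have from Lemma \ref{lem:volumegrowth} since $\Lambda(V)<\infty$). Once the regular part is $C^{1,\alpha}$, analyticity will follow by standard elliptic regularity applied to the self-shrinker equation $\vec{H}=-\tfrac12 x^\perp$, which is a quasilinear elliptic PDE with analytic coefficients; Schauder theory first upgrades to $C^\infty$ and classical analytic-elliptic regularity then gives analyticity of $\reg V$.

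The only non-trivial task is bridging the gap between the stability available to us and the area-stability required by Wickramasekera. The hypothesis $\lambda_1(V)>-\infty$ gives, for Lipschitz $f$ compactly supported in $\reg V$, the weighted stability inequality (\ref{eq:stabilityineq})
\begin{equation*}
\int_{\reg V}|A|^2f^2\,\rho \leq \int_{\reg V}|\nabla f|^2\,\rho + C\int_{\reg V}f^2\,\rho,
\qquad C=-\tfrac12-\lambda_1(V).
\end{equation*}
I would localise this by fixing $p\in\mathbf{R}^{n+1}$ and a small radius $r>0$, and restricting to test functions supported in $B_r(p)\cap\reg V$. On such a ball $\rho$ is bounded above and below by positive constants (depending on $p,r$), so the weight can be stripped with controlled constants. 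Combining the Poincar\'e inequality on $B_r(p)\cap\reg V$ with the Euclidean volume bound from Lemma \ref{lem:volumegrowth} yields an estimate of the form $\int f^2\,d\mu_V \leq \tilde{C}(p)\,r^2 \int|\nabla f|^2\,d\mu_V$, with $\tilde{C}(p)$ depending only on $n$, $p$ and $\Lambda(V)$. Choosing $r$ small enough that $C\tilde{C}(p)r^2\leq \tfrac12$ absorbs the zeroth-order term into the gradient term, producing the local area-stability inequality $\int|A|^2f^2 \leq 2\int|\nabla f|^2$ on $B_r(p)\cap\reg V$ that Wickramasekera requires. Covering $\supp V$ by such small balls and applying \cite{wick} in each gives the codimension-$7$ regularity, with the standard improvements in low dimensions.

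The main obstacle I expect is the technical verification that this localised stability inequality matches Wickramasekera's hypotheses exactly, in particular that the Poincar\'e absorption is valid even though $B_r(p)$ may intersect $\sing V$ (so that the test functions $f$ only live on $\reg V$, not on a smooth manifold) and that the uniformity of the constants allows a single application of the regularity theorem on each ball. This is precisely the content of \cite[Lemma 12.7]{CMgeneric}, whose proof goes through verbatim under our weaker assumption $\lambda_1(V)>-\infty$: the only property used of $\lambda_1$ is that the zeroth-order term in the stability inequality is finite, since on small scales its contribution is negligible compared with the gradient term. Once this reduction is carried out the conclusion is immediate from \cite{wick} and interior elliptic regularity, as in \cite[Proposition 12.24]{CMgeneric}.
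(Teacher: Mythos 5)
Your proposal is correct and takes essentially the same route as the paper: the paper also reduces to Wickramasekera's regularity theorem via \cite[Proposition 12.24]{CMgeneric}, observing that \cite[Lemma 12.7]{CMgeneric} goes through for any finite lower bound on $\lambda_1$ precisely because the $\int \phi^2\rho$ term in the stability inequality can be made negligible on small balls via a Poincar\'e inequality, exactly the localisation-and-absorption step you spell out.
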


\section{Gaussian area functionals on cones}
\label{sec:Fcone}

In this section we consider integral $(n-1)$-varifolds $W$ in $\mathbf{S}^n$. Specifically, we will study the Gaussian areas of their cones $V=C(W)$, which by dilation invariance satisfy \begin{equation}\label{eq:coneFdilation} F_{x_0,t_0}(C(W)) = F_{\frac{x_0}{\sqrt{t_0}},1}(C(W)).\end{equation} As such, it will often be enough to consider centres $x_0\in\mathbf{R}^{n+1}$, with fixed scale $t_0=1$. The main goal is to provide variation formulae for the Gaussian areas $F_{x_0,t_0}(W)$ by treating them as functionals on the link $W$; note that the formulae in Section \ref{sec:variations} do not apply directly since the variations are noncompact. This will give us the means to determine the homogenous $F$-stability of a stationary cone.

Since our focus is on the link, in this section $y^T$ will refer to the projection to the (approximate) tangent space $T_p W$ at a point $p\in \supp W \subset \mathbf{S}^n$, so that $y\in\mathbf{R}^{n+1}$ decomposes as \begin{equation}\label{eq:ydecomp} y = y^T  + \langle y,p\rangle p + y^\perp.\end{equation} Here $y^\perp$ denotes the component orthogonal to $T_pW$ in $T_p \mathbf{S}^n$, which is equivalent to the component orthogonal to $T_p C(W)$ in $\mathbf{R}^{n+1}$, and is given by $y^\perp = \langle y,\wt{\nu}\rangle \wt{\nu}$ on the regular part.

\begin{lemma}
Let $W$ be an integral $(n-1)$-varifold in $\mathbf{S}^n$, and suppose that the cone $C(W)$ has finite entropy. Then we have 
\begin{equation}
F_{x_0,1}(C(W))= (4\pi)^{-\frac{n}{2}} \e^{-|x_0|^2/4} \int K_{n-1}(\langle p,x_0\rangle) \d\mu_W(p),
\end{equation}
where \begin{equation}K_n(t) = \e^{t^2/4}I_n(t)\end{equation} is the sequence of real analytic functions defined by the recurrence relation
\begin{equation}
I_n(t) = t I_{n-1}(t) + 2(n-1)I_{n-2}(t),
\end{equation}
for $n\geq 2$, and
\begin{equation}
I_0(t) = \sqrt{\pi}(1+\erf(t/2)) , \qquad I_1(t) = tI_0(t) + 2\e^{-t^2/4}.
\end{equation}
\end{lemma}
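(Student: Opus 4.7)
The plan is to write points of the cone $C(W)$ in polar coordinates, reduce the Gaussian integral to a one-variable radial integral that depends on $p$ only through the inner product $\langle p,x_0\rangle$, and then derive the stated recurrence by integration by parts.

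First I would use the fact that $C(W)$ is the cone over $W$ to express its mass measure as $\d\mu_{C(W)}(rp) = r^{n-1}\,\d r\,\d\mu_W(p)$ for $r\geq 0$ and $p\in\supp W\subset \mathbf{S}^n$; this is the standard disintegration of a dilation-invariant integral varifold (and can be verified directly from the definition of $C(W)$). Using $|p|=1$, we have $|rp - x_0|^2 = r^2 - 2r\langle p,x_0\rangle + |x_0|^2$, so
\begin{equation}
F_{x_0,1}(C(W)) = (4\pi)^{-n/2}\,\e^{-|x_0|^2/4}\int_W \biggl(\int_0^\infty r^{n-1}\e^{-r^2/4 + r\langle p,x_0\rangle/2}\,\d r\biggr)\d\mu_W(p).
\end{equation}
The assumption $\Lambda(C(W))<\infty$ (together with Lemma \ref{lem:entropymax}) allows Fubini to be applied. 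This identifies
\begin{equation}
K_{n-1}(\langle p,x_0\rangle) \;=\; \int_0^\infty r^{n-1}\e^{-r^2/4 + rt/2}\,\d r\Big|_{t=\langle p,x_0\rangle},
\end{equation}
so it remains to verify that the family $J_n(t):=\int_0^\infty r^n\e^{-r^2/4 + rt/2}\,\d r$ satisfies $J_n(t)=\e^{t^2/4} I_n(t)$ with $I_n$ defined by the stated recurrence and initial data.

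For the recurrence, I would integrate the identity
\begin{equation}
\frac{d}{dr}\bigl[-2 r^{n-1}\e^{-r^2/4+rt/2}\bigr] \;=\; (r-t)r^{n-1}\e^{-r^2/4+rt/2} - 2(n-1)r^{n-2}\e^{-r^2/4+rt/2}
\end{equation}
from $0$ to $\infty$ for $n\geq 2$; the boundary terms vanish, giving $J_n(t) = t J_{n-1}(t) + 2(n-1)J_{n-2}(t)$. For the base cases, $J_0(t)$ is computed by completing the square $-r^2/4 + rt/2 = -(r-t)^2/4 + t^2/4$ and the substitution $v=(r-t)/2$, producing $J_0(t) = \e^{t^2/4}\sqrt{\pi}(1+\erf(t/2))$; and $J_1(t) = tJ_0(t) + 2$ follows by integrating the same exact derivative with $n=1$ (the boundary term at $r=0$ contributes $+2$). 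Setting $I_n(t):=\e^{-t^2/4}J_n(t)$ converts these identities into precisely the stated recurrence and initial conditions $I_0(t)=\sqrt{\pi}(1+\erf(t/2))$, $I_1(t) = tI_0(t) + 2\e^{-t^2/4}$, and real-analyticity of each $I_n$ is inherited inductively from $\erf$ and the polynomial coefficients.

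There is no substantive obstacle: the computation is essentially a polar-coordinate integration followed by a one-variable integration by parts, and the only thing that requires even minor care is justifying the use of Fubini and the vanishing of boundary terms, both of which are immediate from the exponential decay once the entropy of $C(W)$ is known to be finite.
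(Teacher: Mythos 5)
Your proposal is correct and follows essentially the same route as the paper: polar coordinates on the cone, completing the square (which you phrase as factoring out $\e^{-|x_0|^2/4}$), and a one-variable integration by parts to obtain the recurrence, with the $n=1$ boundary contribution supplying the extra $2\e^{-t^2/4}$. The only cosmetic difference is that you work with $J_n = \e^{t^2/4}I_n = K_n$ and the exponential $\e^{-r^2/4+rt/2}$, whereas the paper works with $I_n$ and $\e^{-(r-t)^2/4}$ directly; these are trivially the same computation.
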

\begin{proof}
Set $V=C(W)$. Using polar coordinates $r>0$, $p\in \mathbf{S}^n$ for $x=rp\in\mathbf{R}^{n+1}$, we have that 
\begin{equation}
\int_{\mathbf{R}^{n+1}} \e^{-\frac{|x-x_0|^2}{4}} \d\mu_V(x)= \int_{\mathbf{S}^n} \left(\int_0^\infty \e^{-\frac{|rp-x_0|^2}{4}} r^{n-1} \d r\right) \d\mu_W(p).
\end{equation}
Completing the square we have $|rp-x_0|^2 = (r-\langle p,x_0\rangle)^2 + |x_0|^2 - \langle p,x_0\rangle^2$, where we have used that $|p|^2=1$. Setting $t=\langle p,x_0\rangle$, it remains to compute the integrals
\begin{equation}
I_n(t) = \int_0^\infty \e^{-\frac{(r-t)^2}{4}}r^n \d r
\end{equation}
for each $n$. First, for $n=0$ by definition of the error function we have \begin{equation}I_0(t) = \int_{-t}^\infty \e^{-u^2/4}\d u = \sqrt{\pi}(1+\erf(t/2)).\end{equation} 
For $n\geq 1$ we have
\begin{eqnarray}
I_n(t)&=& \int_0^\infty (r-t)\e^{-\frac{(r-t)^2}{4}} r^{n-1}\d r +t \int_0^\infty \e^{-\frac{(r-t)^2}{4}}r^{n-1}\d r \\&=&\nonumber \left. -2r^{n-1} \e^{-\frac{(r-t)^2}{4}} \right|_0^\infty +2(n-1)I_{n-2}(t) + tI_{n-1}(t),
\end{eqnarray}
where we have used integration by parts in the second equality. For $n\geq 2$ the first term vanishes whilst for $n=1$ it evaluates to $2\e^{-t^2/4}$, which gives the result.
\end{proof}

\subsection{Variations}
\label{sec:varcone}

For an integral $(n-1)$-varifold $W$ in $\mathbf{S}^n$, we consider normal variations $W_s$ of $W$ in $\mathbf{S}^n$ generated by smooth, compactly supported vector fields $X$ on $\mathbf{S}^n$, so that $X(p) \perp T_p\reg W_s$ for any $s$ and any $p\in \reg W_s$. If $W$ is orientable, then we will write $X|_{\reg W_s} = \phi_s \wt{\nu}_s$. Recall that $\wt{\nu}$ and $\wt{H}$ denote the normal and mean curvature of a hypersurface $M^{n-1}$ in $\mathbf{S}^n$, respectively.

A direct computation yields the first variation formula for the $F$-functional on cones:

\begin{lemma}[First variation formula]
\label{lem:1stvarcone}
Let $W$ be an orientable integral $(n-1)$-varifold in $\mathbf{S}^n$. Let $W_s$ be a normal variation of $W$ in $\mathbf{S}^{n}$ generated by $X$, compactly supported away from $\sing W$. Write $X|_{\reg W} = \phi_s \wt{\nu}_s$ with $\phi=\phi_0$. If $x_s$ is a variation of $x_0$ with $x'_0=y$, then $\pr_s|_{s=0}(F_{x_s,1}(C(W_s)))$ is given by 
\begin{equation}
\frac{\e^{-|x_0|^2/4}}{(4\pi)^\frac{n}{2}} \int \left(\phi \wt{H} K_{n-1}(t) -\frac{1}{2}\langle x_0,y\rangle K_{n-1}(t) +( \langle y,p\rangle + \langle x_0,\nu\rangle \phi )K_{n-1}'(t)\right) \d\mu_W(p),
\end{equation}
where as before we have written $t=\langle p,x_0\rangle$ for convenience.
\end{lemma}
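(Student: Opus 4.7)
The plan is to differentiate the closed-form expression
\begin{equation*}
F_{x_s,1}(C(W_s)) = \frac{\e^{-|x_s|^2/4}}{(4\pi)^{n/2}} \int K_{n-1}(\langle p, x_s\rangle)\, \d\mu_{W_s}(p)
\end{equation*}
from the preceding lemma at $s=0$. By the product rule, the derivative splits into three contributions: one from the Gaussian prefactor $\e^{-|x_s|^2/4}$, one from the $x_s$-dependence of the weight $K_{n-1}(\langle p, x_s\rangle)$, and one from the varifold variation $\mu_{W_s}$ itself. I will handle each in turn.

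The first two contributions are immediate calculus. Differentiating the Gaussian prefactor yields the factor $-\tfrac{1}{2}\langle x_0, y\rangle$ multiplying the unchanged integrand, which accounts for the $-\tfrac{1}{2}\langle x_0, y\rangle K_{n-1}(t)$ term. Differentiating $K_{n-1}(\langle p, x_s\rangle)$ in the center variable by the chain rule yields $K'_{n-1}(t)\langle p, y\rangle$, which accounts for the $\langle y, p\rangle K'_{n-1}(t)$ summand.

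For the third contribution, I hold $f(p) = K_{n-1}(\langle p, x_0\rangle)$ fixed and apply the weighted first variation formula on the smooth ambient $\mathbf{S}^n$,
\begin{equation*}
\frac{d}{ds}\Big|_{s=0} \int f\, \d\mu_{W_s} = \int (f\, \div_W X + X(f))\, \d\mu_W,
\end{equation*}
which is valid because $X$ is $C^2$ and compactly supported away from $\sing W$ on the smooth manifold $\mathbf{S}^n$. Since $X = \phi \wt{\nu}$ is normal to $\reg W$ in $\mathbf{S}^n$, the tangential divergence collapses to $\div_W X = \phi \wt{H}$ by the paper's convention $\wt{H} = \div_M \wt{\nu}$. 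For the transport term, I compute the Euclidean gradient $Df(p) = K'_{n-1}(\langle p, x_0\rangle)\, x_0$; since $\wt{\nu} \in T_p\mathbf{S}^n$, pairing $Df$ with $X$ in $\mathbf{R}^{n+1}$ agrees with $X(f)$ computed intrinsically on $\mathbf{S}^n$, giving $X(f) = \phi\, \langle x_0, \wt{\nu}\rangle\, K'_{n-1}(t)$.

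Assembling the three pieces and regrouping by $K_{n-1}(t)$ and $K'_{n-1}(t)$ produces the stated identity. I do not expect any genuine obstacle here; the only point worth emphasizing is that the homogeneous variation of $C(W)$ in $\mathbf{R}^{n+1}$ induced by $X$ has non-compact support, which prevents direct use of the compactly supported second variation machinery from Section \ref{sec:variations}. Working from the $K_{n-1}$ integral representation bypasses this entirely, reducing every integral to one over the compact link manifold $\mathbf{S}^n$ against a variation field that is compactly supported in $\reg W$, so no cutoff argument or analysis near the cone vertex is required.
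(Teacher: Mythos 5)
Your proof is correct and is precisely the computation the paper intends when it says ``a direct computation yields the first variation formula'': differentiate the $K_{n-1}$ integral representation from the preceding lemma, split by the product rule into the prefactor, kernel-argument, and varifold-transport contributions, and use the transport identity $\tfrac{d}{ds}\big|_{s=0}\int f\,\d\mu_{W_s} = \int (Xf + f\div_W X)\,\d\mu_W$ together with $\div_W X = \phi\wt H$ and $Xf = \phi\langle x_0,\wt\nu\rangle K'_{n-1}(t)$. Your observation that working on the link via $K_{n-1}$ avoids the noncompactness of the homogeneous variation on $C(W)$ is exactly why this representation was introduced.
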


\begin{lemma}[Second variation formula]
\label{lem:2ndvarcone}
Let $W$ be an orientable integral $(n-1)$-varifold in $\mathbf{S}^n$. Let $W_s$ be a normal variation of $W$ in $\mathbf{S}^{n}$ generated by $X$, compactly supported away from $\sing W$. Write $X|_{\reg W} = \phi_s \wt{\nu}_s$ with $\phi=\phi_0$ and $\phi'=\pr_s|_{s=0}\phi_s$. Also let $x_s$ be a variation of $x_0$ with $x'_0=y$, $x''_0=y'$. Then $\pr_{s}^2|_{s=0}(F_{x_s,1}(C(W_s)))$ is given by 
\begin{eqnarray}
\nonumber \frac{\e^{-\frac{|x_0|^2}{4}}}{(4\pi)^\frac{n}{2}}\int \Big[ &  -(\phi \wt{L} \phi) K_{n-1}(t) -\phi\langle x_0,\nabla \phi\rangle K_{n-1}'(t) +2\langle y,\wt{\nu}\rangle \phi K_{n-1}'(t) - \frac{1}{2}|y|^2 K_{n-1}(t) 
\\\nonumber &+ \left(\phi \wt{H} - \frac{1}{2}\langle x_0,y\rangle\right)^2 K_{n-1}(t) + \left(\langle y,p\rangle + \langle x_0,\wt{\nu}\rangle\phi\right)^2 K_{n-1}''(t) 
\\\nonumber&+ 2\left(\phi \wt{H} - \frac{1}{2}\langle x_0,y\rangle\right)\left(\langle y,p\rangle +\langle x_0,\wt{\nu}\rangle\phi\right) K_{n-1}'(t)
\\\nonumber&+  \phi'\left( \wt{H} K_{n-1}(t) +\langle x_0,\wt{\nu}\rangle  K_{n-1}'(t)\right) \\&-\frac{1}{2}\langle x_0,y'\rangle K_{n-1}(t) +\langle p,y'\rangle K_{n-1}'(t)\Big] \d\mu_W(p),
\end{eqnarray}
where again we have written $t=\langle p,x_0\rangle$ for convenience, and $K_{n-1}',K_{n-1}''$ are just the usual derivatives of the single-variable function $K_{n-1}$ (as opposed to the variational derivative). 
\end{lemma}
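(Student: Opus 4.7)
The formula is proved by direct second differentiation of the expression in Lemma \ref{lem:1stvarcone}. Following the decomposition used there, I would write
$$F_{x_s,1}(C(W_s)) = A(s) B(s), \quad A(s) = (4\pi)^{-n/2}\e^{-|x_s|^2/4}, \quad B(s) = \int K_{n-1}(t_s) \d\mu_{W_s},$$
where $t_s = \langle p, x_s\rangle$ is viewed as a function on $\mathbf{S}^n$ pulled back to $W_s$ along the flow $\Phi_s$ of $X$. The plan is to compute $G''(0) = A''(0)B(0) + 2A'(0)B'(0) + A(0)B''(0)$. The $A$-factor is handled by elementary calculus: $A'(0) = -\tfrac{1}{2}\langle x_0, y\rangle A(0)$ and $A''(0) = A(0)\bigl[\tfrac{1}{4}\langle x_0, y\rangle^2 - \tfrac{1}{2}|y|^2 - \tfrac{1}{2}\langle x_0, y'\rangle\bigr]$, while $B'(0)$ is immediate from Lemma \ref{lem:1stvarcone}.

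The main work is the computation of $B''(0)$. I would pull back along $\Phi_s$, writing
$$B(s) = \int_W K_{n-1}(t_s(p_0)) J(p_0, s) \d\mu_W(p_0)$$
with $t_s(p_0) = \langle\Phi_s(p_0), x_s\rangle$ and Jacobian $J(p_0, s)$, so that $B''(0)$ becomes an integral of mixed $K_{n-1}$-derivatives, $J$-derivatives, and $t_s$-derivatives. The key identities are
$$\pr_s J|_{s=0} = \phi \wt{H}, \qquad \pr_s^2 J|_{s=0} = \phi^2 \wt{H}^2 + \phi'\wt{H} - \phi \wt{L}\phi,$$
the first from $\pr_s J = J\div_{W_s} X$ and the second from a further application of the normal variation formula $\pr_s \wt{H}_s|_{s=0} = -\wt{L}\phi$. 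For the $K_{n-1}(t_s)$ factor, the chain rule yields $\pr_s t_s|_{s=0} = \phi\langle\wt{\nu}, x_0\rangle + \langle p, y\rangle$, and an analogous second-order computation produces the $K''_{n-1}(t)$ contribution.

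The final step is to collect and group terms by the factor $K_{n-1}^{(i)}(t)$. For instance, the $K_{n-1}(t)$ coefficient receives $-\phi\wt{L}\phi + \phi'\wt{H} + \phi^2\wt{H}^2$ from $A(0)B''(0)$, $\tfrac{1}{4}\langle x_0, y\rangle^2 - \tfrac{1}{2}|y|^2 - \tfrac{1}{2}\langle x_0, y'\rangle$ from $A''(0)B(0)$, and $-\phi\wt{H}\langle x_0, y\rangle$ from $2A'(0)B'(0)$; among these, $\phi^2\wt{H}^2$, $-\phi\wt{H}\langle x_0, y\rangle$, and $\tfrac{1}{4}\langle x_0, y\rangle^2$ combine into the perfect square $(\phi\wt{H} - \tfrac{1}{2}\langle x_0, y\rangle)^2$ of the statement. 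The $K'_{n-1}(t)$ and $K''_{n-1}(t)$ coefficients assemble analogously, the latter producing $(\langle y, p\rangle + \langle x_0, \wt{\nu}\rangle\phi)^2$ from $K''_{n-1}(t_s)(\pr_s t_s)^2$. The main obstacle is purely organisational: the product rule applied to the several $s$-dependent factors produces many cross terms, and some care is required to correctly group them into the compact form displayed.
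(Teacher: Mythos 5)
Your proposal is correct and takes essentially the same route as the paper: a direct second differentiation of $F_{x_s,1}(C(W_s))$ via the pullback along the flow, using exactly the identities $\pr_s(\d\mu_{W_s}) = \phi_s\wt{H}_s\,\d\mu_{W_s}$, $\wt{\nu}' = -\nabla\phi$, and $\wt{H}' = -\wt{L}\phi$ that the paper's one-line proof invokes. The organisational device of splitting $F=A(s)B(s)$ and computing $A''B + 2A'B' + AB''$ is a slightly different way of bookkeeping than re-differentiating the first variation formula wholesale, but the underlying calculus and the sources of each term in the final expression are identical; in particular your analysis of the Jacobian correctly produces $-\phi\wt{L}\phi + \phi'\wt{H} + \phi^2\wt{H}^2$, and the chain-rule expansion of $\pr_s^2 t_s$ is what yields the $-\phi\langle x_0,\nabla\phi\rangle$, $\phi'\langle x_0,\wt{\nu}\rangle$, $2\phi\langle y,\wt{\nu}\rangle$, and $\langle p,y'\rangle$ contributions to the $K_{n-1}'$ coefficient.
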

\begin{proof}
The proof is a direct calculation by differentiating the first variation formula, using that on $M=\reg W$ we have $\wt{\nu}' = -\nabla \phi$ and that $\wt{H}'$ is given by the Jacobi operator, \begin{equation}\wt{H}'= - \Lap_M\phi -|\wt{A}|^2\phi -(n-1)\phi = -\wt{L}_M \phi\end{equation} for hypersurfaces in $\mathbf{S}^{n}$ (see for instance \cite{huiskenpolden}). 
\end{proof}

We will now specialise to the case of a critical point, but first we need some integral identities for minimal hypersurfaces in $\mathbf{S}^n$. 

\begin{lemma}
\label{lem:sphibp}
If $W$ is a stationary integral $(n-1)$-varifold in $\mathbf{S}^n$ then for any fixed vector $y \in \mathbf{R}^{n+1}$ we have
\begin{equation}
\int \langle y,p\rangle\d\mu_W(p) =0,
\end{equation}
\begin{equation}
 \int |y^T|^2 \d\mu_W = (n-1)\int \langle y,p\rangle^2 \d\mu_W(p).
\end{equation}
\end{lemma}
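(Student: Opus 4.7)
The plan is to derive both identities by testing the first variation formula for $W$ against carefully chosen ambient vector fields. As $W$ is stationary in $\mathbf{S}^n$, its generalised mean curvature as an $(n-1)$-varifold in $\mathbf{R}^{n+1}$ is $\vec{H}(p) = -(n-1)p$, so (\ref{eq:genmeancurv}) specialises to
\begin{equation}
\int \div_W X \, \d\mu_W = (n-1)\int \langle X, p\rangle \, \d\mu_W
\end{equation}
for any smooth $X$ on $\mathbf{R}^{n+1}$; compact support is automatic after cutoff since $\supp W \subset \mathbf{S}^n$ is bounded.

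For the first identity, the natural test field is the constant $X \equiv y$. Since $y$ is parallel with respect to the Euclidean connection, its tangential divergence along any approximate tangent plane vanishes, and so the left hand side of the displayed identity is zero, giving the first claim at once.

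For the second identity, I would test against the quadratic field $X(p) = \langle y, p\rangle y$. At $\mu_W$-a.e.\ point $p$, choosing an orthonormal basis $\{E_i\}_{i=1}^{n-1}$ of $T_p W$, the computation $\ol{\nabla}_{E_i} X = \langle y, E_i\rangle y$ identifies $\div_W X$ with $\sum_i \langle y, E_i\rangle^2 = |y^T|^2$, since by the convention of this section $y^T$ is the projection of $y$ onto $T_p W$. On the right hand side, $\langle X(p), p\rangle = \langle y, p\rangle^2$, and substituting yields precisely the stated relation.

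There is no genuine obstacle here: once the correct test fields are identified, both identities are immediate consequences of the first variation applied to the constant-mean-curvature data $\vec{H} = -(n-1)p$. The only mild point to justify is the extension of (\ref{eq:genmeancurv}) to the non-compactly supported fields above, which is trivial by multiplying by a smooth ambient cutoff equal to $1$ on a neighbourhood of $\mathbf{S}^n$.
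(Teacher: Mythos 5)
Your proof is correct and takes precisely the same route as the paper: apply the first variation identity (\ref{eq:genmeancurv}) to $X=y$ and $X=\langle y,x\rangle y$, using $\vec{H}(p)=-(n-1)p$. The cutoff remark is a fine (if unnecessary, since the first variation for varifolds in $\mathbf{S}^n$ is already localised to the compact support) point of rigour.
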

\begin{proof}
We apply (\ref{eq:genmeancurv}) to certain ambient vector fields $X$, recalling that a stationary varifold in $\mathbf{S}^n$ has generalised mean curvature in $\mathbf{R}^{n+1}$ given by $\vec{H}(p)= -(n-1)p$.

For the first claim, simply take $X=y$, so that $\div_W X=0$. 

For the second claim, take $X= \langle y,x\rangle y$, then we have $\div_W X = \langle y^T,y\rangle = |y^T|^2$ and $\langle p,X(p)\rangle = \langle y,p\rangle^2$. 
\end{proof}

\begin{proposition}[Second variation at a critical point]
\label{lem:2ndvarcritcone}
Let $W$ be an orientable stationary integral $(n-1)$-varifold in $\mathbf{S}^n$. Let $W_s$ be a normal variation of $W$ in $\mathbf{S}^{n}$ generated by $X$, compactly supported away from $\sing W$. Write $X|_{\reg W} = \phi_s \wt{\nu}_s$ with $\phi=\phi_0$. Also let $x_s$ be a variation of $x_0=0$ with $x'_0=y$. Then $\pr_{s}^2|_{s=0} (F_{x_s,1}(C(W_s)))$ is given by
\begin{equation}
\label{eq:2ndvarcritcone}
\frac{1}{2}\pi^{-\frac{n}{2}}  \Gamma\left(\frac{n}{2}\right) \int \left( -\phi \wt{L} \phi +2\frac{\Gamma(\frac{1+n}{2})}{\Gamma(\frac{n}{2})} \phi\langle y,\wt{\nu}\rangle - \frac{1}{2}|y^\perp|^2 \right) \d\mu_W(p)
\end{equation}
\end{proposition}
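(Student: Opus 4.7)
The plan is to apply Lemma~\ref{lem:2ndvarcone} with $x_0 = 0$ and systematically eliminate the many terms that vanish at a critical point, then express what survives in the desired form using the stationarity identities of Lemma~\ref{lem:sphibp}.

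First I would set $t = \langle p, x_0\rangle = 0$ and $\e^{-|x_0|^2/4} = 1$ throughout the expression in Lemma~\ref{lem:2ndvarcone}. Stationarity of $W$ gives $\wt{H} = 0$ on $\reg W$, which annihilates every term containing $\wt{H}$; in particular the $(\phi \wt{H} - \frac{1}{2}\langle x_0, y\rangle)^2$ term, the cross term with $\langle y, p\rangle$, and the entire $\phi'$-row all vanish (for the $\phi'$-row, the residual $\langle x_0, \wt\nu\rangle K_{n-1}'(0)$ piece also drops because $x_0 = 0$, which is why $\phi'$ need not enter the final formula). Similarly, among the $y'$-contributions the term $-\frac{1}{2}\langle x_0, y'\rangle K_{n-1}(0)$ vanishes by $x_0 = 0$, while $\int \langle p, y'\rangle K_{n-1}'(0)\,d\mu_W$ vanishes because Lemma~\ref{lem:sphibp} gives $\int \langle y', p\rangle \,d\mu_W = 0$ for the fixed vector $y'$. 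The $-\phi\langle x_0, \nabla\phi\rangle K_{n-1}'(0)$ term also drops since $x_0 = 0$.

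Next I would evaluate the constants $K_{n-1}(0)$, $K_{n-1}'(0)$, $K_{n-1}''(0)$. A change of variables $u = r^2/4$ gives $K_{n-1}(0) = I_{n-1}(0) = 2^{n-1}\Gamma(n/2)$, which combined with the prefactor $(4\pi)^{-n/2}$ produces exactly $\frac{1}{2}\pi^{-n/2}\Gamma(n/2)$ and confirms the coefficients of $-\phi\wt{L}\phi$ and $-\frac{1}{2}|y|^2$. Differentiating the definition $K_n(t) = \e^{t^2/4}I_n(t)$ and using $I_n'(t) = n I_{n-1}(t)$ (which follows from an integration by parts on the defining integral) yields the identities $K_n'(t) = \frac{t}{2}K_n(t) + nK_{n-1}(t)$ and $K_n''(0) = \frac{1}{2}K_n(0) + n(n-1)K_{n-2}(0)$. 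A short gamma-function manipulation using $\Gamma((n+1)/2) = \frac{n-1}{2}\Gamma((n-1)/2)$ then gives $K_{n-1}'(0)/K_{n-1}(0) = \Gamma((n+1)/2)/\Gamma(n/2)$ and $K_{n-1}''(0)/K_{n-1}(0) = n/2$, matching the coefficient of $\phi\langle y, \wt\nu\rangle$ and producing a coefficient $n/2$ in front of $\langle y, p\rangle^2$.

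At this stage what remains under the integral, after pulling out the prefactor $\frac{1}{2}\pi^{-n/2}\Gamma(n/2)$, is
$$-\phi\wt{L}\phi + 2\frac{\Gamma((n+1)/2)}{\Gamma(n/2)} \phi\langle y, \wt\nu\rangle - \tfrac{1}{2}|y|^2 + \tfrac{n}{2}\langle y, p\rangle^2.$$
The last step is the orthogonal decomposition $|y|^2 = |y^T|^2 + \langle y,p\rangle^2 + |y^\perp|^2$ from (\ref{eq:ydecomp}), which transforms the two rightmost terms into $-\frac{1}{2}|y^T|^2 + \frac{n-1}{2}\langle y,p\rangle^2 - \frac{1}{2}|y^\perp|^2$. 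Invoking the second identity of Lemma~\ref{lem:sphibp}, $\int |y^T|^2\,d\mu_W = (n-1)\int \langle y,p\rangle^2\,d\mu_W$, exactly cancels the first two of these after integration, leaving only $-\frac{1}{2}\int|y^\perp|^2\,d\mu_W$. This yields (\ref{eq:2ndvarcritcone}). The calculation is entirely mechanical once the vanishing terms are identified; the only subtlety is the bookkeeping of gamma factors and the use of both stationarity identities to collapse the $|y|^2$ and $\langle y,p\rangle^2$ contributions into the normal component $|y^\perp|^2$.
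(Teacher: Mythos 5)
Your proof is correct and follows essentially the same route as the paper: plug $x_0=0$ and $\wt{H}=0$ into Lemma~\ref{lem:2ndvarcone}, evaluate $K_{n-1}(0)$, $K'_{n-1}(0)$, $K''_{n-1}(0)$, and use both identities of Lemma~\ref{lem:sphibp} together with the decomposition $|y|^2 = |y^T|^2 + \langle y,p\rangle^2 + |y^\perp|^2$ to collapse the remaining $y$-terms into $-\frac{1}{2}|y^\perp|^2$. The only (harmless) deviation is that you derive the special values via the clean identity $I_n'(t)=nI_{n-1}(t)$ from integration by parts, whereas the paper evaluates them directly from the recurrence; both computations yield the same constants.
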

\begin{proof}
Using the recurrence for $K_{n-1}$ one may verify the special values $K_{n-1}(0)=2^{n-1}\Gamma(\frac{n}{2})$, $K_{n-1}'(0) = 2^{n-1} \Gamma(\frac{1+n}{2})$ and $K''_{n-1}(0)=2^{n-2}n\Gamma(\frac{n}{2})$. Plugging $x_0=0$ and $\wt{H}=0$ into Lemma \ref{lem:2ndvarcone}, we get that $\pr_{s}^2|_{s=0}(F_{x_0,1}(C(W_s)))$ is given by 
\begin{eqnarray}
\frac{1}{2}\pi^{-\frac{n}{2}}\Gamma\left(\frac{n}{2}\right) \int \Big[& -\phi \wt{L} \phi +2\frac{\Gamma(\frac{1+n}{2})}{\Gamma(\frac{n}{2})} \phi\langle y,\wt{\nu}\rangle  -\frac{1}{2}|y|^2 \\& \nonumber+ \frac{n}{2} \langle y,p\rangle^2 + \frac{\Gamma(\frac{1+n}{2})}{\Gamma(\frac{n}{2})} \langle y',p\rangle  \Big] \d\mu_W(p) ,
\end{eqnarray}
where $y'=x''_0$. Using Lemma \ref{lem:sphibp} to handle the last three terms completes the proof, recalling that according to the decomposition (\ref{eq:ydecomp}) we have $|y|^2 = |y^T|^2 + \langle y,p\rangle^2 + |y^\perp|^2$.
\end{proof}

\begin{remark}
\label{rmk:2ndvarcritcone}
If $V=C(W)$ is a stationary cone then, working in polar coordinates $r=|x|$ on the regular part $\Sigma = \reg C(W)$, the stability operator $L_\Sigma$ has the decomposition \begin{equation}
\label{eq:operatordecomp}
Lf = r^{-2}\Lap_M f + \frac{n-1}{r} \pr_r f + \pr_r^2 f - \frac{r}{2}\pr_r f + \frac{|\wt{A}|^2}{r^2}f + \frac{1}{2}f = r^{-2}(\wt{L}_M-(n-1)+L_1)f,
\end{equation}
where \begin{equation}L_1 = r^2\pr_r^2 + (n-1)r\pr_r - \frac{r^3}{2}\pr_r + \frac{r^2}{2}.\end{equation} Noting that $L_1 r = (n-1)r$, and using the evaluation of the special integrals $I_n(0)$, it follows that the integral over the cone $C(W)$
\begin{equation}
\int \left(-f Lf + f\langle y,\nu\rangle - \frac{|y^\perp|^2}{2}\right)\rho \d\mu_{C(W)}
\end{equation}
coincides with (\ref{eq:2ndvarcritcone}) if we set $f(x) = r \phi(\frac{x}{r})$. This shows in particular that the second variation formula Proposition \ref{lem:2ndvarcrit} is valid for homogenous variations of a stationary cone.
\end{remark}

We record the following estimate for the coefficient of the middle term of (\ref{eq:2ndvarcritcone}). 

\begin{lemma}
\label{lem:gamma}
For any integer $n\geq 2$ we have \begin{equation}  \frac{\Gamma(\frac{1+n}{2})^2}{\Gamma(\frac{n}{2})^2} <n-1.\end{equation}
\end{lemma}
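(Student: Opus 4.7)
The plan is to use the (strict) log-convexity of the Gamma function, which is an immediate consequence of the Cauchy--Schwarz inequality applied to the standard integral representation. Writing $\Gamma(\frac{n+1}{2}) = \int_0^\infty t^{(n-1)/2} e^{-t}\, dt$ and splitting the integrand as the product $(t^{(n-2)/2}e^{-t})^{1/2}(t^{n/2}e^{-t})^{1/2}$, Cauchy--Schwarz immediately gives
\begin{equation*}
\Gamma\!\left(\tfrac{n+1}{2}\right)^2 \;\leq\; \Gamma\!\left(\tfrac{n}{2}\right) \Gamma\!\left(\tfrac{n+2}{2}\right) \;=\; \tfrac{n}{2}\, \Gamma\!\left(\tfrac{n}{2}\right)^2,
\end{equation*}
where the last equality uses the functional equation $\Gamma(x+1)=x\Gamma(x)$. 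Moreover the two factors $t^{(n-2)/4}$ and $t^{n/4}$ are not proportional on $(0,\infty)$, so the Cauchy--Schwarz inequality is strict.

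Dividing by $\Gamma(\tfrac{n}{2})^2$ therefore yields
\begin{equation*}
\frac{\Gamma\!\left(\tfrac{n+1}{2}\right)^2}{\Gamma\!\left(\tfrac{n}{2}\right)^2} \;<\; \frac{n}{2}.
\end{equation*}
For every integer $n\geq 2$ one has $\tfrac{n}{2}\leq n-1$, with equality precisely at $n=2$; combined with the strict inequality just obtained, this gives the desired bound $\frac{\Gamma(\frac{1+n}{2})^2}{\Gamma(\frac{n}{2})^2} < n-1$ in all cases.

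There is no real obstacle in this argument — the only subtle point is that the strictness of log-convexity is needed to handle the borderline dimension $n=2$, where $\tfrac{n}{2}$ and $n-1$ coincide. Alternatively, one can verify $n=2$ by hand ($\Gamma(3/2)^2/\Gamma(1)^2 = \pi/4 < 1$) and use the non-strict version of log-convexity for $n\geq 3$, but invoking strict Cauchy--Schwarz handles both cases uniformly.
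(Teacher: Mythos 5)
Your proof is correct, and it takes a genuinely different route from the paper. The paper writes $A_n = \frac{1}{n-1}\frac{\Gamma(\frac{1+n}{2})^2}{\Gamma(\frac{n}{2})^2}$, observes from the functional equation that $A_n = \frac{(n-1)(n-3)}{(n-2)^2}A_{n-2} < A_{n-2}$ for $n>3$, and checks the base cases $A_2 = \pi/4 < 1$ and $A_3 = 2/\pi < 1$ by hand. You instead invoke strict log-convexity of $\Gamma$ (via Cauchy--Schwarz on the integral representation) to get the cleaner bound $\frac{\Gamma(\frac{n+1}{2})^2}{\Gamma(\frac{n}{2})^2} < \frac{n}{2}$, and then note $\frac{n}{2}\le n-1$ for $n\geq 2$. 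Your approach is both more uniform (no parity split, no explicit base cases) and sharper: it actually establishes the stronger inequality $2\,\frac{\Gamma(\frac{1+n}{2})^2}{\Gamma(\frac{n}{2})^2} < n$, which the paper separately records in a later remark (deduced there from the asymptotics $\Gamma(\frac{1+n}{2})/\Gamma(\frac{n}{2})\sim\sqrt{n/2}$). The paper's recursion has the mild advantage of being entirely elementary-arithmetic once the Gamma functional equation is granted, but your log-convexity argument is the more conceptual one.
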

\begin{proof}
Let $A_n = \frac{1}{n-1} \frac{\Gamma(\frac{1+n}{2})^2}{\Gamma(\frac{n}{2})^2}$. By the functional equation for the gamma function, we have for all $n>3$ that $A_n = \frac{(n-1)(n-3)}{(n-2)^2} A_{n-2} < A_{n-2}$, so the lemma follows from checking that $A_2 = \frac{\Gamma(3/2)^2}{\Gamma(1)^2} = \frac{\pi}{4}<1$ and $A_3 = \frac{1}{2}\frac{\Gamma(2)^2}{\Gamma(3/2)^2} = \frac{2}{\pi}<1$. 
\end{proof}

\section{Integration on singular hypersurfaces}
\label{sec:intsing}

In this section we present some technical results that will allow us to work on the regular part of an integral varifold with small enough singular set.

\subsection{Cutoff functions}
\label{sec:cutoff}

Given an integral $n$-varifold $V$ in $\mathbf{R}^{n+1}$ satisfying $\mathcal{H}^{n-q}(\sing V)=0$, $q\geq 0$, we describe here our choice of cutoff functions (on $\mathbf{R}^{n+1}$) that will allow us to integrate around the singular set.

For any fixed $R>4$ and $\epsilon>0$, since the singular set is closed, using the definition of Hausdorff measure we may cover the compact set $\sing V \cap B_{R}$ by finitely many Euclidean balls, \begin{equation}\sing V\cap B_{R} \subset \bigcup_{i=1}^m B_{r_i}(p_i), \qquad\text{where} \qquad\sum_i r_i^{n-q} <\epsilon,\end{equation} and of course we may assume without loss of generality that $r_i<1$ for each $i$. This covering depends on $q$, $R$ and $\epsilon$, but we will suppress this dependence in the notation. 

Given such a covering, we may take smooth cutoff functions $0\leq \phi_i\leq 1$ such that $\phi_i = 1$ outside $B_{3r_i}(p_i)$ and $\phi_i=0$ inside $B_{2r_i}(p_i)$, with $|D\phi_i| \leq \frac{2}{r_i}$ in between. We will also need to cut off on large balls so we fix a cutoff function $0\leq \eta_R\leq 1$ such that $\eta_R=1$ inside $B_{R-3}$ and $\eta_R=0$ outside $B_{R-2}$, with $|D \eta_R|\leq 2$ in between. Then, we combine these cutoffs by setting $\phi_{R,\epsilon} = \inf_i ( \phi_i,\eta_R)\leq 1$, which is Lipschitz with compact support in $B_{R-1}\setminus \bigcup_{i=1}^m B_{2r_i}(p_i)$, and satisfies $|D\phi_{R,\epsilon}| \leq \sup_i (|D\phi_i|, |D\eta_R|)$. 

We will also need cutoff functions on annuli by smooth functions $0\leq \psi_i\leq \frac{2}{r_i}$ satisfying $\psi_i = \frac{2}{r_i}$ inside $B_{3r_i}(p_i) \setminus B_{2r_i}(p_i)$ and $\psi_i=0$ outside $B_{4r_i}(p_i)\setminus B_{r_i}(p_i)$, with $|D\psi_i| \leq \frac{4}{r_i^2}$ in between. We also take $0\leq \xi_R\leq 2$ such that $\xi_R =2$ inside $B_{R-2}\setminus B_{R-3}$ and $\xi_R=0$ outside $B_{R-1}\setminus B_{R-4}$, with $|D\xi_R| \leq 4$ in between. We combine these by setting $\psi_{R,\epsilon} = \sup_i(\psi_i,\xi_R)$, which is Lipschitz and satisfies $|D\psi_{R,\epsilon}| \leq \sup_i(|D\psi_i|,|D\xi_R|).$ In particular, we have \begin{equation} |D\phi_{R,\epsilon}| \leq \psi_{R,\epsilon}.\end{equation}

We will reduce the dependence to the single parameter $R$ by choosing $\epsilon = \epsilon(R)$ such that $\lim_{R\rightarrow \infty} \epsilon(R)=0$. In this setting we write more compactly $\phi_R = \phi_{R,\epsilon}$, $\psi_R=\psi_{R,\epsilon}$. 

\subsection{Integration}
\label{sec:integration}

We will conduct our analysis in the weighted $L^p$ spaces introduced in \cite{CMgeneric}. We say that a function $f$ is weighted $L^p$ on a hypersurface $\Sigma$ if it is $L^p$ with respect to the measure $\rho \d\mu_\Sigma$. That is, for $p\in (0,\infty)$ we say $f$ is weighted $L^p$ if $\|f\|_{p}:= \left(\int_\Sigma |f|^p \rho\right)^\frac{1}{p}<\infty$, and for $p=\infty$ we require $\|f\|_\infty =\sup_\Sigma |f|<\infty$. The weighted $W^{k,p}$ spaces are defined analogously. The goal of this subsection is to establish conditions under which integration by parts is justified in these spaces. 

Recall that the operator $\mathcal{L}$ is symmetric with respect to the weight $\rho$:

\begin{lemma}[\cite{CMgeneric}, Lemma 3.8]
\label{lem:compactibp}
If $\Sigma\subset \mathbf{R}^{n+1}$ is any hypersurface, $u$ is a $C^1$ function with compact support in $\Sigma$ and $v$ is a $C^2$ function, then \begin{equation} \int_\Sigma u(\mathcal{L} v) \rho = -\int_\Sigma \langle \nabla v,\nabla u\rangle \rho.\end{equation}
\end{lemma}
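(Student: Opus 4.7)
The plan is to recognize $\mathcal{L}$ as the weighted divergence form operator associated to the measure $\rho\,\d\mu_\Sigma$, so that the identity reduces to the standard integration by parts for a compactly supported tangential vector field on $\Sigma$. Concretely, the claim will follow from the single observation
\[\div_\Sigma(\rho \,\nabla v) = \rho\, \mathcal{L} v,\]
together with the product rule.

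First I would compute $\nabla^\Sigma \rho$ directly from the definition $\rho = (4\pi)^{-n/2} \e^{-|x|^2/4}$: differentiating along tangential directions gives $\nabla^\Sigma \rho = -\tfrac{1}{2} \rho\, x^T$. Since $\nabla v$ is tangential to $\Sigma$, $\langle x^T, \nabla v\rangle = \langle x,\nabla v\rangle$, and therefore
\[\div_\Sigma(\rho\,\nabla v) = \rho\, \Lap_\Sigma v + \langle \nabla^\Sigma \rho, \nabla v\rangle = \rho\bigl(\Lap_\Sigma v - \tfrac{1}{2}\langle x,\nabla v\rangle\bigr) = \rho\, \mathcal{L} v.\]

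Next, the tangential product rule yields
\[\div_\Sigma(u\rho\,\nabla v) = u\,\div_\Sigma(\rho\,\nabla v) + \rho\langle \nabla u,\nabla v\rangle = u\rho\,\mathcal{L} v + \rho\langle \nabla u,\nabla v\rangle.\]
Since $u$ has compact support in $\Sigma$, the vector field $u\rho\,\nabla v$ is a $C^1$ tangential vector field on $\Sigma$ with compact support, so the ordinary divergence theorem on $\Sigma$ (as a $C^2$ embedded hypersurface without boundary intersecting $\supp u$) gives $\int_\Sigma \div_\Sigma(u\rho\,\nabla v)\,\d\mu_\Sigma = 0$. Rearranging yields the stated identity. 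There is no essential obstacle here; the only point requiring care is that the vector field is tangential to $\Sigma$, so the mean curvature term that would otherwise appear in the first variation formula does not enter.
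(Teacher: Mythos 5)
Your proof is correct, and it is essentially the same as the argument in Colding--Minicozzi's Lemma 3.8, which this paper cites without reproducing: one writes $\mathcal{L}$ in weighted-divergence form, $\rho\,\mathcal{L}v = \div_\Sigma(\rho\nabla v)$, applies the product rule, and integrates the resulting compactly supported tangential divergence over $\Sigma$. Your closing remark is also well placed: the point is precisely that $u\rho\nabla v$ is tangential and compactly supported, so the usual divergence theorem on the abstract manifold $\Sigma$ applies directly and no mean curvature term appears.
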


In the remainder of this subsection $\Sigma^n$ will denote the regular part of an $n$-varifold $V$ in $\mathbf{R}^{n+1}$ with Euclidean volume growth. The exponential decay of the weight $\rho = (4\pi)^{-n/2} \e^{-|x|^2/4}$ then gives that any function on $\Sigma$ of polynomial growth in $|x|$ is automatically weighted $L^p$ for any $p \in (0,\infty)$. 

\begin{lemma}
\label{lem:gradconv}
Let $q>0$ and suppose that $\mathcal{H}^{n-q}(\sing V)=0$. Let $\Sigma=\reg V$, and take $\phi_R = \phi_{R,\epsilon}$ as in Section \ref{sec:cutoff}. Then we have the following gradient estimate for $\phi_R$: \begin{equation}\int_\Sigma |\nabla \phi_R|^q \rho \leq 2^q C_V(R^n\e^{-\frac{(R-3)^2}{4}} + 3^n\epsilon),\end{equation} where $C_V$ is the volume growth constant. In particular $\lim_{R\rightarrow \infty} \int_\Sigma |\nabla \phi_R|^q \rho =0$. 
\end{lemma}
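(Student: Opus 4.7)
The plan is to decompose $|\nabla \phi_R|$ according to which cutoff achieves the supremum in $\sup_i(|D\phi_i|, |D\eta_R|)$, and to estimate the two types of contributions separately. Since $\phi_R$ is Lipschitz and equals the minimum of the smooth cutoffs $\phi_1,\dots,\phi_m,\eta_R$, the gradient is supported where at least one of the functions is non-constant, i.e.\ in the union $A_R := (B_{R-2}\setminus B_{R-3}) \cup \bigcup_{i=1}^m (B_{3r_i}(p_i)\setminus B_{2r_i}(p_i))$, and obeys the pointwise bound $|\nabla \phi_R| \leq \max(|D\eta_R|, \max_i |D\phi_i|) \leq 2\max_i(1, 1/r_i)$.

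First I would handle the singular contribution. On each annulus $B_{3r_i}(p_i)\setminus B_{2r_i}(p_i)$, only $\phi_i$ can be non-constant (in the sense of being the relevant cutoff locally, after absorbing overlaps into the pointwise supremum), so $|\nabla \phi_R| \leq 2/r_i$. Euclidean volume growth gives $\mu_V(B_{3r_i}(p_i)) \leq C_V(3r_i)^n$, and the weight satisfies $\rho \leq (4\pi)^{-n/2} \leq 1$. Thus
\begin{equation}
\int_{\reg V \cap B_{3r_i}(p_i)} |\nabla \phi_R|^q \rho \,\d\mu_V \leq \left(\frac{2}{r_i}\right)^q C_V (3r_i)^n = 2^q 3^n C_V\, r_i^{n-q}.
\end{equation}
Summing over $i$ and using the covering property $\sum_i r_i^{n-q} < \epsilon$ yields a bound of $2^q 3^n C_V\epsilon$ for the singular contribution.

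Next I would estimate the contribution from the large annulus $B_{R-2}\setminus B_{R-3}$, on which $|D\eta_R|\leq 2$ and $\rho \leq (4\pi)^{-n/2}\e^{-(R-3)^2/4} \leq \e^{-(R-3)^2/4}$. Using volume growth again, $\mu_V(B_{R-2}) \leq C_V R^n$, so this contribution is bounded by $2^q C_V R^n \e^{-(R-3)^2/4}$. Adding the two bounds and noting that the overlap of the annuli only improves the inequality gives the claimed estimate. The final assertion $\lim_{R\to\infty}\int |\nabla \phi_R|^q \rho = 0$ is immediate from the choice $\epsilon=\epsilon(R)\to 0$ together with the Gaussian decay factor $R^n\e^{-(R-3)^2/4}\to 0$.

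No step presents a serious obstacle; the only mild subtlety is justifying the pointwise bound on $|\nabla \phi_R|$ for the Lipschitz infimum, but this is standard since at a.e.\ point the gradient of $\inf_i f_i$ coincides with the gradient of whichever $f_i$ attains the infimum, and away from the minimizing index the function is locally constant.
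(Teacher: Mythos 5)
Your proof follows the same route as the paper and arrives at the identical bound, so the approach is essentially correct. One small imprecision is worth flagging: your claim that $|\nabla \phi_R|\leq 2/r_i$ pointwise on the annulus $B_{3r_i}(p_i)\setminus B_{2r_i}(p_i)$ is not actually true when the covering balls overlap --- a smaller ball $B_{r_j}(p_j)$ with $r_j\ll r_i$ whose annulus lies inside $B_{3r_i}(p_i)\setminus B_{2r_i}(p_i)$ can force $|\nabla\phi_R|$ up to $2/r_j$ there. Consequently the individual inequality
\begin{equation*}
\int_{\reg V\cap B_{3r_i}(p_i)}|\nabla\phi_R|^q\rho \leq (2/r_i)^q C_V(3r_i)^n
\end{equation*}
can fail for a particular $i$; your closing remark that ``the overlap of the annuli only improves the inequality'' addresses overcounting of the domain but not this pointwise issue. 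The correct (and what the paper implicitly uses) is the \emph{summed} pointwise bound
\begin{equation*}
|\nabla\phi_R(x)|^q \;\leq\; 2^q\,\chi_{B_{R-2}\setminus B_{R-3}}(x) \;+\; \sum_{i} (2/r_i)^q\,\chi_{B_{3r_i}(p_i)\setminus B_{2r_i}(p_i)}(x),
\end{equation*}
which holds because at a.e.\ $x$ with $|\nabla\phi_R(x)|>0$ the gradient equals that of whichever cutoff attains the minimum, and that index contributes the needed term on the right. Integrating this summed bound and applying volume growth gives the stated estimate directly. After this correction your argument coincides with the paper's proof, and the final limit statement follows exactly as you say from $\epsilon(R)\to 0$ and the Gaussian decay of the $R$-term.
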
 
\begin{proof}
We have \begin{eqnarray}
\int_\Sigma |\nabla \phi_R|^{q}\rho &\leq&  \int_{\Sigma \cap B_{R-2}\setminus B_{R-3}} 2^{q} \rho  +\sum_{i=1}^m \int_{\Sigma \cap B_{3r_i}(p_i) \setminus B_{2r_i}(p_i)} \frac{2^{q}}{r_i^{q}} \\\nonumber&\leq& 2^qC_V\left( R^n\e^{-\frac{(R-3)^2}{4}}+3^n\sum_{i} r_i^{n-q}\right) \\\nonumber&\leq& 2^q C_V(R^n\e^{-\frac{(R-3)^2}{4}} + 3^n\epsilon).\end{eqnarray} 
The limit follows since we choose $\epsilon$ such that $\lim_{R\rightarrow \infty}\epsilon(R)=0$. 
\end{proof}

\begin{corollary}
\label{cor:fgrad}
Assume $\mathcal{H}^{n-q}(\sing V)=0$ for some $q$. Let $\Sigma=\reg V$ and $\phi_R$ be as above. 
\begin{enumerate}
\item Suppose that $q\geq 1$ and that $f$ is weighted $L^p$, $p = \frac{q}{q-1}$. Then \begin{equation}\lim_{R\rightarrow \infty} \int_\Sigma |f||\nabla \phi_R|\rho = 0.\end{equation} 
\item Suppose that $q\geq 2$ and that $f$ is weighted $L^p$, $p = \frac{2q}{q-2}$. Then \begin{equation}\lim_{R\rightarrow \infty} \int_\Sigma f^2 |\nabla \phi_R|^2\rho = 0.\end{equation} 

\end{enumerate}
\end{corollary}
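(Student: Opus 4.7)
The plan is to deduce both parts of Corollary \ref{cor:fgrad} as direct consequences of Lemma \ref{lem:gradconv} together with Hölder's inequality on the weighted space $L^p(\Sigma, \rho \, d\mu_V)$. The key observation is that the exponents $p$ in the hypotheses are precisely the conjugates needed to pair $|f|$ (respectively $f^2$) against $|\nabla \phi_R|$ (respectively $|\nabla \phi_R|^2$) in such a way that the remaining factor is controlled by $\int_\Sigma |\nabla \phi_R|^q \rho$, which tends to zero by Lemma \ref{lem:gradconv}.

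For part (1), I would first treat the generic case $q>1$. The exponents $p=q/(q-1)$ and $q$ are conjugate, so Hölder gives
\begin{equation*}
\int_\Sigma |f||\nabla \phi_R|\rho = \int_\Sigma \bigl(|f|\rho^{1/p}\bigr)\bigl(|\nabla \phi_R|\rho^{1/q}\bigr) \leq \|f\|_p \left(\int_\Sigma |\nabla \phi_R|^q \rho\right)^{1/q}.
\end{equation*}
The first factor is finite by the weighted $L^p$ hypothesis, and the second tends to $0$ by Lemma \ref{lem:gradconv}. For the boundary case $q=1$, we have $p=\infty$, and we simply estimate $\int_\Sigma |f||\nabla \phi_R|\rho \leq \|f\|_\infty \int_\Sigma |\nabla \phi_R|\rho$, which again vanishes in the limit by Lemma \ref{lem:gradconv}.

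For part (2), the generic case is $q>2$. Here $p/2 = q/(q-2)$ and $q/2$ are conjugate, so applying Hölder to the product $f^2 \cdot |\nabla \phi_R|^2$ against the weight $\rho$ yields
\begin{equation*}
\int_\Sigma f^2|\nabla \phi_R|^2\rho \leq \left(\int_\Sigma |f|^p \rho\right)^{2/p}\left(\int_\Sigma |\nabla \phi_R|^q \rho\right)^{2/q} = \|f\|_p^2 \left(\int_\Sigma |\nabla \phi_R|^q\rho\right)^{2/q},
\end{equation*}
which tends to $0$ as before. For the boundary case $q=2$, we have $p=\infty$ and the trivial bound $\int_\Sigma f^2|\nabla\phi_R|^2 \rho \leq \|f\|_\infty^2 \int_\Sigma |\nabla \phi_R|^2\rho$ suffices.

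There is no real obstacle here; the whole content lies in matching Hölder exponents to the gradient integrability provided by Lemma \ref{lem:gradconv}, and in dealing separately with the endpoint values of $q$ where $p=\infty$. The choices $p=q/(q-1)$ and $p=2q/(q-2)$ in the statement are exactly what is forced by this pairing.
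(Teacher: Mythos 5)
Your proof is correct and follows exactly the same route as the paper: Hölder's inequality with the conjugate exponent pairs $\left(\frac{q}{q-1},\, q\right)$ for part (1) and $\left(\frac{q}{q-2},\, \frac{q}{2}\right)$ for part (2), followed by Lemma \ref{lem:gradconv} to kill the gradient factor. Your explicit handling of the endpoint cases $q=1$ and $q=2$ (where $p=\infty$) makes more visible what the paper leaves as a one-line remark, but the argument is the same.
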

Note again that here we allow $p=\infty$. 
\begin{proof}
For (1), using H\"{o}lder's inequality, we have
\begin{equation}
\int_\Sigma |f| |\nabla \phi_R| \rho \leq \|f\|_{p}\left( \int_\Sigma |\nabla \phi_R|^{q}\rho\right)^\frac{1}{q}
\end{equation}
where $\frac{1}{p}+\frac{1}{q}=1$. 

Similarly for (2) we have
\begin{equation}
\int_\Sigma |f|^2 |\nabla \phi_R|^2 \rho \leq \|f\|_{p}^2\left( \int_\Sigma |\nabla \phi_R|^{q}\rho\right)^\frac{2}{q}
\end{equation}
where $\frac{2}{p}+\frac{2}{q}=1$. 

By supposition the weighted $L^p$-norms of $f$ are finite, so both results now follow from Lemma \ref{lem:gradconv}. 
\end{proof}

\begin{lemma}
\label{lem:ibp}
Suppose that $\mathcal{H}^{n-q}(\sing V)=0$ for some $q\geq 1 $. Further suppose that $u,v$ are $C^2$ functions on $\Sigma=\reg V$ such that $|\nabla u||\nabla v|$ and $|u\mathcal{L}v|$ are weighted $L^1$, and $|u\nabla v|$ is weighted $L^p$, $p=\frac{q}{q-1}$. Then \begin{equation} 
\int_\Sigma (u\mathcal{L}v) \rho = -\int_\Sigma \langle \nabla u,\nabla v\rangle \rho.\end{equation}
\end{lemma}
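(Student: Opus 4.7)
The plan is to approximate by compactly supported functions using the cutoffs $\phi_R = \phi_{R,\epsilon(R)}$ constructed in Section \ref{sec:cutoff}, and then pass to the limit $R\to\infty$ using the integrability hypotheses together with Corollary \ref{cor:fgrad} to kill the error term arising from the derivative of $\phi_R$. The key point that makes this work is that Euclidean volume growth holds (since $V$ has finite entropy is implicit in the setup — actually, it is not assumed here, but the cutoff construction requires only a varifold with $\mathcal{H}^{n-q}(\sing V)=0$; the volume growth enters only through Lemma \ref{lem:gradconv} and Corollary \ref{cor:fgrad}, which are the ingredients we invoke).

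First I would apply Lemma \ref{lem:compactibp} with the compactly supported test function $u\phi_R$ (note $\phi_R$ is Lipschitz with compact support in $\Sigma\setminus \sing V$, so $u\phi_R$ has compact support in $\Sigma$ and is Lipschitz; a standard mollification argument lets us apply Lemma \ref{lem:compactibp} in the Lipschitz case) and the $C^2$ function $v$. Using the product rule $\nabla(u\phi_R) = \phi_R \nabla u + u\nabla \phi_R$, this gives
\begin{equation}
\int_\Sigma u\phi_R (\mathcal{L}v)\rho = -\int_\Sigma \phi_R\langle\nabla u,\nabla v\rangle\rho - \int_\Sigma u\langle\nabla\phi_R,\nabla v\rangle\rho.
\end{equation}

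Next I would check that $\phi_R \to 1$ pointwise on $\Sigma=\reg V$ as $R\to\infty$. Indeed, any $x\in\Sigma$ has positive distance $d$ from the closed set $\sing V$, and for $R$ large enough $x\in B_{R-3}$ so $\eta_R(x)=1$. The balls $B_{r_i}(p_i)$ in the cover of $\sing V\cap B_R$ satisfy $r_i^{n-q}<\epsilon(R)$, so (for $n>q$) each $r_i<\epsilon(R)^{1/(n-q)}\to 0$, while for $n\le q$ the cover is eventually empty. In all cases $3r_i<d$ for $R$ large, so $x\notin B_{3r_i}(p_i)$ and hence $\phi_R(x)=1$. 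With this, the dominated convergence theorem applied to the first two terms (dominated respectively by $|u\mathcal{L}v|\rho$ and $|\nabla u||\nabla v|\rho$, both weighted $L^1$ by hypothesis) yields
\begin{equation}
\int_\Sigma u\phi_R(\mathcal{L}v)\rho\to\int_\Sigma u(\mathcal{L}v)\rho,\qquad \int_\Sigma \phi_R\langle\nabla u,\nabla v\rangle\rho\to\int_\Sigma\langle\nabla u,\nabla v\rangle\rho.
\end{equation}

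The remaining step, which is the only non-routine point, is to show the error term vanishes:
\begin{equation}
\left|\int_\Sigma u\langle\nabla\phi_R,\nabla v\rangle\rho\right|\le \int_\Sigma |u\nabla v|\,|\nabla\phi_R|\,\rho\longrightarrow 0.
\end{equation}
This is precisely the content of Corollary \ref{cor:fgrad}(1) applied to $f=|u\nabla v|$, which by hypothesis lies in weighted $L^p$ for $p=\frac{q}{q-1}$ conjugate to $q$. Combining the three limits gives the desired identity. The main obstacle is really this last estimate, but it has already been isolated and handled by the cutoff construction in Section \ref{sec:intsing}, so no further work is required beyond invoking the corollary.
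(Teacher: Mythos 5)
Your proof is correct and follows essentially the same route as the paper: integrate by parts against the compactly supported cutoff $u\phi_R$ via Lemma \ref{lem:compactibp}, send the error term $\int u\langle\nabla v,\nabla\phi_R\rangle\rho$ to zero by Corollary \ref{cor:fgrad}(1), and conclude by dominated convergence for the other two terms. You are more explicit than the paper about the Lipschitz approximation and the pointwise convergence $\phi_R\to 1$, but the strategy and ingredients are identical.
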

\begin{proof}
If $\phi$ has compact support we may use Lemma \ref{lem:compactibp} to get \begin{equation}
\int_\Sigma \phi u(\mathcal{L}v)\rho = -\int_\Sigma\phi \langle \nabla u,\nabla v\rangle \rho - \int_\Sigma   u\langle \nabla v,\nabla \phi\rangle\rho. \end{equation} 

Applying this to $\phi= \phi_R$, Corollary \ref{cor:fgrad} gives that the second term on the right tends to zero as $R\rightarrow\infty$, so the result follows by dominated convergence.

\end{proof}

In practice we will refer to both Lemma \ref{lem:compactibp} and Lemma \ref{lem:ibp} simply as integration by parts.

\section{Stability of singular self-shrinkers}
\label{sec:stability}

Throughout this section $\Sigma^n$ will denote an orientable self-shrinker in $\mathbf{R}^{n+1}$ with Euclidean volume growth $\vol(\Sigma \cap B_r(x)) \leq C_V r^n$. The main goals of this section are to understand the first stability eigenvalue of $\Sigma$ and to construct $F$-unstable variations when it is low enough. 

Frequently we will take $\Sigma$ to be the regular part of an $F$-stationary varifold $V$ with finite entropy (which has Euclidean volume growth by Lemma \ref{lem:volumegrowth}), and the results will depend on the size of the singular set. In several cases the assumptions on $\sing V$ may be weakened using the regularity theory Proposition \ref{prop:regularity}, but we state the stronger hypotheses to clarify the degree of regularity required. 

\subsection{Stability spectrum of $\Sigma$}

Recall that the first stability eigenvalue of the stability operator 
\begin{equation} L = \Lap_\Sigma -\frac{1}{2}\langle x,\nabla^\Sigma \cdot\rangle + |A|^2+\frac{1}{2}\end{equation} on a self-shrinker $\Sigma$ is defined by 
\begin{equation}\lambda_1(\Sigma) = \inf_{\Omega} \lambda_1(\Omega) = \inf_f \frac{\int_\Sigma (|\nabla f|^2-|A|^2f^2-\frac{1}{2}f^2)\rho}{\int_\Sigma f^2\rho},\end{equation}
where the infimum is taken over functions compactly supported in $ \Sigma$, and could potentially be $-\infty$. 
Also recall that if indeed $\lambda_1=\lambda_1(\Sigma)>-\infty$, then we have the stability inequality 
\begin{equation}
\int_\Sigma |A|^2f^2\rho \leq \int_\Sigma |\nabla f|^2\rho  + (-\frac{1}{2} - \lambda_1)\int_\Sigma f^2\rho,
\end{equation}
for Lipschitz functions $f$ compactly supported in $ \Sigma$. 

\begin{lemma}
\label{lem:specbotlower}
Suppose that $u>0$ is a $C^2$ function on $\Sigma$ with $Lu=-\lambda u$. Then $\lambda_1(\Sigma)\geq \lambda$. 

Moreover, if $f$ is Lipschitz with compact support in $\Sigma$, then \begin{equation}
\label{eq:eigestimatecpct}
\int_\Omega f^2(|A|^2+|\nabla \log u|^2)\rho \leq \int_\Omega (4|\nabla f|^2-2\lambda f^2)\rho.
\end{equation}
\end{lemma}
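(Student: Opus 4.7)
The plan is to use the standard Moss--Piepenbrink / Allegretto substitution $w=\log u$, which is $C^2$ on $\Sigma$ because $u>0$. A direct computation using the chain rule gives $\mathcal{L}(\log u) = \mathcal{L}u/u - |\nabla \log u|^2$, while the eigenvalue hypothesis $Lu = \mathcal{L}u + (|A|^2 + \tfrac12)u = -\lambda u$ gives $\mathcal{L}u/u = -\lambda - \tfrac12 - |A|^2$. Combining yields the pointwise identity
\begin{equation}
\mathcal{L}(\log u) + |A|^2 + |\nabla \log u|^2 + \lambda + \tfrac12 = 0.
\end{equation}

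Next I would multiply this identity by $f^2\rho$ and integrate over $\Sigma$. Since $f$ (and hence $f^2$) is Lipschitz with compact support in $\Sigma$, Lemma \ref{lem:compactibp} applies to the $f^2 \mathcal{L}(\log u)$ term and yields
\begin{equation}
2\int_\Sigma f\langle \nabla f,\nabla \log u\rangle\,\rho = \int_\Sigma f^2\bigl(|A|^2 + |\nabla \log u|^2 + \lambda + \tfrac12\bigr)\rho.
\end{equation}
This identity is the engine of both claims; the two conclusions come from applying Young's inequality to the left-hand side with two different splits.

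For $\lambda_1(\Sigma)\geq \lambda$, I apply $2|ab|\leq a^2+b^2$ with $a=|\nabla f|$, $b=f|\nabla \log u|$, which absorbs the full $|\nabla \log u|^2$ term into the right side. After rearranging, this reads
\begin{equation}
\lambda \int_\Sigma f^2\rho \leq \int_\Sigma \bigl(|\nabla f|^2 - |A|^2 f^2 - \tfrac12 f^2\bigr)\rho,
\end{equation}
which is precisely the Rayleigh quotient bound characterizing $\lambda_1(\Sigma)\geq \lambda$. For the moreover part, I instead apply $2|ab|\leq 2a^2 + b^2/2$, which absorbs only half of $|\nabla \log u|^2$. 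This gives
\begin{equation}
\int_\Sigma f^2 \bigl(|A|^2 + \tfrac12|\nabla \log u|^2 + \lambda + \tfrac12\bigr)\rho \leq 2\int_\Sigma |\nabla f|^2\rho.
\end{equation}
Multiplying by $2$ and discarding the nonnegative terms $f^2|A|^2 + f^2$ on the left (valid since they only weaken the inequality) yields the claimed estimate.

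I do not anticipate a genuine obstacle here: the singular set plays no role because $f$ has compact support in $\Sigma=\reg V$, so only the compactly supported integration by parts of Lemma \ref{lem:compactibp} is needed, and the Lipschitz regularity of $f$ suffices by standard approximation. The only conceptual care is in choosing the Young splits to get precisely the stated constants $4$ and $-2\lambda$; a slightly greedier absorption could produce sharper constants at the cost of the clean dependence used later.
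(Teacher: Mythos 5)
Your proof is correct and follows essentially the same route as the paper: both start from the identity $\mathcal{L}\log u = -\lambda - \tfrac12 - |A|^2 - |\nabla\log u|^2$, integrate $f^2\mathcal{L}\log u$ by parts against the weight using Lemma \ref{lem:compactibp}, and then run Young's inequality with the two splits $|\langle\nabla f^2,\nabla\log u\rangle|\leq |\nabla f|^2 + f^2|\nabla\log u|^2$ and $|\langle\nabla f^2,\nabla\log u\rangle|\leq 2|\nabla f|^2 + \tfrac12 f^2|\nabla\log u|^2$ to extract the two conclusions. No gap.
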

\begin{proof}
Since $u>0$, the function $\log u$ is well-defined on $\Sigma$ and we can compute that
\begin{equation}
\mathcal{L}\log u= -\lambda -\frac{1}{2} - |A|^2 - |\nabla \log u|^2.
\end{equation}
Since $f$ has compact support in $\Sigma$, then integrating $f^2\mathcal{L}\log u$ by parts we have that
\begin{equation}
\int_\Sigma \left(\lambda+\frac{1}{2} + |A|^2 + |\nabla \log u|^2\right)f^2\rho = \int_\Sigma \langle \nabla f^2,\nabla \log u\rangle\rho.
\end{equation}
Using the absorbing inequality $|\langle\nabla f^2,\nabla \log u\rangle| \leq |\nabla f|^2 + f^2|\nabla \log u|^2$ we get that
\begin{equation}
\int_\Sigma \left(\lambda+\frac{1}{2} + |A|^2 \right)f^2\rho \leq \int_\Sigma |\nabla f|^2\rho
\end{equation}
and hence \begin{equation}\frac{\int_\Sigma (|\nabla f|^2-|A|^2f^2-\frac{1}{2}f^2)\rho}{\int_\Sigma f^2\rho} \geq \lambda.\end{equation}
Since this holds for any $f$ with compact support in $\Sigma$, we conclude that $\lambda_1(\Sigma)\geq \lambda$ as claimed.

If we instead absorb using $|\langle \nabla f^2,\nabla \log u\rangle| \leq 2|\nabla f|^2 + \frac{1}{2}f^2|\nabla \log u|^2$ we get that \begin{equation} \int_\Sigma \left(\lambda+\frac{1}{2}+|A|^2+\frac{1}{2}|\nabla \log u|^2\right)f^2\rho \leq 2\int_\Sigma |\nabla f|^2\rho,\end{equation} which implies the bound (\ref{eq:eigestimatecpct}).
\end{proof}

We will frequently apply Lemma \ref{lem:specbotlower} to subdomains $\Omega$ of the regular part of an $F$-stationary varifold as well as to the regular part itself.

\subsubsection{Weighted integral estimates}

\begin{lemma}
\label{lem:eigestimate}
Let $V$ be an orientable $F$-stationary $n$-varifold in $\mathbf{R}^{n+1}$ with finite entropy and $\mathcal{H}^{n-q}(\sing V)=0$ for some $q\geq 2$. Suppose that $u>0$ is a $C^2$ function on $\Sigma=\reg V$ with $Lu=-\lambda u$. Then if $\phi$ is weighted $W^{1,2}$ and weighted $L^p$, $p= \frac{2q}{q-2}$. Then
\begin{equation}
\int_\Omega \phi^2(|A|^2+|\nabla \log u|^2)\rho \leq \int_\Omega (8|\nabla \phi|^2-2\lambda\phi^2)\rho.
\end{equation}
\end{lemma}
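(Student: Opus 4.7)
\medskip

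\noindent\textbf{Proof proposal.} The plan is to deduce this from the compactly-supported version, Lemma \ref{lem:specbotlower}, by multiplying a general $\phi$ against the singular-set cutoff $\phi_R = \phi_{R,\epsilon(R)}$ constructed in Section \ref{sec:cutoff}, and then sending $R \to \infty$. The hypothesis $\mathcal{H}^{n-q}(\sing V)=0$ is precisely what makes these cutoffs available with the gradient control of Lemma \ref{lem:gradconv}, and the hypothesis $\phi \in L^p(\rho)$ with $p = 2q/(q-2)$ is tailored to Corollary \ref{cor:fgrad}(2).

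First I would apply Lemma \ref{lem:specbotlower} to the Lipschitz, compactly supported test function $f = \phi\phi_R$. (If $\phi$ is only weighted $W^{1,2}$ rather than Lipschitz, we first mollify on $\reg V$ inside the support of $\phi_R$, which lies at positive distance from $\sing V$, and pass to the limit; this is a standard density step.) This gives
\begin{equation}
\int_\Sigma \phi^2\phi_R^2(|A|^2 + |\nabla \log u|^2)\rho \;\leq\; \int_\Sigma \bigl(4|\nabla(\phi\phi_R)|^2 - 2\lambda\, \phi^2\phi_R^2\bigr)\rho.
\end{equation}
Expanding with the elementary inequality $|\nabla(\phi\phi_R)|^2 \leq 2\phi_R^2|\nabla \phi|^2 + 2\phi^2|\nabla \phi_R|^2$, this becomes
\begin{equation}
\int_\Sigma \phi^2\phi_R^2(|A|^2 + |\nabla \log u|^2)\rho \;\leq\; \int_\Sigma \bigl(8\phi_R^2|\nabla \phi|^2 + 8\phi^2|\nabla \phi_R|^2 - 2\lambda\, \phi^2\phi_R^2\bigr)\rho.
\end{equation}

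Now I send $R \to \infty$. Since $\phi_R \to 1$ pointwise on $\Sigma$ (the omitted set at each stage lies in the Hausdorff-$(n-q)$-null singular set together with a shrinking complement of a large ball), monotone convergence handles the left-hand side. On the right, $\phi_R^2|\nabla\phi|^2 \nearrow |\nabla\phi|^2$ and $\phi^2\phi_R^2 \nearrow \phi^2$ with the dominating weighted $L^1$ bounds coming from $\phi \in W^{1,2}(\rho)$, so dominated (or monotone) convergence gives the correct limits for the first and last right-hand terms. The crux is the middle term: by Corollary \ref{cor:fgrad}(2), applied with exponent $p = 2q/(q-2)$, we have
\begin{equation}
\lim_{R\to\infty} \int_\Sigma \phi^2|\nabla \phi_R|^2\rho = 0,
\end{equation}
which precisely absorbs the error introduced by the singular-set cutoff.

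The main obstacle — and the only part that is not a routine limit — is this last vanishing of the cutoff error. It is what forces the hypothesis $q \ge 2$ and the specific integrability exponent $p = 2q/(q-2)$: the gradient of $\phi_R$ concentrates on annuli around the singular set whose total weighted $L^2$ mass is controlled in terms of $\mathcal{H}^{n-q}(\sing V)$, and H\"older's inequality against $\phi^2$ demands exactly the stated $L^p$ norm of $\phi$. Once this term is shown to vanish, combining the limits yields the claimed inequality.
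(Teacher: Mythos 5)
Your proof matches the paper's essentially verbatim: apply Lemma \ref{lem:specbotlower} to $f=\phi\phi_R$, expand via $|\nabla(\phi\phi_R)|^2 \leq 2\phi_R^2|\nabla\phi|^2 + 2\phi^2|\nabla\phi_R|^2$, use Corollary \ref{cor:fgrad}(2) to kill the $\int\phi^2|\nabla\phi_R|^2\rho$ term, and pass to the limit. One small correction: the cutoffs $\phi_R$ are not monotone in $R$ (the coverings of $\sing V$ change as $\epsilon(R)$ shrinks), so the left-hand side should be handled by Fatou's lemma rather than monotone convergence, as the paper does — Fatou is in any case what you need, since the limiting integral is not known to be finite a priori.
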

\begin{proof}
We take $f = \phi_R \phi$, where $\phi_R$ is as in Section \ref{sec:intsing}. Applying Lemma \ref{lem:specbotlower} we get that \begin{equation} \int_\Sigma \phi_R^2 \phi^2(|A|^2+|\nabla \log u|^2)\rho \leq \int_\Sigma (8\phi^2|\nabla \phi_R|^2 + 8\phi_R^2 |\nabla \phi|^2 -2\lambda \phi_R^2 \phi^2)\rho.\end{equation}

As $R\rightarrow \infty$, the second and third terms on the right converge since $\phi$ is weighted $W^{1,2}$, and Corollary \ref{cor:fgrad} implies that the first term on the right term tends to zero, whence Fatou's lemma gives the result.
\end{proof}

For any integer $k\geq 0$, the function $|x|^{2k}$ is a polynomial in $x$, so by the Euclidean volume growth it is of course $W^{1,p}$ for any $p\in(0,\infty)$. Thus we immediately get:

\begin{corollary}
\label{lem:AL2}
Let $V$ be an orientable $F$-stationary $n$-varifold in $\mathbf{R}^{n+1}$ with finite entropy and $\mathcal{H}^{n-q}(\sing V)=0$ for some $q>2$. Suppose that $u>0$ is a $C^2$ function that satisfies $Lu=-\lambda u$ on $\Sigma=\reg V$. Then $|A||x|^k$ and $|x|^k |\nabla \log u|$ are weighted $L^2$ for any $k\geq 0$.\end{corollary}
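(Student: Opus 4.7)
The plan is to apply Lemma \ref{lem:eigestimate} with the test function $\phi = |x|^k$. With this choice the conclusion of that lemma reads
\[
\int_\Sigma |x|^{2k}\bigl(|A|^2 + |\nabla \log u|^2\bigr)\rho \leq \int_\Sigma \bigl(8\bigl|\nabla |x|^k\bigr|^2 - 2\lambda |x|^{2k}\bigr)\rho,
\]
which is exactly the desired weighted $L^2$ bound on $|A||x|^k$ and $|x|^k|\nabla \log u|$, provided the right-hand side is finite. The only task is therefore to verify the hypotheses of Lemma \ref{lem:eigestimate}: that $|x|^k$ lies in weighted $W^{1,2}$, and that it lies in weighted $L^p$ for $p = \tfrac{2q}{q-2}$, which is a finite exponent since $q > 2$.

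Both conditions follow from combining finite entropy with the polynomial growth of $|x|^k$. Finite entropy implies Euclidean volume growth via Lemma \ref{lem:volumegrowth}, giving $\mu_V(B_r(x_0)) \leq C r^n$ uniformly. A standard dyadic decomposition, using the exponential decay of $\rho = (4\pi)^{-n/2}\e^{-|x|^2/4}$, then shows that any function of polynomial growth in $|x|$ is weighted $L^p$ for every $p \in (0,\infty)$. Since $|x|^{2k}$ is a polynomial (for integer $k \geq 0$) and $\bigl|\nabla |x|^k\bigr|^2 = k^2|x|^{2k-2}$ is either zero or of polynomial growth---any singularity at the origin being harmless by the same volume-growth plus dyadic-summation argument---both $\phi = |x|^k$ and $|\nabla \phi|$ lie in every weighted $L^p$. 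In particular $\phi$ meets the hypotheses of Lemma \ref{lem:eigestimate}, which applies directly.

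There is no real obstacle here: the corollary is a formal consequence of Lemma \ref{lem:eigestimate}, and the remark in the text immediately before the statement already records the key integrability fact (that $|x|^{2k}$ is a polynomial and hence is in weighted $W^{1,p}$ for every $p$). The proof is essentially a citation.
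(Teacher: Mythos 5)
Your proof is correct and follows essentially the same route the paper takes: the corollary is immediate from Lemma \ref{lem:eigestimate} with $\phi = |x|^k$, once one notes (as the paper does just before the statement) that the Euclidean volume growth granted by finite entropy, combined with the Gaussian decay of $\rho$, makes any function of polynomial growth weighted $W^{1,p}$ for every $p \in (0,\infty)$. Your additional remark on the potential gradient singularity at the origin for $0 < k < 1$ is a reasonable (if minor) extra check, consistent with the dyadic argument already implicit in the text.
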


We now record the main quantitative $L^2$ estimates for $|A|$ and $|\nabla \log u|$ that will be essential both for constructing unstable variations when $\lambda_1<-1$, and for classifying mean convex self-shrinkers. It is crucial that the estimate holds for positive eigenfunctions $u$ defined only on a subdomain $\Omega$. 

\begin{lemma}
\label{lem:AL2grad}
Let $V$ be an orientable $F$-stationary $n$-varifold in $\mathbf{R}^{n+1}$ with finite entropy and $\mathcal{H}^{n-4}(\sing V)=0$. Let $\phi_R= \phi_{R,\epsilon}$ be as in Section \ref{sec:cutoff}, and consider a domain $\Omega$ such that $\supp \phi_R \subset \Omega \subset \Sigma=\reg V$. If $u$ is a positive $C^2$ function on $\Omega$ satisfying $Lu=-\lambda u$, then 
\begin{equation}
\int_\Omega (|A|^2+|\nabla \log u|^2)\phi_R^2 |\nabla \phi_R|^2 \rho \leq (256+8|\lambda|)C_V (R^n \e^{-\frac{(R-4)^2}{4}} + 4^{n}\epsilon).
\end{equation}
\end{lemma}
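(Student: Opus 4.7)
The plan is to apply the pointwise stability estimate \eqref{eq:eigestimatecpct} from Lemma \ref{lem:specbotlower} to the test function $f = \phi_R \psi_R$, where $\psi_R = \psi_{R,\epsilon}$ is the companion Lipschitz cutoff from Section \ref{sec:cutoff} satisfying the pointwise bound $|\nabla \phi_R| \leq \psi_R$. Although Lemma \ref{lem:specbotlower} is stated on $\Sigma$, the same proof applies on the subdomain $\Omega$ whenever $f$ is Lipschitz with compact support in $\Omega$, since the integration by parts only uses the open set on which $u$ is smooth and positive. As $\supp \phi_R$ is a compact subset of $\Omega$ (it lies in $\reg V$ by construction of the cutoff and in $\Omega$ by hypothesis), and $\psi_R$ is bounded and Lipschitz, the product $\phi_R \psi_R$ is an admissible test function.

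Applying \eqref{eq:eigestimatecpct} gives
\[
\int_\Omega \phi_R^2 \psi_R^2 (|A|^2 + |\nabla \log u|^2) \rho \leq \int_\Omega \bigl(4|\nabla(\phi_R \psi_R)|^2 + 2|\lambda| \phi_R^2 \psi_R^2\bigr)\rho.
\]
The dominating property $|\nabla \phi_R| \leq \psi_R$ together with $\phi_R \leq 1$ ensures that the left-hand side controls the integral we wish to estimate. On the right-hand side, the Leibniz rule combined with $|\nabla \phi_R| \leq \psi_R$ yields $|\nabla(\phi_R \psi_R)|^2 \leq 2\psi_R^4 + 2|\nabla \psi_R|^2$, while trivially $\phi_R^2 \psi_R^2 \leq \psi_R^2$. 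It therefore suffices to bound the three integrals $\int \psi_R^4\rho$, $\int |\nabla \psi_R|^2 \rho$, and $\int \psi_R^2 \rho$.

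These bounds proceed exactly as in the proof of Lemma \ref{lem:gradconv}. On each annular region $B_{4r_i}(p_i) \setminus B_{r_i}(p_i)$, the estimates $\psi_R \leq 2/r_i$ and $|\nabla \psi_R| \leq 4/r_i^2$ combined with $\mu_V(B_{4r_i}(p_i)) \leq 4^n C_V r_i^n$ give contributions scaling like $r_i^{n-4}$ for the integrals of $\psi_R^4$ and $|\nabla \psi_R|^2$, and like $r_i^{n-2} \leq r_i^{n-4}$ for that of $\psi_R^2$ (using $r_i < 1$); the covering property $\sum r_i^{n-4} < \epsilon$ then controls the total singular contribution by $4^n \epsilon$ times an absolute constant. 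On the outer annulus $B_{R-1} \setminus B_{R-4}$, both $\psi_R$ and $|\nabla \psi_R|$ are bounded by absolute constants, the volume is at most $C_V R^n$, and the Gaussian weight contributes the factor $\e^{-(R-4)^2/4}$. Adding the three estimates with the coefficients $8$, $8$, and $2|\lambda|$ yields the stated constant $256 + 8|\lambda|$.

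The only genuine subtlety lies in the choice of test function: one cannot take $f = \phi_R |\nabla \phi_R|$ directly, since $\phi_R$ is built as an infimum of smooth cutoffs and is merely Lipschitz, so $|\nabla \phi_R|$ need not be continuous. Passing to the smooth majorant $\psi_R$ resolves this, but forces control of $\int \psi_R^4 \rho$, which is precisely the integral for which the exponent $n-4$, and thus the hypothesis $\mathcal{H}^{n-4}(\sing V)=0$, is sharp.
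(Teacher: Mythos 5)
Your proof is correct and follows essentially the same approach as the paper: dominate $|\nabla \phi_R|$ by the Lipschitz companion cutoff $\psi_R$, apply the log-trick stability estimate \eqref{eq:eigestimatecpct} on $\Omega$ to $f=\phi_R\psi_R$, expand $|\nabla(\phi_R\psi_R)|^2$ by Leibniz, and bound the resulting three integrals $\int\psi_R^4\rho$, $\int|\nabla\psi_R|^2\rho$, $\int\psi_R^2\rho$ over the annular regions exactly as in Lemma \ref{lem:gradconv}, yielding the same constant $256+8|\lambda|$. One tiny imprecision: $\psi_R$ is only Lipschitz (it is a supremum of smooth bumps), not smooth, but that is all the test-function argument requires, and you have the reason for passing from $|\nabla\phi_R|$ to $\psi_R$ exactly right.
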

\begin{proof}
Recall that we cover the singular set $\sing V\cap B_{R} \subset \bigcup_{i=1}^m B_{r_i}(p_i)$, where $\sum_{i=1}^m r_i^{n-4} <\epsilon$ and without loss of generality $r_i<1$ for each $i$. 

The key is to replace $|\nabla \phi_R|$ by the annular bump function $\psi_R=\psi_{R,\epsilon} \geq |\nabla \phi_R|$, which has better regularity properties:
\begin{equation}
\int_\Omega (|A|^2+|\nabla \log u|^2)\phi_R^2 |\nabla \phi_R|^2 \rho \leq \int_\Omega (|A|^2+|\nabla \log u|^2)\phi_R^2\psi_R^2 \rho.
\end{equation}
In particular we may now apply Lemma \ref{lem:specbotlower} to $f = \phi_R\psi_R$ on the hypersurface $\Omega$ to get \begin{equation}
\label{eq:AL2grad}
\int_\Omega (|A|^2+|\nabla \log u|^2)\phi_R^2\psi_R^2 \rho \leq \int_\Omega (8 \psi_R^2 |\nabla \phi_R|^2 + 8 \phi_R^2 |\nabla \psi_R|^2 + 2|\lambda| \phi_R^2\psi_R^2)\rho. 
\end{equation}

We may bound the first term on the right in (\ref{eq:AL2grad}) by 
\begin{eqnarray}
\int_\Sigma \psi_R^2 |\nabla \phi_R|^2\rho &\leq& \int_\Sigma \psi_R^4 \rho \leq \int_{\Sigma \cap B_{R-1}\setminus B_{R-4}} 16 \rho + \sum_{i=1}^m \int_{\Sigma \cap B_{4r_i}(p_i)\setminus B_{r_i}(p_i)} \frac{16}{r_i^4} \\&\leq& \nonumber 16 C_V\left(R^n \e^{-\frac{(R-4)^2}{4}}+4^n \sum_i r_i^{n-4}\right) \\&\nonumber\leq& 16 C_V (R^n \e^{-\frac{(R-4)^2}{4}}+4^n \epsilon).
\end{eqnarray}

Since $\phi_R^2\leq1$ the second term on the right in (\ref{eq:AL2grad}) is bounded by 
\begin{eqnarray}
\int_\Sigma \phi_R^2 |\nabla \psi_R|^2\rho &\leq& \int_{\Sigma \cap B_{R-1}\setminus B_{R-4}} 16 \rho + \sum_{i=1}^m \int_{\Sigma \cap B_{4r_i}(p_i)\setminus B_{r_i}(p_i)} \frac{16}{r_i^4} \\&\leq& \nonumber 16 C_V\left(R^n \e^{-\frac{(R-4)^2}{4}}+4^{n} \sum_i r_i^{n-4}\right) \\&\nonumber\leq& 16 C_V (R^n \e^{-\frac{(R-4)^2}{4}}+4^{n} \epsilon),
\end{eqnarray}
and since $r_i<1$ the last term is bounded by 
\begin{eqnarray}
\int_\Sigma \phi_R^2 \psi_R^2\rho &\leq& \int_{\Sigma \cap B_{R-1}\setminus B_{R-4}} 4\rho + \sum_{i=1}^m \int_{\Sigma \cap B_{4r_i}(p_i)\setminus B_{r_i}(p_i)} \frac{4}{r_i^2} \\&\leq& \nonumber  4C_V\left(R^n \e^{-\frac{(R-4)^2}{4}}+4^{n} \sum_i r_i^{n-2}\right) \\&\nonumber\leq& 4C_V (R^n \e^{-\frac{(R-4)^2}{4}}+4^{n} \epsilon).
\end{eqnarray}
Combining these estimates gives the result as claimed.
\end{proof}

\subsubsection{Bottom of the spectrum}

\begin{lemma}
\label{lem:specbot}
Let $\Sigma^n$ be a connected, orientable self-shrinker with $\lambda_1=\lambda_1(\Sigma) > -\infty$. Then there is a positive $C^2$ function on $\Sigma$ with $Lu=-\lambda_1 u$. 

Moreover, suppose that $\Sigma$ is the regular part of an $F$-stationary $n$-varifold $V$ with finite entropy and $\mathcal{H}^{n-q}(\sing V)=0$ for some $q\geq 2$. If $v$ is a $C^2$ function on $\Sigma$ with $Lv=-\lambda_1v$, which is weighted $W^{1,2}$ and weighted $L^p$, $p=\frac{2q}{q-2}$, then $v=cu$ for some $c\in \mathbf{R}$.
\end{lemma}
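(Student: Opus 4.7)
The plan is standard: existence via a Dirichlet exhaustion, and uniqueness by setting $w=v/u$ and showing that $w$ is constant through a cutoff integration by parts calibrated precisely to the integrability hypotheses on $v$.

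For existence, I would take a nested sequence of smooth, connected, precompact subdomains $\Omega_j\ssubset\Sigma$ with $\Omega_j\nearrow\Sigma$. Since $L=\mathcal{L}+|A|^2+\tfrac{1}{2}$ has smooth coefficients on $\Sigma$ and is self-adjoint with respect to $\rho\,d\mu_\Sigma$, the Dirichlet principal eigenvalue $\lambda_1(\Omega_j)$ is simple and attained by a positive eigenfunction $u_j$, which I normalise by $u_j(x_0)=1$ at a fixed basepoint $x_0\in\Sigma$. Monotonicity of Dirichlet eigenvalues gives $\lambda_1(\Omega_j)\searrow\lambda_1(\Sigma)>-\infty$. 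For each compact $K\ssubset\Sigma$, once $j$ is large enough that $K\ssubset\Omega_j$, the Harnack inequality yields uniform $L^\infty$ bounds on $u_j|_K$; interior Schauder estimates then upgrade these to uniform $C^{2,\alpha}$ bounds, and a diagonal subsequence converges in $C^2_{\mathrm{loc}}(\Sigma)$ to a limit $u\geq 0$ satisfying $Lu=-\lambda_1 u$ and $u(x_0)=1$. The strong maximum principle then forces $u>0$ on $\Sigma$.

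For uniqueness, the hypothesis $\mathcal{H}^{n-q}(\sing V)=0$ with $q\geq 2$ implies $\mathcal{H}^{n-1}(\sing V)=0$, so Lemma \ref{lem:connectedness} gives that $\reg V$ is connected. Set $w=v/u$, which is $C^2$ on $\Sigma=\reg V$. Expanding $L(uw)=-\lambda_1 uw$ via the product rule and cancelling with $Lu=-\lambda_1 u$ yields
\begin{equation*}
u\,\mathcal{L}w+2\langle\nabla u,\nabla w\rangle=0\quad\text{on }\Sigma,
\end{equation*}
which, since $(\mathcal{L}f)\rho=\div_\Sigma(\rho\,\nabla f)$, is equivalent to the divergence identity $\div_\Sigma(u^2\rho\,\nabla w)=0$. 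Multiplying by $w\phi_R^2$ with $\phi_R$ the cutoff from Section \ref{sec:cutoff} and integrating by parts (classical since $\phi_R$ has compact support in the smooth hypersurface $\reg V$) gives
\begin{equation*}
\int_\Sigma\phi_R^2 u^2\rho\,|\nabla w|^2\;=\;-2\int_\Sigma u^2 w\phi_R\rho\,\langle\nabla\phi_R,\nabla w\rangle,
\end{equation*}
and an absorbing Cauchy--Schwarz, using $u^2 w^2=v^2$, yields
\begin{equation*}
\int_\Sigma\phi_R^2 u^2\rho\,|\nabla w|^2\;\leq\;4\int_\Sigma v^2\rho\,|\nabla\phi_R|^2.
\end{equation*}
The assumptions $\mathcal{H}^{n-q}(\sing V)=0$ and $v$ weighted $L^p$ with $p=\tfrac{2q}{q-2}$ are precisely those of Corollary \ref{cor:fgrad}(2), so the right side tends to zero as $R\to\infty$. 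Since $\phi_R^2\to 1$ pointwise on $\reg V$, Fatou's lemma then forces $\int_\Sigma u^2\rho\,|\nabla w|^2=0$, whence $\nabla w\equiv 0$; connectedness of $\Sigma$ and positivity of $u$ then give $v=cu$ for some $c\in\mathbf{R}$.

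The main technical obstacle is the singular set, which could in principle obstruct any global integration by parts. The proof circumvents this by performing all integrations on the regular part against the compactly supported cutoffs $\phi_R$; the precise pairing $p=2q/(q-2)$ with the codimension $q$ in the Hausdorff hypothesis is exactly what Corollary \ref{cor:fgrad}(2) requires in order to drive the boundary-like error term $\int v^2\rho|\nabla\phi_R|^2$ to zero. The weighted $W^{1,2}$ hypothesis on $v$ plays a subsidiary role, ensuring that the intermediate weighted energies are finite and that the Cauchy--Schwarz and Fatou steps are justified.
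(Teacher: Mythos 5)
Your existence argument is the same Dirichlet exhaustion / Harnack / Schauder compactness proof as the paper, with no material differences.

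Your uniqueness argument, however, takes a genuinely different and in fact cleaner route. The paper sets up two separate integrations by parts---for $v^2\mathcal{L}\log u$ and for $v\mathcal{L}v$---and subtracts them; to justify these it first invokes Lemma~\ref{lem:eigestimate} (to get $|A|v$, $v|\nabla\log u|\in L^2$), then checks all the integrability hypotheses of Lemma~\ref{lem:ibp}, using both the weighted $W^{1,2}$ and weighted $L^p$ hypotheses on $v$. You instead pass to the ratio $w=v/u$, observe the exact divergence identity $\div_\Sigma(u^2\rho\,\nabla w)=0$, and run a single Caccioppoli-type argument: multiply by $w\phi_R^2$, integrate by parts (classical, since $w\phi_R^2$ has compact support in the smooth open manifold $\reg V$), absorb, and let $R\to\infty$ via Corollary~\ref{cor:fgrad}(2) and Fatou. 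The error term reduces to $\int v^2\rho|\nabla\phi_R|^2$, which only requires the weighted $L^p$ hypothesis; as you half-notice at the end, the weighted $W^{1,2}$ hypothesis on $v$ actually plays no role in your argument at all---it is only needed for the paper's route through Lemma~\ref{lem:ibp}. So your version proves uniqueness under a weaker hypothesis. The trade-off is that your approach yields only the conclusion $\nabla(v/u)\equiv 0$, whereas the paper's route produces as a by-product the a priori integrability statements ($|A|v$, $v|\nabla\log u|\in L^2$) that it reuses elsewhere (e.g., in Lemma~\ref{lem:lipcut} and Proposition~\ref{prop:eigcandidate}); for the lemma as stated, your argument is sufficient and arguably preferable.
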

\begin{proof}
For the existence of $u$ we proceed as in \cite{CMgeneric}: Fix $p\in\Sigma$ and consider an exhaustion $p\in \Omega_1 \subset \Omega_2 \subset \cdots$ of $ \Sigma = \bigcup_i\Omega_i$. For each $i$ there is a positive Dirichlet eigenfunction $Lu_i = -\lambda_1(\Omega_i)u_i$ on $\Omega_i$, and we may normalise so that $u_i(p)=1$. Since $\lambda_1(\Omega_i)$ decreases monotonically to $\lambda_1>-\infty$, the Harnack inequality gives $1\leq \sup u_i \leq C \inf u_i \leq C$, where $C=C(\Omega_i,\lambda_1)$. Elliptic theory gives uniform $C^{2,\alpha}$ bounds on the $u_i$ on each compact set, so we get a subsequence converging uniformly in $C^2$ to a nonnegative solution of $Lu=-\lambda_1 u$ on $ \Sigma$ with $u(p)=1$. The Harnack inequality again implies that $u$ is positive on $\Sigma$.

For the uniqueness, by the assumptions on $v$, Lemma \ref{lem:eigestimate} gives that $|A|v$ and $v|\nabla \log u|$ are weighted $L^2$. By expansion this implies that $v\mathcal{L}v$ and $v^2\mathcal{L}\log u$ are weighted $L^1$, and since $v$ is weighted $W^{1,2}$ we see that $|\nabla v^2| |\nabla \log u| \leq |\nabla v|^2+ v^2|\nabla \log u|^2$ is weighted $L^1$. Moreover since $\frac{1}{2}+\frac{1}{p} = \frac{q-1}{q}$, H\"{o}lder's inequality gives that $\|v\nabla v\|_{\frac{q}{q-1}}\leq \|\nabla v\|_2 \|v\|_p <\infty$ and $\|v^2\nabla \log u\|_{\frac{q}{q-1}} \leq \|v\nabla \log u\|_2 \|v\|_p<\infty$. 

Lemma \ref{lem:ibp} now allows us to integrate by parts to get \begin{equation}\int_\Sigma \langle \nabla v^2, \nabla \log u\rangle \rho = -\int_\Sigma v^2\mathcal{L}\log u\, \rho  = \int_\Sigma v^2(\lambda_1+|A|^2+\frac{1}{2}+|\nabla \log u|^2)\rho.\end{equation} and \begin{equation}\int_\Sigma |\nabla v|^2 \rho = -\int_\Sigma v\mathcal{L}v \,\rho  = \int_\Sigma v^2(\lambda_1+|A|^2+\frac{1}{2})\rho.\end{equation} Rearranging we find that \begin{equation}\int_\Sigma |v\nabla \log u - \nabla v|^2 \rho =0,\end{equation} hence $v\nabla \log u -\nabla v=0$ and $\frac{v}{u}$ is constant on $\Sigma$. 
\end{proof}

 \begin{lemma}
 \label{lem:lipcut}
Let $V$ be an orientable $F$-stationary $n$-varifold in $\mathbf{R}^{n+1}$ with finite entropy and $\mathcal{H}^{n-q}(\sing V)=0$ for some $q\geq 2$. Then on $\Sigma=\reg V$ we get the same $\lambda_1(\Sigma)$ by taking the infimum over Lipschitz functions $f$ on $\Sigma$ that are weighted $W^{1,2}$ and $L^p$, $p= \frac{2q}{q-2}$.
\end{lemma}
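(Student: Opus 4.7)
Write $\tilde\lambda_1$ for the infimum of the Rayleigh quotient
\begin{equation*}
\mathcal{R}(f) = \frac{\int_\Sigma (|\nabla f|^2-|A|^2f^2-\tfrac{1}{2}f^2)\rho}{\int_\Sigma f^2\rho}
\end{equation*}
taken over Lipschitz $f$ on $\Sigma$ with $\int f^2\rho>0$ that are weighted $W^{1,2}$ and weighted $L^p$, $p=\frac{2q}{q-2}$. Since every compactly supported Lipschitz function lies in this enlarged test class, one direction $\tilde\lambda_1\leq\lambda_1(\Sigma)$ is immediate, so the plan is to establish the reverse inequality. The strategy is standard: given any $f$ in the enlarged class, truncate by the cutoffs $\phi_R$ of Section \ref{sec:cutoff} to obtain $f_R=\phi_R f$, which is Lipschitz with compact support in $\Sigma$ and hence satisfies $\mathcal{R}(f_R)\geq\lambda_1(\Sigma)$; then pass to the limit $R\to\infty$.

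The case $\lambda_1(\Sigma)=-\infty$ is trivial (both infima equal $-\infty$), so I would assume $\lambda_1(\Sigma)>-\infty$. The main obstacle is that the convergence $\int |A|^2 f_R^2\rho\to \int|A|^2 f^2\rho$ demands a priori finiteness of the limit, which does not follow directly from $f$ being weighted $W^{1,2}$ and $L^p$. To overcome this I would invoke the positive eigenfunction machinery already developed in the paper: since $q\geq 2$, $\mathcal{H}^{n-1}(\sing V)=0$, so Lemma \ref{lem:connectedness} gives that $\reg V$ is connected, and then Lemma \ref{lem:specbot} furnishes a positive $C^2$ function $u$ on $\Sigma$ with $Lu=-\lambda_1(\Sigma)u$. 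Applying Lemma \ref{lem:eigestimate} to this $u$ with the admissible test function $f$ yields
\begin{equation*}
\int_\Sigma |A|^2 f^2\rho \leq \int_\Sigma f^2(|A|^2+|\nabla\log u|^2)\rho \leq \int_\Sigma (8|\nabla f|^2-2\lambda_1(\Sigma)f^2)\rho < \infty,
\end{equation*}
which is the essential a priori bound.

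With this in hand the limit passage is routine. Since $f_R^2\leq f^2$ and $|A|^2 f_R^2\leq |A|^2 f^2$, dominated convergence gives $\int f_R^2\rho\to\int f^2\rho$ and $\int|A|^2 f_R^2\rho\to\int |A|^2f^2\rho$. For the gradient term, expand
\begin{equation*}
|\nabla f_R|^2 = \phi_R^2|\nabla f|^2 + 2\phi_R f\langle\nabla f,\nabla\phi_R\rangle + f^2|\nabla\phi_R|^2.
\end{equation*}
Dominated convergence handles the first summand. Since $f$ is weighted $L^p$ with exactly $p=\frac{2q}{q-2}$, Corollary \ref{cor:fgrad}(2) forces $\int f^2|\nabla\phi_R|^2\rho\to 0$, and Cauchy--Schwarz together with the uniform bound $\int \phi_R^2|\nabla f|^2\rho\leq \|\nabla f\|_2^2$ then drives the cross term to zero. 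Combining, $\mathcal{R}(f_R)\to\mathcal{R}(f)$ and hence $\mathcal{R}(f)\geq\lambda_1(\Sigma)$, giving $\tilde\lambda_1\geq\lambda_1(\Sigma)$ as required. The only non-trivial step is thus the a priori estimate on $\int|A|^2 f^2\rho$, which is precisely where the hypothesis $q\geq 2$ (and the resulting connectedness plus the existence of the positive eigenfunction) is used.
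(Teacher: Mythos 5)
Your proof is correct and follows essentially the same approach as the paper: truncate with the cutoff functions $\phi_R$, obtain the key a priori bound $\int_\Sigma |A|^2 f^2\rho<\infty$ by feeding the positive eigenfunction from Lemma~\ref{lem:specbot} into Lemma~\ref{lem:eigestimate}, then pass to the limit using dominated convergence and Corollary~\ref{cor:fgrad}. The only cosmetic difference is in the cross term $2\int \phi_R f\langle\nabla f,\nabla\phi_R\rangle\rho$: the paper sends this to zero by applying Corollary~\ref{cor:fgrad}(1) to $f\nabla f$ (which is weighted $L^{q/(q-1)}$ by H\"{o}lder with exponents $2$ and $p$), whereas you use Cauchy--Schwarz directly together with Corollary~\ref{cor:fgrad}(2); these are equivalent in content. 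You are also slightly more careful than the paper in explicitly invoking Lemma~\ref{lem:connectedness} to justify the hypotheses of Lemma~\ref{lem:specbot}.
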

\begin{proof}
Obviously we may assume that $\lambda_1=\lambda_1(\Sigma)>-\infty$. By using the global eigenfunction produced by Lemma \ref{lem:specbot} in Lemma \ref{lem:eigestimate}, we have that $|A|f$ is weighted $L^2$. Let $\phi_R$ be as in Section \ref{sec:intsing}. We will use the test functions $f_R = f \phi_R$ in the definition of $\lambda_1$.

Now since $f$ and $|A|f$ are weighted $L^2$, dominated convergence gives that $\int_\Sigma f_R^2\rho \rightarrow \int_\Sigma f^2\rho$ and $\int_\Sigma |A|^2 f_R^2\rho \rightarrow \int_\Sigma |A|^2 f^2\rho$ as $R\rightarrow \infty$. For the gradient term we have
\begin{equation}
\label{eq:lipcut}
\int_\Sigma |\nabla f_R|^2\rho =  \int_\Sigma (\phi_R^2|\nabla f|^2 + 2\langle \nabla f,\nabla\phi_R\rangle + f^2 |\nabla \phi_R|^2  )\rho.
\end{equation}
The second and third terms on the right tend to zero as $R\rightarrow \infty$, by parts (1) and (2) of Corollary \ref{cor:fgrad} respectively. Moreover, the first term tends to $\int_\Sigma |\nabla f|^2\rho$ by dominated convergence. Thus we have shown that $\int_\Sigma |\nabla f_R|^2 \rho \rightarrow \int_\Sigma |\nabla f|^2\rho$, and the lemma follows. 
\end{proof}

\begin{proposition}
\label{prop:eigcandidate}
Let $V$ be an orientable $F$-stationary $n$-varifold in $\mathbf{R}^{n+1}$ with finite entropy and $\mathcal{H}^{n-q}(\sing V)=0$ for some $q\geq 2$. Suppose that $v\neq 0$ is a $C^2$ function on $\Sigma=\reg V$ satisfying $Lv = -\lambda  v$, which is weighted $W^{1,2}$ and weighted $L^p$, $p= \frac{2q}{q-2}$. Then $\lambda_1(\Sigma) \leq \lambda$. 
\end{proposition}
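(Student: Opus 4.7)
The plan is to use $v$ itself as an (almost) test function in the Rayleigh quotient characterisation of $\lambda_1(\Sigma)$, after cutting off near $\sing V$ using the $\phi_R$ from Section \ref{sec:cutoff}. Concretely, I would take $f_R = \phi_R v$, which is Lipschitz with compact support in $\Sigma$ (since $\phi_R$ is supported away from $\sing V$ and in a bounded region). By definition of the first eigenvalue,
\begin{equation*}
\lambda_1(\Sigma) \leq R[f_R] := \frac{\int_\Sigma \bigl(|\nabla f_R|^2 - |A|^2 f_R^2 - \tfrac{1}{2} f_R^2\bigr)\rho}{\int_\Sigma f_R^2\rho},
\end{equation*}
and the goal is to show $R[f_R] \to \lambda$ as $R \to \infty$.

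The key computation is to expand this Rayleigh quotient using the eigenvalue equation. Writing $\phi_R^2 \nabla v = \nabla(\phi_R^2 v) - 2\phi_R v \nabla \phi_R$ and applying Lemma \ref{lem:compactibp} to the compactly supported function $\phi_R^2 v$ gives
\begin{equation*}
\int_\Sigma \phi_R^2 |\nabla v|^2 \rho = -\int_\Sigma \phi_R^2 v\, \mathcal{L}v\, \rho - 2\int_\Sigma \phi_R v \langle \nabla v, \nabla \phi_R\rangle \rho.
\end{equation*}
Substituting $\mathcal{L}v = -\lambda v - (|A|^2 + \tfrac{1}{2})v$ and expanding $|\nabla f_R|^2 = \phi_R^2|\nabla v|^2 + 2\phi_R v\langle \nabla v, \nabla \phi_R\rangle + v^2|\nabla \phi_R|^2$ via the product rule, the two cross-terms $\pm 2\int \phi_R v\langle \nabla v, \nabla \phi_R\rangle\rho$ cancel and one obtains the clean identity
\begin{equation*}
\int_\Sigma \bigl(|\nabla f_R|^2 - (|A|^2 + \tfrac{1}{2})f_R^2\bigr)\rho = \lambda \int_\Sigma \phi_R^2 v^2 \rho + \int_\Sigma v^2 |\nabla \phi_R|^2 \rho,
\end{equation*}
so that $R[f_R] = \lambda + \frac{\int v^2 |\nabla \phi_R|^2 \rho}{\int \phi_R^2 v^2 \rho}$.

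Finally I would pass to the limit $R \to \infty$. The denominator tends to $\int_\Sigma v^2 \rho$ by dominated convergence (using that $v$ is weighted $L^2$, as part of being weighted $W^{1,2}$), and this limit is strictly positive because $v$ is $C^2$ and not identically zero. For the correction term in the numerator, the weighted $L^p$ hypothesis with $p = \tfrac{2q}{q-2}$ is exactly what Corollary \ref{cor:fgrad}(2) requires in order to conclude $\int v^2 |\nabla \phi_R|^2 \rho \to 0$. Combining these gives $\lambda_1(\Sigma) \leq \lim_{R\to\infty} R[f_R] = \lambda$.

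The main thing to be careful about is justifying the integration by parts with the cutoff $\phi_R$, which is only Lipschitz rather than $C^2$ as required by Lemma \ref{lem:compactibp}; this is routine, either via smooth approximation of $\phi_R$ preserving the same gradient bounds, or by the standard extension of integration by parts to Lipschitz test functions. The essential point of the proposition is that the weighted $L^p$ growth hypothesis on $v$ is calibrated exactly to the cover estimate $\sum r_i^{n-q} < \epsilon$ which controls $|\nabla \phi_R|$ near $\sing V$; without this compatibility the error term $\int v^2|\nabla \phi_R|^2\rho$ need not vanish, and $v$ cannot be used as a test function despite failing to be compactly supported.
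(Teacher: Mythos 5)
Your proof is correct, and it takes a genuinely different and more self-contained route than the paper's. The paper's proof first assumes $\lambda_1(\Sigma)>-\infty$, invokes Lemma~\ref{lem:specbot} to produce a positive eigenfunction for $\lambda_1$, feeds that into Lemma~\ref{lem:eigestimate} to establish $|A|v\in L^2(\rho)$, uses H\"older to get $v\nabla v \in L^{q/(q-1)}(\rho)$, and only then applies Lemma~\ref{lem:lipcut} (to justify $v$ as a test function) together with Lemma~\ref{lem:ibp} (to integrate by parts over all of $\Sigma$) to compute $R[v]=\lambda$. You instead work with the genuinely compactly supported $f_R=\phi_R v$ and observe that the cross term $2\int \phi_R v\langle\nabla v,\nabla\phi_R\rangle\rho$ produced by expanding $|\nabla f_R|^2$ cancels exactly against the one produced by integrating $\phi_R^2 v\,\mathcal{L}v$ by parts, leaving the clean identity $R[f_R]=\lambda + \int v^2|\nabla\phi_R|^2\rho \big/ \int \phi_R^2 v^2\rho$. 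This sidesteps the need for the positive eigenfunction, for the $|A|v\in L^2$ estimate, and for the $L^{q/(q-1)}$ bound on $v\nabla v$: the only ingredients left are Corollary~\ref{cor:fgrad}(2) for the error term and dominated convergence for the denominator, both of which need exactly $v\in W^{1,2}\cap L^p(\rho)$. The tradeoff is that the paper's route also delivers the global identity $\int_\Sigma|\nabla v|^2\rho=\int_\Sigma v^2(\tfrac12+\lambda+|A|^2)\rho$, which yours does not, but that identity is not required for the present proposition. Two small remarks: Lemma~\ref{lem:compactibp} requires $u$ to be $C^1$ (not $C^2$ as you wrote), and its extension to Lipschitz $u$ with compact support is indeed routine and is also used implicitly in the paper (e.g.\ in the proof of Lemma~\ref{lem:ibp}); and the pointwise a.e.\ convergence $\phi_R\to 1$ needed for dominated convergence requires choosing the covering radii to tend to zero, which is always possible by the definition of Hausdorff measure and is implicit in the paper's own use of dominated convergence in Lemma~\ref{lem:lipcut}.
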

\begin{proof}
Obviously we may assume $\lambda_1=\lambda_1(\Sigma) >-\infty$.

Using the positive eigenfunction produced by Lemma \ref{lem:specbot} as in the proof of that lemma, we have by Lemma \ref{lem:eigestimate} that $|A| v$ is weighted $L^2$, and hence that $v\mathcal{L}v$ is weighted $L^1$. Again since $\frac{1}{2}+\frac{1}{p} = \frac{q-1}{q}$ we have that $\|v\nabla v\|_{\frac{q}{q-1}} \leq \|\nabla v\|_2 \|v\|_p<\infty$, so by Lemma \ref{lem:lipcut} we may use $v$ as a test function in the definition of $\lambda_1$, and moreover Lemma \ref{lem:ibp} allows us to integrate by parts: 
\begin{equation}
\int_\Sigma |\nabla v|^2 \rho =  \int_\Sigma v^2 \left(\frac{1}{2}+\lambda+ |A|^2\right)\rho.\end{equation} This implies that $\lambda_1 \leq \lambda$ as claimed.
\end{proof}

\begin{corollary}
\label{cor:ncandidate}
Let $V$ be an orientable $F$-stationary $n$-varifold in $\mathbf{R}^{n+1}$ with finite entropy and $\mathcal{H}^{n-q}(\sing V)=0$ for some $q>2$. Then $\lambda_1(V) \leq -\frac{1}{2}$, with equality if and only if $\supp V$ is a hyperplane. 
\end{corollary}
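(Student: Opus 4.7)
The plan is to apply Proposition \ref{prop:eigcandidate} to the translation eigenfunctions $v = \langle y,\nu\rangle$, which satisfy $Lv = \frac{1}{2} v$ (by the Colding--Minicozzi identity stated in Section \ref{sec:notation}), so that $Lv = -\lambda v$ with $\lambda = -\frac{1}{2}$. If $\lambda_1(V) = -\infty$ there is nothing to prove, so assume $\lambda_1(V)$ is finite. Lemma \ref{lem:specbot} then supplies a positive $C^2$ eigenfunction $u>0$ on $\Sigma := \reg V$, and Corollary \ref{lem:AL2} (whose hypothesis requires precisely $q > 2$) shows that $|A|$ is weighted $L^2$ on $\Sigma$. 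For any $y \in \mathbf{R}^{n+1}$, $v = \langle y,\nu\rangle$ is bounded by $|y|$ and so weighted $L^p$ for every $p$; moreover the Weingarten relation gives $\nabla v = -A(y^T)$, hence $|\nabla v|^2 \leq |y|^2|A|^2$ is weighted $L^1$ and $v$ is weighted $W^{1,2}$. For some $y$ the function $v$ is not identically zero, since otherwise the unit normal $\nu$ would be orthogonal to every direction. Proposition \ref{prop:eigcandidate} then yields $\lambda_1(V) \leq -\frac{1}{2}$.

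For the equality case, suppose $\lambda_1(V) = -\frac{1}{2}$. Each $v_y := \langle y,\nu\rangle$ is then a first eigenfunction satisfying the integrability hypotheses of the uniqueness half of Lemma \ref{lem:specbot}, so $v_y = c_y\, u$ on $\Sigma$ for some $c_y \in \mathbf{R}$, for every $y$. Running $y$ through a basis of $\mathbf{R}^{n+1}$ and setting $w := \sum_i c_{e_i} e_i$, we obtain $\nu \equiv w\,u$ pointwise on $\Sigma$. Since $|\nu| \equiv 1$, $u$ must be the positive constant $1/|w|$ (in particular $w \neq 0$), and consequently $\nabla v_y \equiv 0$ for every $y$; equivalently $A \equiv 0$ on $\Sigma$.

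Finally, the hypothesis $q > 2$ gives $\mathcal{H}^{n-1}(\sing V) = 0$, so Lemma \ref{lem:connectedness} makes $\Sigma$ connected. Being a connected totally geodesic hypersurface in $\mathbf{R}^{n+1}$, $\Sigma$ lies in a single hyperplane $H$, and the self-shrinker equation $\vec{H} = -\frac{1}{2}x^\perp \equiv 0$ forces $H$ to pass through the origin; stationarity of $V$ inside $H$ together with the volume bound of Lemma \ref{lem:volumegrowth} then force $\supp V = H$ via the constancy theorem. The main technical obstacle throughout is the justification of integration by parts for the non-compactly supported test functions $\langle y,\nu\rangle$, which is precisely the role that the weighted $L^2$ control on $|A|$ plays inside Proposition \ref{prop:eigcandidate} and the uniqueness half of Lemma \ref{lem:specbot}.
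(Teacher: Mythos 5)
Your proposal is correct and follows essentially the same route as the paper: plug the translation eigenfunctions $\langle y,\nu\rangle$ into Proposition \ref{prop:eigcandidate} after checking integrability via the weighted $L^2$ bound on $|A|$ from Corollary \ref{lem:AL2}, and use the uniqueness in Lemma \ref{lem:specbot} to conclude $\nu$ is constant in the equality case. (The paper picks $y=\nu(p)$ at a fixed point to guarantee $v\not\equiv 0$, whereas you argue by contradiction; both are fine.) One small ordering remark: you invoke Lemma \ref{lem:connectedness} only at the very end, but connectedness of $\reg V$ is already needed to apply Lemma \ref{lem:specbot} for both existence and uniqueness of the first eigenfunction, so that reduction should be made at the outset — as indeed $q>2$ gives $\mathcal{H}^{n-1}(\sing V)=0$ so it is available from the start.
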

\begin{proof}
Clearly we may assume $\lambda_1>-\infty$, and by Lemma \ref{lem:connectedness} we may assume that $\Sigma = \reg V$ is connected.  Fix a point $p\in \Sigma$ and set $v(x)=\langle \nu(p),\nu(x)\rangle$. Then $|v|\leq 1$ is bounded, and using the positive eigenfunction from Lemma \ref{lem:specbot} for Corollary \ref{lem:AL2} we see that $|\nabla v|\leq |A|$ is weighted $L^2$. The upper bound for $\lambda_1$ then follows from Proposition \ref{prop:eigcandidate} since $Lv=\frac{1}{2}v$. Moreover, if equality holds then since $L\langle y,\nu\rangle = \frac{1}{2}\langle y,\nu\rangle$ for any fixed $y$, the uniqueness in Lemma \ref{lem:specbot} implies that $\nu$ is constant on $\Sigma$. The constancy theorem then implies that $\supp V$ is a hyperplane. 
 \end{proof}

\begin{corollary}
\label{cor:Hcandidate}
Let $V$ be an orientable $F$-stationary $n$-varifold in $\mathbf{R}^{n+1}$ with finite entropy and $\mathcal{H}^{n-q}(\sing V)=0$ for some $q>2$. If $H$ is not identically zero on $\Sigma=\reg V$, then we have $\lambda_1(\Sigma) \leq -1$, with equality if and only if $H$ does not change sign on $\Sigma$.
\end{corollary}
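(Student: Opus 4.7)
The plan is to test the inequality in Proposition \ref{prop:eigcandidate} against $v = H$. Since $LH = H$ on any orientable smooth self-shrinker, once the integrability hypotheses of that proposition are checked with eigenvalue $\lambda = -1$, the bound $\lambda_1(\Sigma) \leq -1$ follows immediately. The hypothesis that $H$ is not identically zero supplies the required $v \neq 0$.

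To verify the integrability, I would exploit the self-shrinker equation $\vec{H} = -\frac{1}{2}x^\perp$, which gives $H = -\frac{1}{2}\langle x,\nu\rangle$ on $\Sigma$ and hence $|H| \leq \frac{1}{2}|x|$; by finite entropy and Lemma \ref{lem:volumegrowth} (Euclidean volume growth), such a polynomially growing function lies in weighted $L^p$ for every $p \in (0,\infty)$. Differentiating and using $\nabla_i \nu = -A_{ij} e_j$ yields $|\nabla H| \leq \frac{1}{2}|A||x|$, and Corollary \ref{lem:AL2} applied with $k = 1$ places $|A||x|$ in weighted $L^2$. Thus $H$ is simultaneously weighted $W^{1,2}$ and weighted $L^p$ for the exponent $p = \frac{2q}{q-2}$ required, and Proposition \ref{prop:eigcandidate} yields the inequality.

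For the equality characterization, I would first observe that $q > 2$ forces $\mathcal{H}^{n-1}(\sing V) = 0$, so Lemma \ref{lem:connectedness} makes $\Sigma = \reg V$ connected. In the forward direction, assuming $\lambda_1(\Sigma) = -1$, Lemma \ref{lem:specbot} produces a positive eigenfunction $u$ with $Lu = u$; the integrability of $H$ already established is exactly what the uniqueness clause of that lemma demands, forcing $H = cu$ for some constant $c$, which is nonzero by assumption, so $H$ has constant sign. For the converse, supposing $H \not\equiv 0$ does not change sign (say $H \geq 0$), I would rewrite $LH = H$ as the linear second-order elliptic equation $\mathcal{L}H + (|A|^2 - \tfrac{1}{2})H = 0$ on the connected analytic hypersurface $\Sigma$ and apply the strong maximum principle to upgrade $H \geq 0$ to $H > 0$ everywhere; Lemma \ref{lem:specbotlower} applied with this positive eigenfunction then delivers $\lambda_1(\Sigma) \geq -1$.

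I do not anticipate serious obstacles: the heavy lifting — weighted $L^2$ control of $|A||x|$ near the singular set and uniqueness of positive eigenfunctions — has been packaged into Corollary \ref{lem:AL2} and Lemma \ref{lem:specbot}. The only subtlety worth flagging is the analyticity of $\reg V$ needed to justify the strong maximum principle, which is standard via elliptic regularity applied to the real-analytic self-shrinker PDE.
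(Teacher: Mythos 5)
Your proof is correct and follows essentially the same route as the paper for the main inequality and the forward direction of the equality case: test $v=H$ in Proposition \ref{prop:eigcandidate}, using the self-shrinker equation and Corollary \ref{lem:AL2} to verify the required weighted integrability, then invoke the uniqueness clause of Lemma \ref{lem:specbot} to conclude $H=cu$ has a sign when $\lambda_1=-1$. One small bookkeeping point the paper is careful about and you should be too: to invoke Corollary \ref{lem:AL2} you first need a globally defined positive eigenfunction, which comes from Lemma \ref{lem:specbot} and therefore requires first reducing to the case $\lambda_1(\Sigma)>-\infty$ and $\Sigma$ connected.

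Where you go beyond the paper is the converse direction of the ``if and only if,'' namely that $H$ having a sign forces $\lambda_1(\Sigma)=-1$; the paper's proof leaves this implicit. Your argument for it is sound: upgrade $H\geq 0$, $H\not\equiv 0$ to $H>0$ by the strong maximum principle applied to the linear elliptic equation $\mathcal{L}H+(|A|^2-\tfrac12)H=0$, then use Lemma \ref{lem:specbotlower} with $u=H$ and $\lambda=-1$ to get $\lambda_1(\Sigma)\geq -1$. One correction to the caveat you raise at the end: analyticity of $\reg V$ plays no role here. The strong maximum principle for a linear second-order uniformly elliptic operator with continuous coefficients (even with zeroth-order term of arbitrary sign, by absorbing the positive part) already gives that a nonnegative solution vanishing at an interior point vanishes identically on the connected component; $C^2$ regularity and connectedness of $\Sigma$ are all that is required, and both are available.
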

\begin{proof}
From the self-shrinker equation $H=\frac{1}{2}\langle x,\nu\rangle$ we see that $|H|\leq |x|$ is weighted $L^p$ for any $p\in(0,\infty)$. Moreover, differentiating the self-shrinker equation leads to $|\nabla H| \leq |A||x|$. 

Now clearly we may assume $\lambda_1>-\infty$, and by Lemma \ref{lem:connectedness} we may assume that $\Sigma = \reg V$ is connected. Then using the positive eigenfunction $u$ of Lemma \ref{lem:specbot} for Corollary \ref{lem:AL2} implies that $|\nabla H|\leq |A||x|$ is weighted $L^2$. The result follows from Proposition \ref{prop:eigcandidate} since $LH=H$. If equality holds, the uniqueness of Lemma \ref{lem:specbot} implies that $H=c u$ does not change sign.
\end{proof}


\subsection{Constructing unstable variations}

Here we construct $F$-unstable variations when the first stability eigenvalue $\lambda_1$ is small. 
We first consider the easy case when $\lambda_1 < -\frac{3}{2}$ which does not require any assumptions on the singular set. The proof is essentially the same as in \cite[Lemma 12.4]{CMgeneric}, but we include it here for completeness.

\begin{proposition}
\label{prop:unstable32}
Let $V$ be an orientable $F$-stationary $n$-varifold in $\mathbf{R}^{n+1}$ with finite entropy and regular part $\Sigma =\reg V$. If $\lambda_1(\Sigma) < -\frac{3}{2}$, then there exists a domain $\Omega \ssubset \Sigma$ such that if $u$ is a Dirichlet eigenfunction for $\lambda_1(\Omega)$, then for any $a\in\mathbf{R}$ and any $y\in\mathbf{R}^{n+1}$ we have \begin{equation}\int_\Omega \left( -uLu+2uaH-a^2H^2+u\langle y,\nu\rangle - \frac{\langle y,\nu\rangle^2}{2}\right)\rho <0.\end{equation} Consequently, $V$ is $F$-unstable.
\end{proposition}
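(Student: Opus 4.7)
The plan is to build an explicit unstable variation from a first Dirichlet eigenfunction on a suitable subdomain and then bound the second variation (as given by Proposition \ref{lem:2ndvarcrit}) by pure Cauchy--Schwarz, optimising separately in the translation parameter $y$ and the dilation parameter $a$.

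Since $\lambda_1(\Sigma)<-\tfrac{3}{2}$ is defined as an infimum of $\lambda_1(\Omega)$ over domains $\Omega\ssubset\Sigma$, I first choose a smooth connected domain $\Omega \ssubset \Sigma$ with $\lambda_1(\Omega)<-\tfrac{3}{2}$, and let $u$ be its first Dirichlet eigenfunction, which we may take to be positive in $\Omega$, vanishing on $\partial\Omega$, and smooth up to the boundary. Extending $u$ by zero gives a Lipschitz function on $\Sigma$ that is $C^2$ on $\reg V$ and vanishes in a neighbourhood of $\sing V$, which in turn produces a normal variation $V_s$ of $V$ (generated by a Lipschitz vector field $X=u\nu$ on a tubular neighbourhood of $\Omega$, extended by zero) that is compactly supported in $\reg V$, in the sense of Definition \ref{def:normalvar}. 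By Proposition \ref{lem:2ndvarcrit} the second variation $\pr_s^2|_{s=0}F_{x_s,t_s}(V_s)$ for arbitrary variations $x_s$ of $0$ and $t_s$ of $1$, with $x'_0=y$ and $t'_0=a$, is exactly the integral displayed in the statement, so it suffices to prove the claimed integral inequality for every $a\in\mathbf{R}$ and every $y\in\mathbf{R}^{n+1}$.

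The next step is to estimate the integral. Since $Lu=-\lambda_1(\Omega)u$ on $\Omega$ and $u$ vanishes on $\partial\Omega$, we have $\int_\Omega -uLu\,\rho = \lambda_1(\Omega)\int_\Omega u^2\rho$. For the $a$-dependent terms I optimise over $a\in\mathbf{R}$ and apply Cauchy--Schwarz:
\begin{equation*}
2a\int_\Omega uH\,\rho-a^2\int_\Omega H^2\,\rho \;\leq\; \frac{\bigl(\int_\Omega uH\,\rho\bigr)^2}{\int_\Omega H^2\,\rho} \;\leq\; \int_\Omega u^2\,\rho,
\end{equation*}
where in the boundary case $\int_\Omega H^2\rho=0$ both sides reduce to zero. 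For the $y$-dependent terms, setting $c^2=\int_\Omega \langle y,\nu\rangle^2\rho$ and using Cauchy--Schwarz followed by AM--GM gives
\begin{equation*}
\int_\Omega u\langle y,\nu\rangle\,\rho - \tfrac{1}{2}c^2 \;\leq\; \|u\|_{L^2(\rho)}\,c - \tfrac{1}{2}c^2 \;\leq\; \tfrac{1}{2}\int_\Omega u^2\,\rho,
\end{equation*}
again with equality reducing correctly in the degenerate case $c=0$ (which forces $\langle y,\nu\rangle\equiv0$ on $\Omega$). Adding the three contributions yields
\begin{equation*}
\int_\Omega\left(-uLu+2uaH-a^2H^2+u\langle y,\nu\rangle-\tfrac{1}{2}\langle y,\nu\rangle^2\right)\rho \;\leq\; \Bigl(\lambda_1(\Omega)+\tfrac{3}{2}\Bigr)\int_\Omega u^2\,\rho \;<\; 0,
\end{equation*}
which is the claimed inequality (using $\langle y,\nu\rangle^2=|y^\perp|^2$ on the orientable hypersurface $\reg V$). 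Combined with Proposition \ref{lem:2ndvarcrit}, this shows $\pr_s^2|_{s=0}F_{x_s,t_s}(V_s)<0$ for every choice of $x_s,t_s$, giving $F$-instability of $V$.

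There is no essential obstacle here: the argument is a clean double Cauchy--Schwarz that decouples the modes. The only points to verify carefully are (i) that the variation generated by extending $u$ by zero satisfies the regularity required by Definition \ref{def:normalvar} (which it does, since $u$ is Lipschitz, vanishes near $\sing V$, and is $C^2$ on $\reg V$; one may smooth at $\partial\Omega$ or appeal to the density of smooth test functions by shrinking $\Omega$ slightly while preserving $\lambda_1(\Omega)<-\tfrac{3}{2}$), and (ii) the degenerate cases $\int_\Omega H^2\rho=0$ and $\langle y,\nu\rangle\equiv0$ on $\Omega$, which only make the inequalities easier.
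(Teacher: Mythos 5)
Your proof is correct and takes essentially the same approach as the paper. The only cosmetic difference is that you absorb the cross terms with Cauchy--Schwarz on the integrals and then optimise in $a$ and $c$, whereas the paper completes the square pointwise $\bigl(2uaH - a^2H^2 = u^2 - (u-aH)^2$ and $u\langle y,\nu\rangle - \tfrac{1}{2}\langle y,\nu\rangle^2 = \tfrac{1}{2}u^2 - \tfrac{1}{2}(u-\langle y,\nu\rangle)^2\bigr)$; both yield the identical bound $\bigl(\tfrac{3}{2}+\lambda_1(\Omega)\bigr)\int_\Omega u^2\rho < 0$.
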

\begin{proof}
Since $\lambda_1(\Sigma)<-\frac{3}{2}$ we may choose a domain $\Omega \ssubset \Sigma$ so that $\lambda_1(\Omega)<-\frac{3}{2}$. Then completing the square, the left hand side above is given by
\begin{equation} \int_\Omega \left( \left(\frac{3}{2}+\lambda_1(\Omega)\right)u^2  - (u-aH)^2 - \frac{1}{2} (u-\langle y,\nu\rangle)^2\right)\rho <0.\end{equation} so we are done by the second variation formula Proposition \ref{lem:2ndvarcrit}.
\end{proof}

We now construct $F$-unstable variations when $\lambda_1 < -1$. The key, as in \cite[Section 9.2]{CMgeneric}, is to quantify an ``almost orthogonality'' between the first eigenfunction and the eigenfunction $H$, but our analysis of the cross term differs significantly - instead of estimating boundary terms arising from integration by parts, we use our chosen cutoff functions adapted to sufficiently large domains to estimate the cross term directly. To do so, we require that the singular set is small enough that we may use the previous results of this section.

\begin{proposition}
\label{prop:unstable}
Let $V$ be an orientable $F$-stationary $n$-varifold in $\mathbf{R}^{n+1}$ with finite entropy and regular part $\Sigma=\reg V$. Suppose that $\mathcal{H}^{n-4}(\sing V)=0$. If $\lambda_1(\Sigma) <-1$, then there exists a domain $\Omega \ssubset \Sigma$ such that if $u$ is a Dirichlet eigenfunction for $\lambda_1(\Omega)$, then for any $a\in\mathbf{R}$ and any $y\in\mathbf{R}^{n+1}$ we have \begin{equation}\int_\Omega \left( -uLu+2uaH-a^2H^2 +u\langle y,\nu\rangle - \frac{\langle y,\nu\rangle^2}{2}\right)\rho <0.\end{equation} Consequently, $V$ is $F$-unstable.
\end{proposition}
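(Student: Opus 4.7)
The strategy follows the Colding-Minicozzi scheme: exploit that $H$ and $\langle y,\nu\rangle$ are eigenfunctions of $L$ with eigenvalues $-1$ and $-\tfrac{1}{2}$, both distinct from $\lambda_1(\Sigma)<-1$, and produce a Dirichlet eigenfunction almost orthogonal to both. The new ingredient is to quantify the near-orthogonality via the cutoffs $\phi_R$ of Section \ref{sec:cutoff} rather than by boundary-integral analysis on $\partial B_R$. Substituting $Lu=-\lambda_1(\Omega)u$ and completing squares, the integrand of Proposition \ref{lem:2ndvarcrit} becomes
$$\bigl(\lambda_1(\Omega)+\tfrac{3}{2}\bigr)u^2-(u-aH)^2-\tfrac{1}{2}(u-\langle y,\nu\rangle)^2.$$
Normalizing $\|u\|_{L^2(\rho)}=1$ and maximizing over $a\in\mathbf{R}$, $y\in\mathbf{R}^{n+1}$ reduces the assertion to
$$\lambda_1(\Omega)+\frac{\langle u,H\rangle^2}{\|H\|^2}+\tfrac{1}{2}\,b^{T}B^{-1}b<0,$$
with $b_j=\langle u,\langle e_j,\nu\rangle\rangle$ and $B_{ij}=\langle\langle e_i,\nu\rangle,\langle e_j,\nu\rangle\rangle$ in $L^2(\rho)$ (the $\langle u,H\rangle^2/\|H\|^2$ term is absent when $H\equiv 0$). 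Thus the task is to produce $\Omega$ and $u$ with $\lambda_1(\Omega)$ close to $\lambda_1(\Sigma)$ and cross terms $\langle u,H\rangle$, $b(u)$ small.

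\textbf{Exhaustion and cutoff cross-term estimate.} Take a smooth exhaustion $\{\Omega_R\}$ of $\reg V$ with $\overline{\Omega_R}\subset\reg V$; by Lemma \ref{lem:lipcut} and domain monotonicity, $\lambda_1(\Omega_R)\searrow\lambda_1(\Sigma)$. Let $u_R>0$ denote the normalized first Dirichlet eigenfunctions. From $Lu_R=-\lambda_1(\Omega_R)u_R$ and $LH=H$ one has the pointwise identity $(1+\lambda_1(\Omega_R))u_R H=u_R\mathcal{L}H-H\mathcal{L}u_R$ (the $(|A|^2+\tfrac{1}{2})$-contributions cancel). Testing against $\phi_R$ and applying Lemma \ref{lem:compactibp} to the compactly-supported Lipschitz pairs $(\phi_R u_R,H)$ and $(\phi_R H,u_R)$, subtraction kills all the interior derivative terms and leaves
$$(1+\lambda_1(\Omega_R))\int\phi_R u_R H\rho=\int\langle\nabla\phi_R,\,H\nabla u_R-u_R\nabla H\rangle\rho.$$
Cauchy-Schwarz bounds the right side by $\|\nabla\phi_R\|_{L^2(\rho)}$ times the $L^2(\rho)$-norm of $|H||\nabla u_R|+u_R|\nabla H|$; the first factor vanishes as $R\to\infty$ by Lemma \ref{lem:gradconv} under $\mathcal{H}^{n-4}(\sing V)=0$, while the second stays finite via $|H|\le|x|/2$, $|\nabla H|\le\tfrac{1}{2}|A||x|$, local $L^\infty$ control of $u_R$ from Harnack, and the weighted $L^2$ estimates on $|A|$ and $|\nabla\log u_R|$ of Corollary \ref{lem:AL2}. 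Since $1+\lambda_1(\Omega_R)\to 1+\lambda_1(\Sigma)\ne 0$, this yields $\int\phi_R u_R H\rho\to 0$; a routine Cauchy-Schwarz estimate then compares $\int\phi_R u_R H\rho$ with $\int u_R H\rho$ (the support of $1-\phi_R$ has vanishing $\rho$-weighted measure by combining Gaussian decay on $\{|x|\ge R-3\}$ with $\sum_i r_i^n\le \sum_i r_i^{n-4}<\epsilon$), giving $\langle u_R,H\rangle\to 0$. The identical argument with $L\langle y,\nu\rangle=\tfrac{1}{2}\langle y,\nu\rangle$ produces $b(u_R)\to 0$. Combined with $\lambda_1(\Omega_R)\to\lambda_1(\Sigma)<-1$, the reduced inequality above holds for all sufficiently large $R$, furnishing the required $F$-unstable variation.

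\textbf{Main obstacle.} The crux is the cutoff residual estimate. In \cite{CMgeneric} the analogous near-orthogonality is obtained by integration by parts on $\Sigma\cap B_R$, whose boundary integrals on $\Sigma\cap\partial B_R$ are absorbed by exponential Gaussian decay; here one must avoid any boundary analysis adjacent to $\sing V$ where no a priori regularity is available. The replacement by a single volume-type estimate against the annular cutoff structure is precisely what the construction of $\phi_R,\psi_R$ in Section \ref{sec:cutoff} together with the $L^2$ bounds of Lemma \ref{lem:AL2grad} were engineered to furnish, with the hypothesis $\mathcal{H}^{n-4}(\sing V)=0$ entering exactly through the requirement $\|\nabla\phi_R\|_{L^2(\rho)}\to 0$.
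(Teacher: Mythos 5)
Your overall strategy matches the paper's: reduce via the second variation formula, absorb the $\langle y,\nu\rangle$ cross-term, observe $(1+\lambda_1(\Omega))u H = u\mathcal{L}H - H\mathcal{L}u$, multiply by a cutoff and integrate by parts, then show the resulting boundary residual tends to zero as $R\to\infty$. However, the central estimate does not close as written.

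After integrating by parts you are left with a term of the form $\int\langle\nabla\phi_R, H\nabla u_R - u_R\nabla H\rangle\rho$, and you bound it by $\|\nabla\phi_R\|_{L^2(\rho)}\cdot\||H||\nabla u_R|+u_R|\nabla H|\|_{L^2(\rho)}$. This Cauchy--Schwarz pairing is the wrong one: the second factor involves $\int H^2|\nabla u_R|^2\rho$ with no normalization by $u_R$, and there is no reason this is bounded uniformly in $R$. You invoke Corollary \ref{lem:AL2} for the $L^2$ bound on $|\nabla\log u_R|$, but that corollary applies only to a positive eigenfunction defined on \emph{all} of $\Sigma=\reg V$, not to the Dirichlet eigenfunctions $u_R$ on exhausting subdomains $\Omega_R$; Harnack gives only domain-dependent $L^\infty$ control, and the Rayleigh identity $\int|\nabla u_R|^2\rho = (\lambda_1(\Omega_R)+\tfrac12)\int u_R^2\rho + \int|A|^2u_R^2\rho$ makes the bound on $\int|\nabla u_R|^2\rho$ circular with the bound on $\int|A|^2u_R^2\rho$.

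The paper fixes precisely this by pairing differently: it factors $u_R$ against the remaining weight, writing $\int\phi_R|\nabla\phi_R||H||\nabla u_R|\rho \leq (\int u_R^2\rho)^{1/2}\,(\int H^2\phi_R^2|\nabla\phi_R|^2|\nabla\log u_R|^2\rho)^{1/2}$ (and similarly for the other term). The critical point is that the second factor now carries $\phi_R^2|\nabla\phi_R|^2$ \emph{inside} the integral — it is localized to the thin annuli around $\sing V$ and the boundary of $B_R$ — and the quantity $\int(|A|^2+|\nabla\log u_R|^2)\phi_R^2|\nabla\phi_R|^2\rho$ is bounded by Lemma \ref{lem:AL2grad}, which is precisely engineered to apply to eigenfunctions $u_R$ on subdomains $\Omega\supset\supp\phi_R$ rather than on all of $\Sigma$. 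The "merging" of the cutoff gradient and $|\nabla\log u_R|$ into a single localized integral — rather than a product of two $L^2$ norms — is what makes the residual vanish under $\mathcal{H}^{n-4}(\sing V)=0$, and is exactly the ingredient missing from your estimate.

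A secondary remark: your completed-square reduction tracks near-orthogonality to $\langle y,\nu\rangle$ via $\tfrac12 b^T B^{-1} b$, which requires $B$ nondegenerate (and so would fail if $V$ splits off a line). The paper simply absorbs $u\langle y,\nu\rangle$ into $-\tfrac12 u^2-\tfrac12\langle y,\nu\rangle^2$ outright, using up half the slack $\lambda_1+\tfrac12<-\tfrac12$; only the $H$-cross-term then needs near-orthogonality. This sidesteps the degeneracy issue and is the reason the paper only has to chase a single cross-term.
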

\begin{proof}
As before we can absorb the cross term $u\langle y,\nu\rangle$ using $-\frac{1}{2}u^2$ and $-\frac{1}{2}\langle y,\nu\rangle^2$, so the left hand side is bounded above by \begin{equation}\label{eq:unstablegoal}\int_\Omega \left( \left(\frac{1}{2}+\lambda_1(\Omega)\right)u^2 + 2uaH-a^2H^2\right)\rho.\end{equation} If $H$ is identically zero on $\Sigma$ then we are done, so henceforth we assume this is not the case.

By Proposition \ref{prop:unstable32} we may assume $-\frac{3}{2}\leq \lambda_1(\Sigma)<-1$. Also by Lemma \ref{lem:connectedness} we may assume that $\Sigma$ is connected. We now claim that we can find a domain $\Omega\ssubset \Sigma$ with $\lambda_1(\Omega)<-1$ and for which the cross term can be absorbed by: \begin{equation}\label{eq:uHgoal}\left(\int_\Omega uH\rho\right)^2 \leq \frac{1}{2}\left(\int_\Omega H^2\rho\right) \left(\int_\Omega u^2\rho\right).\end{equation} 

Given the claim, the proof proceeds by again completing the square: Using (\ref{eq:uHgoal}) to bound the cross term, the expression (\ref{eq:unstablegoal}) is bounded above by \begin{equation} \left(1+\lambda_1(\Omega)\right)\left(\int_\Omega u^2\rho\right) - \left(\frac{1}{\sqrt{2}}\left(\int_\Omega u^2\rho\right)^\frac{1}{2} - |a|\left(\int_\Omega H^2 \rho\right)^\frac{1}{2}\right)^2<0,\end{equation} which is strictly negative since $\lambda_1(\Omega)<-1$. This implies that $V$ is $F$-unstable by the second variation formula Proposition \ref{lem:2ndvarcrit}. 

To prove the claim, we let $R>4$, set $\epsilon= R^{-3}$ and cover the singular set as in Section \ref{sec:cutoff}: $\sing V \cap B_{R} \subset \bigcup_{i=1}^m B_{r_i}(p_i)$, with $\sum_i r_i^{n-4} <\epsilon $ and $r_i<1$ for each $i$. Now we let $\phi_R=\phi_{R,\epsilon}$ be as in Section \ref{sec:cutoff} and take a domain $\Omega=\Omega_R $ such that \begin{equation}\label{eq:gooddomain} \supp \phi_R \subset \Omega_R\ssubset  \Sigma\cap B_R .\end{equation} 

Then the $\Omega=\Omega_R$ must exhaust $\Sigma$ as $R\rightarrow \infty$, so by domain monotonicity of the first eigenvalue there exists a $\delta_0>0$ such that \begin{equation}\label{eq:uHclaim1}\lambda_1(\Omega)\leq -1-\delta_0\end{equation} for any $R$ sufficiently large.

To get (\ref{eq:uHgoal}) we give ourselves some room using the cutoff function $\phi_R^2\leq 1$, \begin{eqnarray}\label{eq:uHcross} \left|\int_\Omega uH\rho\right| &=& \left|\int_\Omega (uH\phi_R^2\rho + uH(1-\phi_R^2)\rho)\right| \\\nonumber&\leq& \left|\int_\Omega uH\phi_R^2 \rho\right| + \int_{\Omega \cap \supp(1-\phi_R^2)} |uH|\rho\\\nonumber&\leq& \left|\int_\Omega uH\phi_R^2 \rho\right| + \left(\int_\Omega u^2\rho\right)^\frac{1}{2}\left(\int_{\Omega \cap \supp(1-\phi_R^2)} H^2\rho\right)^\frac{1}{2}.
\end{eqnarray}

We can crudely estimate using $|H|\leq |x|\leq R$ on $B_R$ that \begin{eqnarray} \int_{\Omega \cap \supp(1-\phi_R^2)} H^2\rho &\leq& R^2\left(\int_{\Sigma\cap B_R \setminus B_{R-3}} \rho + \sum_{i=1}^m \int_{\Sigma \cap B_{3r_i}(p_i) } \rho\right)\\\nonumber &\leq& C_V R^2\left( R^{n}\e^{-\frac{(R-3)^2}{4}} + \sum_{i=1}^m 3^n C_V r_i^n\right) \\\nonumber&\leq&C_V (R^{n+2}\e^{-\frac{(R-3)^2}{4}} + 3^n R^2 \epsilon)  ,\end{eqnarray} 
where $C_V$ is the volume growth constant, and we have used that the $r_i <1$. 

For the other term, we note that \begin{equation}H\mathcal{L}u - u\mathcal{L}H = HLu-uLH = (-\lambda_1(\Omega)-1)uH\end{equation} on $\Omega$. Setting $\alpha = -\lambda_1(\Omega)-1 \in [\delta_0,\frac{1}{2}]$, we then have \begin{eqnarray}
\int_\Omega uH\phi_R^2 \rho &=& \frac{1}{\alpha}\int_\Omega \phi_R^2(H\mathcal{L}u-u\mathcal{L}H)\rho \\\nonumber&=& \frac{2}{\alpha}\int_\Omega \phi_R\langle \nabla \phi_R, u\nabla H-H\nabla u\rangle \rho,\end{eqnarray} where we integrated by parts for the second equality. Therefore  \begin{equation} \left| \int_\Omega uH\phi_R^2 \rho \right| \leq  \frac{2}{\alpha} \int_\Omega \phi_R|\nabla \phi_R|(|u\nabla H|+|H\nabla u|)\rho.
\end{equation}

We estimate the gradient terms as follows: First, Cauchy-Schwarz gives \begin{equation}\int_\Omega \phi_R |\nabla \phi_R| |u\nabla H| \rho\leq \left(\int_\Omega u^2 \rho\right)^\frac{1}{2}\left( \int_\Omega \phi_R^2|\nabla \phi_R|^2|\nabla H|^2\rho \right)^\frac{1}{2}.\end{equation} Using $|\nabla H|\leq |A||x|\leq |A| R$ on $B_R$, we have \begin{equation} \int_\Omega \phi_R^2|\nabla \phi_R|^2|\nabla H|^2\rho \leq R^2\int_\Omega \phi_R^2|\nabla \phi_R|^2|A|^2\rho.\end{equation}

For the second gradient term, since $u$ is a first eigenfunction of $L$ on $\Omega$, we may assume without loss of generality that $u>0$ on $\Omega$. Cauchy-Schwarz then gives \begin{equation}\int_\Omega \phi_R |\nabla \phi_R| |H\nabla u| \rho\leq \left(\int_\Omega u^2 \rho\right)^\frac{1}{2}\left( \int_\Omega H^2 \phi_R^2|\nabla \phi_R|^2\frac{|\nabla u|^2}{u^2}\rho \right)^\frac{1}{2}.\end{equation} Again using $|H|\leq |x|\leq R$ on $B_R$, we have \begin{equation} \int_\Omega  H^2 \phi_R^2 |\nabla \phi_R|^2\frac{|\nabla u|^2}{u^2}\rho \leq R^2\int_\Omega \phi_R^2|\nabla \phi_R|^2\frac{|\nabla u|^2}{u^2}\rho.\end{equation}

But now by Lemma \ref{lem:AL2grad}, since $|\lambda_1(\Omega)|\leq |\lambda_1(\Sigma)| \leq \frac{3}{2}$, we have \begin{equation}
\int_\Omega (|A|^2+|\nabla \log u|^2)\phi_R^2 |\nabla \phi_R|^2 \rho \leq 268 C_V (R^n \e^{-\frac{(R-4)^2}{4}} + 4^{n}\epsilon).
\end{equation}

Putting all our estimates into (\ref{eq:uHcross}), using that $\alpha\geq \delta_0$, we obtain that \begin{equation} \frac{\left|\int_\Omega uH\rho \right|}{\left(\int_\Omega u^2\rho\right)^\frac{1}{2}} \leq C\left(R^{n+2} \e^{-\frac{(R-4)^2}{4}} + 4^{n}R^2\epsilon\right)^\frac{1}{2},\end{equation} where $C= \left(1+ \frac{2\sqrt{268}}{\delta_0}\right)\sqrt{C_V}$ does not depend on $R$. Since we chose $\epsilon = R^{-3}$, the right hand side tends to zero as $R\rightarrow\infty$. This shows that we can make $\frac{\left|\int_\Omega uH\rho \right|}{\left(\int_\Omega u^2\rho\right)^\frac{1}{2}}$ as small as we like by choosing $R$ large. But since $H$ is not identically zero, and since the $\Omega_R$ form an exhaustion of $ \Sigma$, we see that $\int_\Omega H^2 \rho$ has a uniform positive lower bound $\delta_1^2$ for sufficiently large $R$. Choosing $R$ large enough so that $\frac{\left|\int_\Omega uH\rho \right|}{\left(\int_\Omega u^2\rho\right)^\frac{1}{2}} < \frac{1}{\sqrt{2}}\delta_1$ will satisfy the condition (\ref{eq:uHgoal}). Together with (\ref{eq:uHclaim1}) this establishes the claim and thus concludes the proof. 

\end{proof}

Finally, we briefly record the construction of $F$-unstable variations of stationary cones. 

\begin{proposition}
\label{prop:unstablecone}
Let $V$ be an orientable stationary $n$-cone in $\mathbf{R}^{n+1}$ so that $H=0$ on $\Sigma=\reg V$. If $\lambda_1(\Sigma) <-\frac{1}{2}$, then there exists a domain $\Omega \ssubset \Sigma$ such that if $u$ is a Dirichlet eigenfunction for $\lambda_1(\Omega)$, then for any $y\in\mathbf{R}^{n+1}$ we have \begin{equation}\int_\Omega \left( -uLu+u\langle y,\nu\rangle - \frac{\langle y,\nu\rangle^2}{2}\right)\rho <0.\end{equation} Consequently, $V$ is $F$-unstable.
\end{proposition}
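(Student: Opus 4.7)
The plan is to essentially mirror the proof of Proposition \ref{prop:unstable32}, but the situation is even cleaner since $H \equiv 0$ on the regular part of a stationary cone. The $H$--cross terms that caused the serious analytical difficulties in Proposition \ref{prop:unstable} simply disappear, and no hypothesis on the singular set is required for this particular step.

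First, using $\lambda_1(\Sigma) < -\tfrac{1}{2}$ together with the fact that $\lambda_1(\Sigma) = \inf_\Omega \lambda_1(\Omega)$ (that is, domain monotonicity of the first Dirichlet eigenvalue), I would fix some domain $\Omega \ssubset \Sigma$ with $\lambda_1(\Omega) < -\tfrac{1}{2}$, and let $u$ be a first Dirichlet eigenfunction on $\Omega$, so that $Lu = -\lambda_1(\Omega)\, u$ pointwise on $\Omega$. In particular $-uLu = \lambda_1(\Omega)\, u^2$.

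Next I would use the absorbing inequality $u\langle y,\nu\rangle \leq \tfrac{1}{2}u^2 + \tfrac{1}{2}\langle y,\nu\rangle^2$ to estimate the integrand pointwise,
\begin{equation*}
-uLu + u\langle y,\nu\rangle - \tfrac{1}{2}\langle y,\nu\rangle^2 \;\leq\; \bigl(\tfrac{1}{2}+\lambda_1(\Omega)\bigr)\, u^2,
\end{equation*}
which is pointwise non-positive and strictly negative wherever $u \neq 0$. Since $u$ is a first Dirichlet eigenfunction it is nontrivial, so integrating over $\Omega$ against $\rho$ yields the strict inequality stated in the proposition.

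Finally, to deduce $F$-instability I would invoke Remark \ref{rmk:2ndvarcritcone}, which asserts that the second variation formula Proposition \ref{lem:2ndvarcrit} applies to homogenous variations of a stationary cone. Taking the normal variation on $\Omega$ with generator $u\nu$ (extended by zero) and the centre variation $x_s$ with $x_0' = y$, and noting that the $t_s$--variation contributes only through $a^2 H^2$ and $2uaH$ terms which vanish identically since $H = 0$ on the cone, the second variation at $(0,1)$ equals precisely the negative quantity computed above. This produces an $F$--unstable variation, completing the proof. The only subtlety worth flagging is the appeal to Remark \ref{rmk:2ndvarcritcone} to ensure that a homogenous (rather than strictly compactly supported) extension is permissible when working on a cone; beyond this, no delicate analysis near $\sing V$ is required.
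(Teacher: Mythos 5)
Your proof is correct and follows essentially the same route as the paper: choose $\Omega \ssubset \Sigma$ with $\lambda_1(\Omega)<-\tfrac12$ by domain monotonicity, absorb (equivalently, complete the square on) the cross term $u\langle y,\nu\rangle$, and observe that the $H$-terms vanish identically, leaving the strictly negative quantity $(\tfrac12+\lambda_1(\Omega))\int_\Omega u^2\rho$. One small clarification: your appeal to Remark \ref{rmk:2ndvarcritcone} is not actually needed here and reflects a minor confusion. Since you take $u$ to be a Dirichlet eigenfunction on a domain $\Omega$ compactly contained in $\Sigma$ and extend $u\nu$ by zero, the resulting variation is already compactly supported away from $\sing V$, so Proposition \ref{lem:2ndvarcrit} applies directly (as the paper does). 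Remark \ref{rmk:2ndvarcritcone} addresses the genuinely different situation of \emph{homogenous} (dilation-invariant, hence non-compactly-supported) variations, which arises in the proof of Theorem \ref{thm:Fstablehomog} but not in this proposition.
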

\begin{proof}
Since $\lambda_1(\Sigma)<-\frac{1}{2}$ we may choose a domain $\Omega \ssubset \Sigma$ so that $\lambda_1(\Omega)<-\frac{1}{2}$. Completing the square, the left hand side is bounded above by
$\left(\frac{1}{2}+\lambda_1(\Omega)\right) \int_\Omega  u^2 \rho <0,$ which implies that $\Sigma$ is $F$-unstable by the second variation formula Proposition \ref{lem:2ndvarcrit}, since $H=0$ on $\Sigma$. \end{proof}

\section{Mean convex singular self-shrinkers}
\label{sec:meanconvex}

Throughout this section $\Sigma$ denotes the regular part of an orientable $F$-stationary $n$-varifold $V$ in $\mathbf{R}^{n+1}$ with Euclidean volume growth. The goal is to extend the classification of mean convex self-shrinkers due to Huisken \cite{huisken90} and Colding-Minicozzi \cite{CMgeneric} to the singular setting. 

By Lemma \ref{lem:specbotlower}, if $H>0$ on $\Sigma$, then $\lambda_1(\Sigma) \geq -1$, so again some of the hypotheses on the singular set in this section may be weakened using the regularity theory Proposition \ref{prop:regularity}. We continue to state the results with the stronger hypotheses to clarify the dependence on the size of the singular set. 
We will need the following Simons-type inequality for self-shrinkers:

\begin{lemma}[\cite{CMgeneric}, Lemma 10.8]
On any smooth orientable self-shrinker we have $LA=A$. Hence, if $|A|$ does not vanish at a point then at that point one has \begin{equation}\label{eq:simons}L|A| = |A| + \frac{|\nabla A|^2-|\nabla|A||^2}{|A|}\geq |A|.\end{equation}
\end{lemma}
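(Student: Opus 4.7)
The plan is to first establish the tensorial identity $LA = A$, and then extract the scalar inequality for $|A|$ by a standard Bochner/Kato argument.

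For the first part, I would start with the Simons identity for a two-sided hypersurface $\Sigma^n \subset \mathbf{R}^{n+1}$,
\begin{equation*}
\Lap A_{ij} = \nabla_i \nabla_j H - |A|^2 A_{ij} + H\, A_{ik} A^k{}_j,
\end{equation*}
which holds in any codimension-one setting (no self-shrinker assumption needed yet). I would then specialize using the self-shrinker equation $H = \tfrac{1}{2}\langle x,\nu\rangle$: differentiating once gives $\nabla_i H = -\tfrac{1}{2} A_{ik}\langle x, e_k\rangle$, and differentiating again (using $\nabla_j\langle x,e_k\rangle = \delta_{jk} - H A_{jk}$, via $\langle x,\nu\rangle = 2H$, together with Codazzi $\nabla_j A_{ik} = \nabla_k A_{ij}$) gives an expression for $\nabla_i\nabla_j H$ in terms of $A$, $\nabla A$, and contractions with $x^T$. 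Plugging this into Simons and recognizing the drift term $-\tfrac{1}{2}\langle x, \nabla A_{ij}\rangle$, the formula collapses to $\mathcal{L} A_{ij} = \tfrac{1}{2} A_{ij} - |A|^2 A_{ij}$, i.e.\ $LA = A$. This is the part requiring the most bookkeeping, but it is a direct computation once one is careful with the sign convention $H = \tfrac{1}{2}\langle x,\nu\rangle$.

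For the second part, at any point where $|A|>0$ the function $|A|$ is smooth and we may compute. Differentiating $|A|^2 = A_{ij}A^{ij}$ yields $|A|\nabla_k|A| = A^{ij}\nabla_k A_{ij}$, and a second differentiation together with the Bochner-type identity $\tfrac{1}{2}\Lap|A|^2 = |\nabla A|^2 + A^{ij}\Lap A_{ij}$ gives
\begin{equation*}
\Lap|A| = \frac{|\nabla A|^2 - |\nabla|A||^2}{|A|} + \frac{A^{ij}\Lap A_{ij}}{|A|}.
\end{equation*}
From $LA = A$ we read off $\Lap A_{ij} = \tfrac{1}{2}\langle x,\nabla A_{ij}\rangle + \tfrac{1}{2}A_{ij} - |A|^2 A_{ij}$, so contracting with $A^{ij}/|A|$ and using $A^{ij}\nabla_k A_{ij} = |A|\nabla_k|A|$ produces $A^{ij}\Lap A_{ij}/|A| = \tfrac{1}{2}\langle x, \nabla|A|\rangle + \tfrac{1}{2}|A| - |A|^3$. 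Substituting back and adding $|A|^3 + \tfrac{1}{2}|A|$ to form $L|A| = \mathcal{L}|A| + |A|^3 + \tfrac{1}{2}|A|$, the drift term cancels and we obtain the asserted identity
\begin{equation*}
L|A| = |A| + \frac{|\nabla A|^2 - |\nabla|A||^2}{|A|}.
\end{equation*}

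The inequality $L|A| \geq |A|$ then follows immediately from the refined Kato inequality $|\nabla A|^2 \geq |\nabla|A||^2$, which holds pointwise wherever $|A|>0$ (and here requires no improvement beyond the trivial Kato bound, since $|A|\nabla|A|$ is simply a contraction of $\nabla A$ with the unit tensor $A/|A|$ and Cauchy--Schwarz applies directly). The principal obstacle is really just the algebraic bookkeeping in step one; step two is a mechanical Bochner computation and step three is Cauchy--Schwarz.
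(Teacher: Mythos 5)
Your proposal is correct and is essentially the argument Colding--Minicozzi give for their Lemma~10.8, which this paper simply cites rather than reproves: one differentiates the self-shrinker equation $H=\tfrac{1}{2}\langle x,\nu\rangle$ twice, uses Codazzi, and inserts the result into the Simons identity to obtain $LA=A$; the scalar identity for $L|A|$ then follows by the standard Bochner/Kato computation you describe (your Step~2 arithmetic checks out, with the $|A|^3$ and drift terms cancelling as claimed). One tiny phrasing quibble: you call the final inequality a ``refined'' Kato inequality, but as you yourself note it is just the trivial Kato bound $|\nabla|A||\le|\nabla A|$ from Cauchy--Schwarz applied to $|A|\nabla_k|A|=A^{ij}\nabla_k A_{ij}$; no refinement is invoked or needed.
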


We now adapt the Schoen-Simon-Yau \cite{SSY} argument to improve our control on $|A|$. 

\begin{lemma}
Suppose that $\mathcal{H}^{n-4}(\sing V)=0$. If $H>0$ on $\Sigma=\reg V$ then $|A|$ is weighted $L^4$ and $|\nabla |A||$, $|\nabla A|$ are weighted $L^2$. 
\end{lemma}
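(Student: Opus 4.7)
The approach is to run the Schoen-Simon-Yau machinery on $\reg V$, with Lipschitz cutoffs $\eta$ compactly supported away from $\sing V$, and then pass to the limit using the structural cutoffs from Section \ref{sec:cutoff}. First observe that $LH = H$ together with $H > 0$ implies $\lambda_1(\Sigma) \geq -1$ by Lemma \ref{lem:specbotlower}, so the stability inequality holds with slack $1/2$. Corollary \ref{lem:AL2} applied with $u = H$ already gives $|A|\in L^2_\rho$, and Lemma \ref{lem:AL2grad} (again with $u=H$ and $q=4$) supplies the quantitative decay
\begin{equation*}
\int_\Sigma |A|^2 \phi_R^2 |\nabla \phi_R|^2 \rho \longrightarrow 0 \quad (R \to \infty).
\end{equation*}

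For any Lipschitz $\eta$ with compact support in $\reg V$, I multiply the Simons identity $|A|L|A| = |A|^2 + |\nabla A|^2 - |\nabla |A||^2$ by $\eta^2 \rho$ and integrate by parts via Lemma \ref{lem:compactibp} to obtain
\begin{equation*}
\int |A|^4 \eta^2 \rho = \tfrac{1}{2} \int |A|^2 \eta^2 \rho + \int |\nabla A|^2 \eta^2 \rho + 2 \int |A| \eta \langle \nabla |A|, \nabla \eta \rangle \rho,
\end{equation*}
while testing the stability inequality with $f = |A|\eta$ yields
\begin{equation*}
\int |A|^4 \eta^2 \rho \leq \int |\nabla |A||^2 \eta^2 \rho + 2 \int |A| \eta \langle \nabla |A|, \nabla \eta \rangle \rho + \int |A|^2 |\nabla \eta|^2 \rho + \tfrac{1}{2} \int |A|^2 \eta^2 \rho.
\end{equation*}
Subtracting the first from the second and invoking the refined Kato inequality $|\nabla A|^2 \geq (1+c_n)|\nabla |A||^2$ for some dimensional $c_n>0$ gives
\begin{equation*}
c_n \int |\nabla |A||^2 \eta^2 \rho \leq \int |A|^2 |\nabla \eta|^2 \rho,
\end{equation*}
and substituting this bound back into the stability estimate with an absorbing Cauchy-Schwarz on the cross term produces
\begin{equation*}
\int |A|^4 \eta^2 \rho \leq C_n \int |A|^2 |\nabla \eta|^2 \rho + \tfrac{1}{2} \int |A|^2 \eta^2 \rho.
\end{equation*}

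Now take $\eta = \phi_R^2$, so that $|\nabla \eta|^2 = 4 \phi_R^2 |\nabla \phi_R|^2$. By dominated convergence $\int |A|^2 \phi_R^4 \rho \to \int |A|^2 \rho < \infty$, while the gradient term vanishes in the limit by Lemma \ref{lem:AL2grad}. Fatou's lemma then delivers $|A| \in L^4_\rho$; the conclusion $|\nabla |A|| \in L^2_\rho$ follows from the Kato bound (with the same $\eta$), and $|\nabla A| \in L^2_\rho$ follows by rearranging the Simons identity equation and passing to the same limit.

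The main obstacle is the interaction between the classical Schoen-Simon-Yau manipulations and the singular set: the integration by parts requires $\eta$ compactly supported in $\reg V$, so the subtle point is absorbing the boundary cost contributed by $|\nabla \phi_R|$ when $R\to\infty$. This is precisely what Lemma \ref{lem:AL2grad} accomplishes---it converts the hypothesis $\mathcal{H}^{n-4}(\sing V) = 0$ into the quantitative gradient decay needed to close the argument, with the codimension-$4$ assumption entering because the Schoen-Simon-Yau combination forces a factor of $|\nabla \phi_R|^2$ (rather than merely $|\nabla \phi_R|$) in the error.
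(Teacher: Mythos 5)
Your overall strategy is the same Schoen--Simon--Yau bootstrap the paper uses, and you correctly identify the role of the structural cutoffs and the codimension-$4$ hypothesis. The manipulation of the Simons identity and the stability inequality with $f=|A|\eta$ is sound. However, there is a genuine gap in the Kato step.

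You invoke a ``refined Kato inequality $|\nabla A|^2 \geq (1+c_n)|\nabla|A||^2$.'' This inequality does \emph{not} hold on self-shrinkers, because its proof for hypersurfaces requires the trace of $\nabla A$, namely $\nabla H$, to vanish. On a self-shrinker $\nabla H = -\frac{1}{2}A(x^T,\cdot)$ is generically nonzero. The correct statement (Colding--Minicozzi's Lemma 10.2, which the paper cites) is
\begin{equation*}
\left(1+\tfrac{2}{n+1}\right)|\nabla|A||^2 \leq |\nabla A|^2 + \tfrac{2n}{n+1}|\nabla H|^2.
\end{equation*}
Consequently, subtracting your two inequalities and applying Kato only yields
\begin{equation*}
\tfrac{2}{n+1}\int_\Sigma|\nabla|A||^2\eta^2\rho \leq \int_\Sigma |A|^2|\nabla\eta|^2\rho + \tfrac{2n}{n+1}\int_\Sigma|\nabla H|^2\eta^2\rho,
\end{equation*}
and the extra $|\nabla H|^2$ term propagates into the final $L^4$ estimate. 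The gap is fixable: since $|\nabla H|\leq|A||x|$ and Corollary \ref{lem:AL2} with $u=H$ gives $|A||x|\in L^2_\rho$, the term $\int_\Sigma|\nabla H|^2\eta^2\rho$ converges as $R\to\infty$ and causes no trouble. But as written your estimate $c_n\int|\nabla|A||^2\eta^2\rho\leq\int|A|^2|\nabla\eta|^2\rho$ is false in general, and the subsequent ``substituting this bound back'' step inherits the error. Once the $|\nabla H|^2$ correction is carried through, your proof matches the paper's; the paper organizes the absorption slightly differently (it substitutes the Kato estimate into the stability estimate with a free parameter $a<\tfrac{1}{n+1}$ and absorbs $\int|A|^4\eta^2\rho$ at the end), but both presentations are equivalent once the Kato inequality is stated correctly.
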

\begin{proof}
First, for $\eta$ with compact support in $\Sigma$, integrating $|A|^2\eta^2 \log H$ by parts as in Lemma \ref{lem:specbotlower} and using the absorbing inequality (twice) gives
\begin{equation}
\label{eq:ssy1}
\int_\Sigma |A|^4\eta^2\rho \leq (1+a) \int_\Sigma |\nabla |A||^2\eta^2\rho +\int_\Sigma |A|^2\left((1+a^{-1})|\nabla \eta|^2 + \frac{1}{2}\eta^2\right) \rho,
\end{equation}
where $a$ is an arbitrary positive number to be chosen later. 

Second, it follows from the Simons-type inequality (\ref{eq:simons}) and Colding-Minicozzi's Kato inequality \cite[Lemma 10.2]{CMgeneric} that
\begin{equation}
\label{eq:ssy2}
\int_\Sigma |A|^4\eta^2\rho + \int_\Sigma \left(\frac{2n}{n+1}|\nabla H|^2\eta^2 +a^{-1}|A|^2|\nabla \eta|^2\right)\rho \geq \left(1+\frac{2}{n+1}-a\right)\int_\Sigma |\nabla|A||^2\eta^2\rho. 
\end{equation}

Combining (\ref{eq:ssy1}) and (\ref{eq:ssy2}) then gives
\begin{equation}
\int_\Sigma |A|^4 \eta^2\rho \leq \frac{1+a}{1+\frac{2}{n+1}-a} \int_\Sigma |A|^4\eta^2 \rho + C_{n,a} \int_\Sigma (|\nabla H|^2\eta^2+|A|^2\eta^2+|A|^2|\nabla\eta|^2)\rho.
\end{equation}

Choosing $a<\frac{1}{n+1}$ will give that the first coefficient on the right is less than 1 and thus may be absorbed on the left, therefore
\begin{equation}
\label{eq:ssy3}
\int_\Sigma |A|^4 \eta^2\rho \leq  C\int_\Sigma (|\nabla H|^2\eta^2+|A|^2\eta^2+|A|^2|\nabla\eta|^2)\rho,
\end{equation}
where $C=C(n)$. 

Let $\phi_R=\phi_{R,\epsilon}$ be as in Section \ref{sec:cutoff}. We will apply (\ref{eq:ssy3}) with $\eta = \phi_R^2$. 

As in Corollary \ref{cor:Hcandidate}, using Corollary \ref{lem:AL2} with the positive eigenfunction $H$ shows that $|A|$ and $|\nabla H|$ are weighted $L^2$. Therefore as $R\rightarrow \infty$, the first and second terms on the right will converge to the finite integrals $\int_\Sigma |\nabla H|^2\rho$ and $\int_\Sigma |A|^2\rho$ respectively. To bound the last term in (\ref{eq:ssy3}) we use Lemma \ref{lem:AL2grad} with the globally defined eigenfunction $H$, which gives
\begin{equation}
\label{eq:ssy4}
\int_\Sigma |A|^2 |\nabla \eta|^2\rho = 4\int_\Sigma |A|^2 \phi_R^2 |\nabla \phi_R|^2\rho \leq 1056 C_V ( R^n\e^{-\frac{(R-4)^2}{4}} + 4^{n}\epsilon). 
\end{equation}

Choosing $\epsilon = R^{-1}$ and taking $R\rightarrow \infty$ we see that this term tends to 0, thus we have shown that indeed $|A|$ is weighted $L^4$ by Fatou's lemma. With this fact in hand, it follows from (\ref{eq:ssy2}) that $|\nabla |A||$ is weighted $L^2$.

Finally, multiplying the identity $\mathcal{L}|A|^2=2|\nabla A|^2+|A|^2-2|A|^4$ by $\frac{1}{2}\eta^2$ and integrating by parts, we have
\begin{equation}
\int_\Sigma \eta^2(|\nabla A|^2-|A|^4)\rho \leq -\int_\Sigma 2\eta|A|\langle \nabla \eta,\nabla |A|\rangle \rho \leq \int_\Sigma (\eta^2|\nabla|A||^2 + |A|^2|\nabla \eta|^2)\rho. 
\end{equation}
Since we now know that $|\nabla |A||$ is weighted $L^2$ and that $|A|$ is weighted $L^4$, we again set $\eta=\phi_R^2$ and use (\ref{eq:ssy4}) to handle the last term; this shows that $|\nabla A|^2$ is weighted $L^2$, as desired.
\end{proof}

\begin{lemma}
\label{lem:tauconst}
Suppose that $\mathcal{H}^{n-4}(\sing V)=0$. 
If $H>0$ on $\Sigma=\reg V$, then $|A|/H$ is constant and hence $|\nabla A|^2 = |\nabla |A||^2$ on $\Sigma$. 
\end{lemma}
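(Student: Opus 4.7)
The plan is to derive a divergence-form differential inequality satisfied by $w := |A|/H$, and then run a Caccioppoli-type argument using the cutoff $\phi_R$ of Section~\ref{sec:cutoff} (with $q=4$) together with Lemma~\ref{lem:AL2grad} to force $\nabla w \equiv 0$. First, since $H > 0$ is an $L$-eigenfunction of eigenvalue $-1$ on $\Sigma$, Lemma~\ref{lem:specbotlower} gives $\lambda_1(\Sigma) \geq -1 > -\infty$, so Proposition~\ref{prop:regularity} makes $\Sigma$ analytic, and Lemma~\ref{lem:connectedness} (applicable since $\mathcal{H}^{n-4}(\sing V) = 0$) makes $\Sigma$ connected. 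Analyticity combined with $H > 0$ (which precludes $\Sigma$ from being part of a hyperplane) then allows one to conclude that $|A| > 0$ on all of $\Sigma$, so that $w$ is smooth and positive.

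The key algebraic identity comes from combining $LH = H$ with the Simons-type equation $L|A| = |A| + (|\nabla A|^2 - |\nabla|A||^2)/|A|$ in~\eqref{eq:simons}. The zeroth-order pieces of $L$ cancel in $HL|A| - |A|LH$, and the product rule gives $\mathcal{L}(wH) = H\mathcal{L}w + w\mathcal{L}H + 2\langle\nabla H,\nabla w\rangle$, so the resulting identity can be rewritten in divergence form as
\begin{equation*}
\div\bigl(H^2 \rho\, \nabla w\bigr) \;=\; \frac{H\bigl(|\nabla A|^2 - |\nabla |A||^2\bigr)}{|A|}\,\rho \;\geq\; 0,
\end{equation*}
the nonnegativity being Kato's inequality.

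Next I would test this against $\phi_R^4 w \geq 0$. Since $\phi_R$ is Lipschitz with compact support in $\Sigma\setminus \sing V$, integration by parts is legitimate and boundary terms vanish; a standard AM--GM absorption then yields
\begin{equation*}
\int \phi_R^4\, H^2 |\nabla w|^2\,\rho \;\leq\; 16 \int \phi_R^2\, w^2 H^2 |\nabla\phi_R|^2\,\rho \;=\; 16\int \phi_R^2\, |A|^2\, |\nabla\phi_R|^2\, \rho.
\end{equation*}
Lemma~\ref{lem:AL2grad} applied with $u = H$ (so $\lambda = -1$), together with the choice $\epsilon(R) \to 0$ appropriately (e.g.\ $\epsilon = R^{-1}$), then shows the right-hand side tends to $0$ as $R \to \infty$. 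Fatou forces $\nabla w \equiv 0$, so $w$ is constant on the connected $\Sigma$; with $\nabla w \equiv 0$ the right-hand side of the divergence identity vanishes and yields $|\nabla A|^2 = |\nabla |A||^2$ pointwise.

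The main technical obstacle is coordinating the weight in the test function with the cutoff estimate: the choice $\phi_R^4$ rather than $\phi_R^2$ is essential so that the residual term is precisely the quantity $\int |A|^2 \phi_R^2 |\nabla\phi_R|^2 \rho$ controlled by Lemma~\ref{lem:AL2grad} (and ultimately by the $\psi_R$-type annular cutoff of Section~\ref{sec:cutoff}); with $\phi_R^2$ as the weight one would be left with $\int |A|^2 |\nabla\phi_R|^2 \rho$, which is not immediately small under only the codimension-$4$ smallness of $\sing V$.
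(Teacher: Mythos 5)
Your proof is correct and takes a genuinely different route from the paper's. The paper first establishes (in the unnumbered lemma preceding Lemma~\ref{lem:tauconst}) that $|A|$ is weighted $L^4$ and $|\nabla|A||,|\nabla A|$ are weighted $L^2$ via the Schoen--Simon--Yau technique, and then performs two global integrations by parts on $|A|^2\mathcal{L}\log H$ and $|A|\mathcal{L}|A|$ (justified by those integrability bounds through Lemma~\ref{lem:ibp}); subtracting the resulting identities yields $\int_\Sigma\bigl||A|\nabla\log H-\nabla|A|\bigr|^2\rho\le0$, which since $|A|\nabla\log H-\nabla|A|=-H\nabla(|A|/H)$ is literally the same conclusion $\int H^2|\nabla w|^2\rho\le0$ you reach. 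Your divergence-form repackaging
\begin{equation*}
\div_\Sigma\bigl(H^2\rho\,\nabla w\bigr)=H\rho\,\frac{|\nabla A|^2-|\nabla|A||^2}{|A|}\ge0
\end{equation*}
(which is correct: $\div_\Sigma(H^2\rho\nabla w)=H\rho(H\mathcal{L}w+2\langle\nabla H,\nabla w\rangle)$, and the zeroth-order parts of $LH=H$ and the Simons identity cancel against each other after the product rule) turns this into a Caccioppoli argument on compactly supported cutoffs, where the residual boundary term is exactly $\int|A|^2\phi_R^2|\nabla\phi_R|^2\rho$, which Lemma~\ref{lem:AL2grad} with $u=H$, $\lambda=-1$ sends to zero quantitatively. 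What this buys you is a complete bypass of the SSY $L^4$ estimate for $|A|$ (and hence of the preceding unnamed lemma): you only need $H>0$ to invoke Lemma~\ref{lem:AL2grad}, so your proof is self-contained modulo Section~6 and Lemma~\ref{lem:specbotlower}. The degree-$4$ cutoff observation is the key technical point, as you note, and it is exactly what makes the boundary term land on the controlled quantity $\phi_R^2|\nabla\phi_R|^2$ rather than the uncontrolled $|\nabla\phi_R|^2$. One small simplification: you do not need analyticity to conclude $|A|>0$ --- since $H=\tr A$, Cauchy--Schwarz gives $|A|^2\ge H^2/n>0$ directly, so the appeal to Proposition~\ref{prop:regularity} is unnecessary here (you do still want Lemma~\ref{lem:connectedness} for connectedness of $\Sigma$, which is fine since $\mathcal{H}^{n-4}(\sing V)=0$ implies $\mathcal{H}^{n-1}(\sing V)=0$).
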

\begin{proof}
By Lemma \ref{lem:connectedness} we may assume that $\Sigma$ is connected.

We wish to integrate $|A|^2\mathcal{L}\log H$ and $|A|\mathcal{L}|A|$ by parts. So we check:

First, since $|A|$ is weighted $W^{1,2}$ and $L^4$ by the above lemma, using Lemma \ref{lem:eigestimate} with $H>0$ gives that $|A||\nabla \log H|$ is weighted $L^2$. Using Young's inequality we then have \begin{equation}(|A|^2|\nabla \log H|)^p = |A|^p (|A|\,|\nabla \log H|)^p \leq \frac{2-p}{2} |A|^{\frac{2p}{2-p}}+ \frac{p}{2}|A|^2|\nabla \log H|^2 .\end{equation} Since $|A|$ was weighted $L^4$ this shows that $|A|^2|\nabla\log H|$ is weighted $L^p$ for $p=\frac{4}{3} $. Since $\mathcal{L}\log H= \frac{1}{2}-|A|^2 - |\nabla \log H|^2$, we see that $|A|^2|\mathcal{L}\log H|$ is weighted $L^1$. Also \begin{equation}|\nabla |A|^2|\, |\nabla \log H| = 2|A|\,|\nabla |A|| \, |\nabla \log H| \leq |A|^2|\nabla \log H|^2 + |\nabla |A||^2\end{equation} is weighted $L^1$ since $|\nabla |A||$ was weighted $L^2$. By Lemma \ref{lem:ibp} we may now integrate $|A|^2\mathcal{L}\log H$ by parts to find that 
\begin{equation}
\label{eq:tauconst1}
\int_\Sigma \langle \nabla |A|^2,\nabla \log H\rangle \rho =  \int_\Sigma |A|^2 (|A|^2-\frac{1}{2}+|\nabla \log H|^2)\rho.
\end{equation}

Now using the Simon's equality we have that \begin{equation}|A|\mathcal{L}|A| = \frac{1}{2}|A|^2-|A|^4+|\nabla A|^2-|\nabla |A||^2\end{equation} is weighted $L^1$. We already know that $|\nabla |A||$ is weighted $L^2$, and as above we have that \begin{equation}(|A|\,|\nabla |A||)^p\leq \frac{2-p}{2} |A|^{\frac{2p}{2-p}} + \frac{p}{2}|\nabla |A||^2 .\end{equation} Again since $|A|$ is weighted $L^4$ this gives that $|A|\,|\nabla |A||$ is weighted $L^p$ for $p=\frac{4}{3}$, so we may use Lemma \ref{lem:ibp} to get that
\begin{equation}
\label{eq:tauconst2}
\int_\Sigma |\nabla |A||^2 \rho =-\int_\Sigma |A|\mathcal{L}|A|\rho \leq \int_\Sigma (|A|^4-\frac{1}{2}|A|^2)\rho.
\end{equation}

Subtracting (\ref{eq:tauconst1}) from (\ref{eq:tauconst2}) and rearranging we get \begin{equation}0\geq \int_\Sigma ||A|\nabla \log H - \nabla |A||^2\rho,\end{equation}
which implies that $|A|\nabla \log H = \nabla |A|$ and hence $|A|/H$ is constant on $ \Sigma$.

The final statement follows again from the Simons inequality (\ref{eq:simons}) since equality now must hold in the previous inequalities.
\end{proof}

We are now ready to present the proof of Theorem \ref{thm:meanconvexintro}.

\begin{theorem}
\label{thm:meanconvex}
Let $V$ be an orientable $F$-stationary $n$-varifold in $\mathbf{R}^{n+1}$ with finite entropy, and suppose that $\mathcal{H}^{n-1}(\sing V)=0$. If $H\geq 0$ on $\reg V$ then either $V$ is a stationary cone, or $\supp V$ is a generalised cylinder $\mathbf{S}^k(\sqrt{2k})\times\mathbf{R}^{n-k}$. 
\end{theorem}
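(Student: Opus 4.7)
The plan is to dichotomise based on whether the mean curvature $H$ is identically zero on $\reg V$: one branch yields a stationary cone, the other a generalised cylinder.

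In the first case, $H \equiv 0$ on $\reg V$, $F$-stationarity of $V$ gives $\vec{H} = -\tfrac{1}{2}x^\perp = 0$ on $\reg V$, so $x^\perp = 0$ holds $\mathcal{H}^n$-almost everywhere on $\reg V$. The assumption $\mathcal{H}^{n-1}(\sing V) = 0$ gives $\mathcal{H}^n(\sing V) = 0$, and by the rectifiability of $V$ we have $\mu_V(\sing V) = 0$; consequently $x^\perp = 0$ holds $\mu_V$-almost everywhere, and Lemma~\ref{lem:simon}(1) identifies $V$ as a stationary cone.

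In the remaining case $H \not\equiv 0$ on $\reg V$, I would first upgrade this to $H > 0$ on all of $\reg V$. Lemma~\ref{lem:connectedness}, together with the standing convention that $\supp V$ is connected, gives that $\reg V$ is connected. Rewriting $LH = H$ as the homogeneous linear equation $\mathcal{L}H + (|A|^2 - \tfrac{1}{2})H = 0$, the Hopf strong maximum principle for the locally uniformly elliptic drift operator $\mathcal{L}$ (whose zeroth-order coefficient is locally bounded on the smooth set $\reg V$) forbids the non-trivial nonnegative solution $H$ from attaining an interior zero, so $H > 0$ on $\reg V$. Then Lemma~\ref{lem:specbotlower} yields $\lambda_1(\reg V) \geq -1 > -\infty$, and Proposition~\ref{prop:regularity} upgrades the singular set to have Hausdorff dimension at most $n-7$; in particular $\mathcal{H}^{n-4}(\sing V) = 0$, so Lemma~\ref{lem:tauconst} applies and gives that $|A|/H$ is constant on $\reg V$ with the Kato equality $|\nabla A|^2 = |\nabla |A||^2$.

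From here the classical pointwise argument of Colding-Minicozzi takes over: the rigidity identities $|A|/H$ constant and $|\nabla A| = |\nabla |A||$, together with the Codazzi equations on the smooth hypersurface $\reg V$, force the second fundamental form at every point to have the eigenvalue structure of a generalised cylinder $\mathbf{S}^k(\sqrt{2k}) \times \mathbf{R}^{n-k}$; since the locally constant integer $k$ is globally constant on the connected set $\reg V$, one concludes $\reg V$ is an open subset of such a cylinder, and because $\sing V$ has codimension at least $7$ in $\supp V = \overline{\reg V}$, the support is the full generalised cylinder. The main obstacle was getting $|A|/H$ constant, which requires enough $L^4$ control on $|A|$ to integrate by parts on a domain with potentially non-trivial singular set; this was already resolved before the statement of the theorem by the Schoen-Simon-Yau style bootstrap culminating in Lemma~\ref{lem:tauconst}. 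The remaining delicate point is passing from the weak hypothesis $\mathcal{H}^{n-1}(\sing V) = 0$ to the stronger $\mathcal{H}^{n-4}(\sing V) = 0$ needed there, which is precisely the regularity upgrade supplied by Proposition~\ref{prop:regularity} once $H > 0$ is in hand.
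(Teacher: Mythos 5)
Your proof follows the same overall architecture as the paper's: split on $H\equiv 0$ versus $H>0$, dispatch the first case via Lemma~\ref{lem:simon}(1), and in the second case use Lemma~\ref{lem:specbotlower} plus Proposition~\ref{prop:regularity} to gain codimension-$7$ control on $\sing V$ so that Lemma~\ref{lem:tauconst} applies. Your measure-theoretic justification in the $H\equiv 0$ case (passing from $\mathcal{H}^{n-1}(\sing V)=0$ to $\mathcal{H}^n(\sing V)=0$ to $\mu_V(\sing V)=0$) is correct, and your use of Hopf's strong maximum principle to upgrade $H\geq 0$, $H\not\equiv 0$ to $H>0$ serves the same purpose as the paper's appeal to Harnack.

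The one spot where you compress more than is warranted is the final pointwise classification. You assert that $|A|/H$ constant and $|\nabla A|=|\nabla|A||$, together with Codazzi, \emph{directly} force the second fundamental form at every point of $\reg V$ to have the eigenvalue pattern of a generalised cylinder. That is not what the Colding--Minicozzi argument actually gives. What their proof of Theorem~0.17 delivers is a dichotomy: either $\nabla A\equiv 0$ on $\reg V$, or there exist constant vectors $e_2,\ldots,e_n$ that are tangent to $\reg V$ at every point. In the first branch one must invoke Lawson's classification of hypersurfaces with parallel shape operator — and it matters that Lawson's theorem does not assume completeness, since $\reg V$ may be incomplete — and then use the constancy theorem to upgrade the piece of cylinder to all of $\supp V$. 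In the second branch one uses Lemma~\ref{lem:simon}(2) to split $\mu_V=\mu_{\mathbf{R}^{n-1}}\times\mu_{\wt V}$ with $\wt V$ a one-dimensional $F$-stationary varifold; here the codimension-$7$ bound on $\sing V$ forces $\wt V$ to be smooth, and only then does the classical smooth classification finish the job. So the codimension bound is doing work in \emph{both} branches of the final step, not just in unlocking Lemma~\ref{lem:tauconst}. Your write-up skips this branching and the role Lawson's theorem and the line-splitting play; that is a gap in the exposition, though not in the strategy.
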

\begin{proof}
Since $\mathcal{H}^{n-1}(\sing V)=0$, we may assume by Lemma \ref{lem:connectedness} that $\Sigma=\reg V$ is connected. Then since $LH=H$, by the Harnack inequality we must either have $H>0$ or $H\equiv 0$ on $\Sigma$. 
If $H\equiv 0$ on $\Sigma$ then in particular $x^\perp=0$ almost everywhere on $V$, so $V$ must be a stationary cone by Lemma \ref{lem:simon}.

Otherwise, we have $H>0$ on $\Sigma$. By Lemma \ref{lem:specbotlower}, we then have $\lambda_1(\Sigma)\geq -1$ so by the regularity theory Theorem \ref{prop:regularity}, we may assume that $\sing V$ has codimension at least 7. 

Now by Lemma \ref{lem:tauconst}, we have that $|A|/H$ is constant and $|\nabla A|^2=|\nabla |A||^2$ on $\Sigma$. The remainder of the proof of \cite[Theorem 0.17]{CMgeneric} goes through to prove that either $\nabla A\equiv 0$ on $ \Sigma$, or there are constant vectors $e_2,\cdots, e_n \in \mathbf{R}^{n+1}$ that that are tangent at every point of $\Sigma$. 

If $\nabla A\equiv 0$ on $ \Sigma$, then \cite[Theorem 4]{lawson} (which does not assume completeness) implies that $\Sigma$ is a piece of a generalised cylinder $\Sigma_0=\mathbf{S}^k(\sqrt{2k})\times\mathbf{R}^{n-k}$. Then $\supp V$ is contained in $\Sigma_0$, so by the constancy theorem we must have $\supp V = \Sigma_0$. 

On the other hand, if $e_2,\cdots, e_n \in \mathbf{R}^{n+1}$ are constant vectors tangent at every point of $\Sigma$, then by Lemma \ref{lem:simon} we have that $\mu_V = \mu_{\mathbf{R}^{n-1}}\times \mu_{\wt{V}}$, where $\wt{V}$ is an orientable $F$-stationary 1-varifold in $\mathbf{R}^2$. Since the singular set had codimension at least 7, certainly $\wt{V}$ and hence $V$ must in fact correspond to smooth complete embedded hypersurfaces. By the result of \cite[Theorem 0.17]{CMgeneric} or the remainder of its proof, we conclude that in this case $\supp V$ must be a cylinder $\mathbf{S}^1(\sqrt{2}) \times \mathbf{R}^{n-1}$. 
\end{proof}

\section{Classification of stable self-shrinkers}
\label{sec:class}

In this section we classify $F$-stable and entropy-stable singular self-shrinkers. It will be convenient to include a quick lemma verifying that there are no nontrivial stationary cones in low dimensions which satisfy the $\alpha$-structural hypothesis.

\begin{lemma}
\label{lem:lowdimcones}
Let $n\leq 2$ and suppose that $V=C(W)$ be a stationary $n$-cone in $\mathbf{R}^{n+1}$. If $V$ satisfies the $\alpha$-structural hypothesis for some $\alpha \in(0,1)$, then $\supp V$ must be a hyperplane.
\end{lemma}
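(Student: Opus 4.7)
The plan is to pass to the link $W$ in $\mathbf{S}^n$ and use the $\alpha$-structural hypothesis to exclude all configurations other than the totally geodesic one; in these low dimensions the set of possibilities for $W$ is very restricted, so I do not need any deep regularity theory.

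For $n=1$, I would note that the link $W$ is a stationary integral $0$-varifold in $\mathbf{S}^1$, i.e.\ a finite sum $\sum_i m_i\delta_{p_i}$ with multiplicities $m_i\in\mathbf{Z}_{>0}$ and unit vectors $p_i\in\mathbf{S}^1$, and stationarity of $V=C(W)$ at the origin is equivalent to the balancing condition $\sum_i m_ip_i=0$. If there were $k\geq 3$ distinct directions, then $\supp V$ near $0$ would be the union of $k$ rays -- embedded $C^{1,\alpha}$ $1$-hypersurfaces with common $C^{1,\alpha}$ boundary the origin -- which directly violates the $\alpha$-structural hypothesis. Hence $k\leq 2$, and the balancing condition together with $|p_i|=1$ forces $k=2$, $p_2=-p_1$ and $m_1=m_2$, so that $\supp V$ is a line through the origin.

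For $n=2$, the link $W$ is a stationary integral $1$-varifold in $\mathbf{S}^2$. Using the basic structure of such varifolds -- one-dimensional minimal submanifolds of $\mathbf{S}^2$ are geodesic arcs of great circles, and the constancy theorem gives locally constant integer multiplicity on each arc -- every point of $\supp W$ is either a regular point on a smooth great circle arc; a $2$-junction, which by balancing ($m_1T_1+m_2T_2=0$ with $|T_i|=1$) forces $T_2=-T_1$ and $m_1=m_2$ and therefore continues smoothly as a single geodesic; or a genuine junction where $k\geq 3$ geodesic arcs meet with balanced weighted tangents. If $p_0\in\mathbf{S}^2$ were such a $k\geq 3$ junction, then at every point $q\in\mathbf{R}_{>0}p_0\subset\sing V$, a neighborhood of $q$ in $\supp V$ would coincide with the union of $k\geq 3$ half-planes meeting only along their common $C^\infty$ boundary ray $\mathbf{R}_{>0}p_0$, again contradicting the $\alpha$-structural hypothesis. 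Hence $\supp W$ has no genuine junctions and is a disjoint union of smooth closed geodesics in $\mathbf{S}^2$, i.e.\ of great circles. Since any two distinct great circles in $\mathbf{S}^2$ meet at antipodal points and would thereby produce a $k=4$ junction, $\supp W$ must in fact be a single great circle, so $\supp V$ is the plane it spans -- a hyperplane in $\mathbf{R}^3$.

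The key technical observation, and the only real content of the argument, is that a $k\geq 3$ junction in the link $W$ lifts at every non-origin point of the resulting singular ray of $V$ to precisely the configuration forbidden by the $\alpha$-structural hypothesis; once this reduction is in place, the very rigid classification of stationary integral varifolds of dimension at most $1$ in $\mathbf{S}^1$ and $\mathbf{S}^2$ finishes both cases. No smallness assumption on $\sing V$ is needed beyond the $\alpha$-structural hypothesis itself.
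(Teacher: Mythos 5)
Your proof is correct and takes essentially the same route as the paper: for $n=1$ the $\alpha$-structural hypothesis forbids three or more rays and the balancing condition then forces a line, and for $n=2$ one transfers the junction restriction to the link in $\mathbf{S}^2$ to conclude it must be a single great circle. You are somewhat more explicit than the paper, spelling out the Allard--Almgren geodesic-net structure of the stationary $1$-varifold link and noting that two distinct great circles always intersect (ruling out the multi-circle case the paper leaves implicit), but the underlying strategy and the key observation -- that a $k\geq 3$ junction in the link produces exactly the configuration forbidden by the $\alpha$-structural hypothesis along a singular ray of the cone -- are the same as the paper's.
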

\begin{proof}
If $n=1$, then the $\alpha$-structural hypothesis implies that any tangent cone to $\supp V$ consists of at most two rays, for which the only stationary configuration is a straight line. This shows that $V$ is an integer multiple of a smooth cone, hence of a line. 

If $n=2$, by dilation invariance the link must also satisfy the $\alpha$-structural hypothesis. The above argument then shows that the link $W$ is smooth. But the only smooth closed geodesics in $\mathbf{S}^2$ are the great circles, so $V$ must be a multiple of a plane. 
\end{proof}

\subsection{$F$-stable self-shrinkers}

First we classify $F$-stable self-shrinkers.

\begin{theorem}
\label{thm:Fstable}
Let $V$ be an orientable $F$-stationary $n$-varifold in $\mathbf{R}^{n+1}$ with finite entropy, that satisfies the $\alpha$-structural hypothesis for some $\alpha\in(0,\frac{1}{2})$. If $V$ is $F$-stable then $\supp V$ must be a hyperplane $\mathbf{R}^n$ or a shrinking sphere $\mathbf{S}^n(\sqrt{2n})$. 
\end{theorem}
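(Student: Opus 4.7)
The plan is to combine the machinery developed in Sections \ref{sec:stability} and \ref{sec:meanconvex} and finish by ruling out the non-spherical cylinders. The first step is to verify that $F$-stability forces $\lambda_1(V) > -\infty$: otherwise some domain $\Omega \ssubset \reg V$ satisfies $\lambda_1(\Omega) < -\tfrac{3}{2}$, and Proposition \ref{prop:unstable32} yields a compactly supported $F$-unstable variation, contradicting $F$-stability. Hence the regularity theorem Proposition \ref{prop:regularity} applies and gives $\sing V$ of Hausdorff dimension at most $n - 7$, so that $\mathcal{H}^{n-4}(\sing V) = \mathcal{H}^{n-1}(\sing V) = 0$ and the full strength of Section \ref{sec:stability} and Theorem \ref{thm:meanconvex} becomes available.

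Next I split on whether $V$ is a cone. If $V$ is a stationary cone, then $H \equiv 0$ on $\reg V$, and Corollary \ref{cor:ncandidate} gives $\lambda_1(V) \leq -\tfrac{1}{2}$ with equality iff $\supp V$ is a hyperplane; strict inequality would contradict $F$-stability via Proposition \ref{prop:unstablecone}, so $\supp V$ must be a hyperplane. If instead $V$ is not a cone, then Lemma \ref{lem:simon} forces $H \not\equiv 0$ on $\reg V$. Proposition \ref{prop:unstable} together with $F$-stability gives $\lambda_1(V) \geq -1$, while Corollary \ref{cor:Hcandidate} gives $\lambda_1(V) \leq -1$ with equality iff $H$ does not change sign. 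Hence $\lambda_1(V) = -1$ and, after choosing an orientation, $H \geq 0$ on $\reg V$. Theorem \ref{thm:meanconvex} then shows $\supp V = \mathbf{S}^k(\sqrt{2k}) \times \mathbf{R}^{n-k}$ for some $0 \leq k \leq n$, the cone alternative having been excluded.

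The final step, which I view as the main obstacle, is to eliminate the split cylinders with $k < n$. Since the cylinder is itself smooth, no further singular technology is required. The key observation is that on $\mathbf{S}^k(\sqrt{2k}) \times \mathbf{R}^{n-k}$ the stability operator $L$ carries extra eigenfunctions at the eigenvalue $\tfrac{1}{2}$, namely the axial coordinate functions $z_1, \ldots, z_{n-k}$, which are not of the form $\langle y, \nu\rangle$: translations along the axis yield $\langle y, \nu\rangle \equiv 0$, while translations transverse to the axis yield eigenfunctions depending only on the spherical factor. Plugging $f = z_i \phi_R$, with $\phi_R$ a radial cutoff in $\mathbf{R}^{n-k}$, into the second variation formula Proposition \ref{lem:2ndvarcrit}, an axial odd/even parity argument annihilates the cross terms $\int fH\rho$ and $\int f\langle y,\nu\rangle\rho$ for every translation $y$, leaving
\begin{equation*}
\partial_s^2|_{s=0} F_{x_s,t_s}(V_s) = -\tfrac{1}{2}\int f^2\rho - a^2\int H^2\rho - \tfrac{1}{2}\int\langle y,\nu\rangle^2\rho + o_R(1),
\end{equation*}
which is strictly negative for large $R$ and all dilations $a$ and translations $y$. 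Thus $V$ is $F$-unstable whenever $k < n$, forcing $k = n$ and $\supp V = \mathbf{S}^n(\sqrt{2n})$.
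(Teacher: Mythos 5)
Your proof is correct and follows essentially the same route as the paper: use Proposition \ref{prop:unstable32} to secure $\lambda_1(V) \geq -\tfrac{3}{2}$, invoke the regularity theory (Proposition \ref{prop:regularity}) to reduce the singular set to codimension $\geq 7$, then combine Corollary \ref{cor:ncandidate}, Corollary \ref{cor:Hcandidate}, Propositions \ref{prop:unstablecone} and \ref{prop:unstable}, and Theorem \ref{thm:meanconvex} to land on the hyperplane or a round cylinder. Your reorganisation of the case split (cone versus non-cone, via Lemma \ref{lem:simon}, rather than the paper's three-way split on the sign of $H$ via Harnack) is an equivalent presentation of the same logic. The one place you diverge is the final step eliminating the split cylinders $\mathbf{S}^k(\sqrt{2k}) \times \mathbf{R}^{n-k}$ with $1 \leq k < n$: the paper simply cites \cite[Theorem 0.16]{CMgeneric}, whereas you re-derive the instability from scratch using the axial coordinate functions $z_i$ (eigenfunctions of $L$ with eigenvalue $\tfrac{1}{2}$, which are $L^2(\rho)$-orthogonal to $H$ and to every $\langle y,\nu\rangle$ by parity in $z_i$). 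Your computation that $\int(-f L f)\rho \to -F(V) < 0$ as $R\to\infty$ for $f = z_i\phi_R$, and that the $a$- and $y$-terms contribute nonpositively with the cross terms vanishing exactly for each $R$, is correct (and it is worth noting that this uses $|A|^2 = \tfrac{1}{2}$, which requires $k \geq 1$, consistent with your having excluded the cone/hyperplane case). This re-derivation is a faithful reproduction of the Colding--Minicozzi mechanism rather than a genuinely new argument, so I would count your proof as the same approach, just more self-contained on the last step.
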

\begin{proof}
Set $\Sigma=\reg V$. By Proposition \ref{prop:unstable32}, we may assume that $\lambda_1(V)=\lambda_1(\Sigma)\geq -\frac{3}{2}$. As such, by the regularity theory Proposition \ref{prop:regularity} and Lemma \ref{lem:connectedness}, we may assume that $\sing V$ has codimension at least 7 and hence that $\Sigma$ is connected. Since $LH=H$, the Harnack inequality gives three cases for the sign of $H$:

Case 1: $H\equiv 0 $ on $\Sigma$. If $\supp V$ is not a hyperplane $\mathbf{R}^n$, then Corollary \ref{cor:ncandidate} gives that $\lambda_1(V) < -\frac{1}{2}$. But then Proposition \ref{prop:unstablecone} shows that $V$ is $F$-unstable.

Case 2: $H$ does not vanish on $\Sigma$. In this case by Theorem \ref{thm:meanconvex} we know that $\supp V$ must be a generalised cylinder $\mathbf{S}^k(\sqrt{2k})\times \mathbf{R}^{n-k}$, $k>0$. Colding-Minicozzi showed in \cite[Theorem 0.16]{CMgeneric} that of these only the $k=n$ case is $F$-stable. 

Case 3: $H$ changes sign on $\Sigma$. In this final case, Corollary \ref{cor:Hcandidate} gives that $\lambda_1(\Sigma)<-1$. Then Proposition \ref{prop:unstable} provides an $F$-unstable variation. 
\end{proof}

We also need to classify homogenously $F$-stable stationary cones:

\begin{theorem}
\label{thm:Fstablehomog}
Let $V=C(W)$ be an orientable stationary $n$-cone in $\mathbf{R}^{n+1}$, that satisfies the $\alpha$-structural hypothesis for some $\alpha\in(0,\frac{1}{2})$. If $V$ is homogenously $F$-stable, then $\supp V$ must be a hyperplane. 
\end{theorem}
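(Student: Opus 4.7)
The plan is to argue by contrapositive: assume $\supp V$ is not a hyperplane and construct a normal variation of the link $W$ that makes the homogenous second variation strictly negative for \emph{every} choice of centre variation $x_s$. Dimensions $n \leq 2$ are handled at once by Lemma \ref{lem:lowdimcones}, which already forces the hyperplane conclusion, so I would assume $n \geq 3$ from the outset.

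Because $V = C(W)$ is dilation-invariant, $W$ inherits both orientability and the $\alpha$-structural hypothesis from $V$. Since $\supp V$ is not a hyperplane, $W$ is not a totally geodesic $(n-1)$-sphere in $\mathbf{S}^n$, so the sharp stability estimate of Theorem \ref{thm:stabilityeigsphintro} gives $\kappa_1(W) \leq -2(n-1)$. Set $c_n := \Gamma(\tfrac{n+1}{2})/\Gamma(\tfrac{n}{2})$; Lemma \ref{lem:gamma} yields $2c_n^2 < 2(n-1)$. By domain exhaustion I can choose a smooth $\Omega \ssubset \reg W$ with $\kappa_1(\Omega) < -2c_n^2$, and take $\phi > 0$ to be the first Dirichlet eigenfunction on $\Omega$, so that $\wt{L}\phi = -\kappa_1(\Omega)\phi$ and $\phi$ is compactly supported away from $\sing W$.

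Consider the normal variation $W_s$ of $W$ generated by $X = \phi \wt{\nu}$. For an arbitrary variation $x_s$ of $x_0 = 0$ with $x'_0 = y \in \mathbf{R}^{n+1}$, the second variation formula at a critical point (Proposition \ref{lem:2ndvarcritcone}) expresses $\pr_s^2|_{s=0} F_{x_s,1}(C(W_s))$ as a positive multiple of
\begin{equation*}
\int_\Omega \left(-\phi \wt{L}\phi + 2c_n\, \phi\langle y,\wt{\nu}\rangle - \tfrac{1}{2}|y^\perp|^2\right)\d\mu_W.
\end{equation*}
Integration by parts turns the first term into $\kappa_1(\Omega)\int\phi^2\,\d\mu_W$, and the cross term is handled by the absorbing inequality $2c_n\phi\langle y,\wt{\nu}\rangle \leq 2c_n^2\phi^2 + \tfrac{1}{2}\langle y,\wt{\nu}\rangle^2$, noting that $\langle y,\wt{\nu}\rangle^2 = |y^\perp|^2$ pointwise on $\reg W$ so the $|y^\perp|^2$ contributions cancel exactly. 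The second variation is therefore at most $(\kappa_1(\Omega) + 2c_n^2)\int_\Omega \phi^2\,\d\mu_W < 0$, uniformly in $y$. Since this holds for every admissible variation $x_s$, $W$ is homogenously $F$-unstable, which contradicts the hypothesis.

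The essential content is packaged into two sharp inequalities that together make the cancellation work: the Simons-type eigenvalue bound $\kappa_1(W) \leq -2(n-1)$ provided by Theorem \ref{thm:stabilityeigsphintro} in the singular setting, and the gamma-function estimate $c_n^2 < n-1$ from Lemma \ref{lem:gamma}. These are matched precisely by the structure of the second variation, and neither can be weakened without losing the strict sign. The remainder is a short direct computation in the second variation formula, together with the reduction to $n \geq 3$ via Lemma \ref{lem:lowdimcones}; the main obstacle is really the availability of the sharp singular estimate of Theorem \ref{thm:stabilityeigsphintro}, which is why that result is the linchpin of the argument.
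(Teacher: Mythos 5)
Your proof is correct and follows essentially the same route as the paper: reduce to $n\geq 3$ via Lemma~\ref{lem:lowdimcones}, pick a first Dirichlet eigenfunction on a large domain of $\reg W$, apply the second variation formula Proposition~\ref{lem:2ndvarcritcone}, absorb the cross term by completing the square against $\tfrac12|y^\perp|^2$, and close with the combination of Theorem~\ref{thm:stabilityeigsphintro} and Lemma~\ref{lem:gamma} giving $\kappa_1(W)\leq -2(n-1)<-2c_n^2$. The paper additionally records an alternative computation done directly on the cone via Remark~\ref{rmk:2ndvarcritcone}, but the primary argument it gives is the one you wrote.
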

\begin{proof}
By Lemma \ref{lem:lowdimcones}, we may assume $n\geq 3$. Suppose that $W$ is not totally geodesic. We will show that $V=C(W)$ is homogenously $F$-unstable. Indeed, let $M=\reg W$ and consider a domain $\Omega \ssubset  M$. Let $u$ be a Dirichlet eigenfunction for the Jacobi operator $\wt{L}$ on $\Omega$, so that $\wt{L}u=-\kappa_1(\Omega)u$. We would like to use $u$ as our normal variation of $M$ in $\mathbf{S}^n$.

By the second variation formula for the $F$-functional on cones, Proposition \ref{lem:2ndvarcritcone}, it suffices to ensure that
\begin{equation}
\int_M \left( \kappa_1(\Omega) u^2 +2\frac{\Gamma(\frac{1+n}{2})}{\Gamma(\frac{n}{2})} u\langle y,\wt{\nu}\rangle - \frac{1}{2}\langle y,\wt{\nu}\rangle^2 \right) <0
\end{equation}
for any $y\in\mathbf{R}^{n+1}$. Completing the square we have that \begin{equation}-\frac{1}{2}\langle y,\wt{\nu}\rangle^2 + 2\frac{\Gamma(\frac{1+n}{2})}{\Gamma(\frac{n}{2})} u\langle y,\wt{\nu}\rangle \leq 2 \frac{\Gamma(\frac{1+n}{2})^2}{\Gamma(\frac{n}{2})^2} u^2 .\end{equation} 

But now $M$ is not totally geodesic and $n\geq 3$, so Theorem \ref{thm:stabilityeigsphintro} (see also \cite{zhu}) and Lemma \ref{lem:gamma} respectively give that \begin{equation}\kappa_1(M) \leq -2(n-1) < - 2 \frac{\Gamma(\frac{1+n}{2})^2}{\Gamma(\frac{n}{2})^2} .\end{equation} This  implies the existence of the desired domain $\Omega$ and thus concludes the proof.

Alternatively, having verified that the second variation formula Proposition \ref{lem:2ndvarcrit} is valid for homogenous variations (see Remark \ref{rmk:2ndvarcritcone}), we may use it directly. Setting $f(x) = |x| u(\frac{x}{|x|})$ and $\Sigma = \reg V$, as in the proof of \cite[Theorem 0.14]{CMgeneric} it suffices to ensure that 
\begin{equation}
\int_\Sigma\left( -fLf + f\langle y,\nu\rangle -\frac{1}{2}\langle y,\nu\rangle^2\right)\rho = \int_\Sigma \left( \kappa_1(\Omega) u^2 + |x| u\langle y,\nu\rangle - \frac{1}{2}\langle y,\nu\rangle^2\right)\rho <0.
\end{equation}
Estimating $2|x|u\langle y,\nu\rangle \leq \langle y,\nu\rangle^2+|x|^2u^2$, we may bound the left hand side from above by 
\begin{equation}
\label{eq:rmk1}
\int_\Sigma \left( \kappa_1(\Omega)u^2 + \frac{1}{2}|x|^2u^2\right)\rho = \int_\Sigma (\kappa_1(\Omega)+n)u^2\rho,
\end{equation}
where we have used the fact that $\int_0^\infty r^{n+1}\e^{-\frac{r^2}{4}}\d r = \frac{n}{2}\int_0^\infty r^{n-1}\e^{-\frac{r^2}{4}}\d r$. Again the fact that $\kappa_1(M) \leq -2(n-1) <-n$ completes the proof. 

\end{proof}

\begin{remark}
Similarly to Lemma \ref{lem:gamma}, using that $\lim_{n\rightarrow \infty} \frac{\Gamma(\frac{1+n}{2}) \sqrt{2}}{\Gamma(\frac{n}{2})n^{1/2}} = 1$ one may verify that $n-1 < 2 \frac{\Gamma(\frac{1+n}{2})^2}{\Gamma(\frac{n}{2})^2} < n$ for all $n$. %
The upper bound confirms that working on the link is slightly sharper than absorbing on the cone as in (\ref{eq:rmk1}). The lower bound ensures that the computation above (correctly) does not apply to the totally geodesic (planar) case. 
\end{remark}

\subsection{Entropy-stable self-shrinkers}

Finally we are ready to classify entropy-stable self-shrinkers.

\begin{theorem}
\label{thm:entropyclass}
Let $V$ be an orientable $F$-stationary $n$-varifold in $\mathbf{R}^{n+1}$ with finite entropy, that satisfies the $\alpha$-structural hypothesis for some $\alpha\in(0,\frac{1}{2})$. Assume that $V$ is not a cone. 

If $\supp V$ is not a generalised cylinder $\mathbf{S}^k(\sqrt{2k})\times\mathbf{R}^{n-k}$, then $V$ is entropy-unstable. Furthermore, if $V$ does not split off a line and if $\supp V$ is not the shrinking sphere $\mathbf{S}^n(\sqrt{2n})$, then the unstable variation can be taken to have compact support away from $\sing V$.  
\end{theorem}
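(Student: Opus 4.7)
I first address the case where $V$ does not split off a line, which simultaneously handles part~(2). If $V$ were $F$-stable, Theorem~\ref{thm:Fstable} would force $\supp V$ to be either the hyperplane $\mathbf{R}^n$ or the shrinking sphere $\mathbf{S}^n(\sqrt{2n})$; both are generalised cylinders (the $k=0$ and $k=n$ cases respectively), contradicting the hypothesis that $\supp V$ is not a generalised cylinder. Hence $V$ must be $F$-unstable, and Theorem~\ref{thm:entropyF} (whose hypotheses are met since $V$ does not split off a line and, by assumption, is not a cone) produces an entropy-unstable variation with compact support away from $\sing V$. In particular, the ``$\supp V$ not the shrinking sphere'' clause of part~(2) is implicit in ``$\supp V$ not a generalised cylinder.''

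The remaining task is part~(1) when $V$ splits off a line. I would iterate the splitting to write $V = \mathbf{R}^{n-k} \times \hat V$, where $\hat V$ is an orientable $F$-stationary $k$-varifold in $\mathbf{R}^{k+1}$ that does not split off a line. The $\alpha$-structural hypothesis, finite entropy (via a Fubini computation giving $\Lambda(V) = \Lambda(\hat V)$), and orientability all descend to $\hat V$ through the decompositions $\reg V = \mathbf{R}^{n-k}\times\reg\hat V$ and $\sing V = \mathbf{R}^{n-k}\times\sing\hat V$. I would then observe that $\supp \hat V$ cannot be a generalised cylinder $\mathbf{S}^j(\sqrt{2j})\times\mathbf{R}^{k-j}$: if $k-j\geq 1$ then $\hat V$ would itself split off a line (contradicting the construction), while if $k=j$ then $\supp V = \mathbf{R}^{n-k}\times \mathbf{S}^k(\sqrt{2k})$ would be a generalised cylinder (contradicting the hypothesis on $V$). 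I next split on whether $\hat V$ is a cone. If $\hat V = C(\hat W)$ is a cone, then $\hat W$ is not totally geodesic (else $\hat V$ would be a hyperplane), which by Lemma~\ref{lem:lowdimcones} forces $k \geq 3$; Theorem~\ref{thm:Fstablehomog} then yields that $\hat V$ is homogenously $F$-unstable, and Theorem~\ref{thm:entropyFcone} produces an entropy-unstable homogenous variation $\hat V_s$ of $\hat V$. If $\hat V$ is not a cone, the argument of the first paragraph applied directly to $\hat V$ (via Theorems~\ref{thm:Fstable} and \ref{thm:entropyF}) yields an entropy-unstable variation $\hat V_s$ of $\hat V$ with compact support away from $\sing\hat V$.

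In either subcase, set $V_s := \mathbf{R}^{n-k}\times \hat V_s$, generated by the trivial extension of the generator of $\hat V_s$ to all of $\mathbf{R}^{n+1}$. This extension vanishes on $\sing V$, is normal to $\reg V_s$, and inherits the required Lipschitz and $C^2$ regularity, so $V_s$ is a normal variation of $V$ in the sense of Definition~\ref{def:normalvar}. The Fubini-type identity $F_{x_0,t_0}(V_s) = F_{z_0,t_0}(\hat V_s)$ for $x_0 = (u_0,z_0) \in \mathbf{R}^{n-k}\times \mathbf{R}^{k+1}$ yields $\Lambda(V_s) = \Lambda(\hat V_s) < \Lambda(\hat V) = \Lambda(V)$, establishing that $V$ is entropy-unstable. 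The main technical obstacle is the bookkeeping required to verify that the iterated splitting preserves all structural and finiteness hypotheses, that the extended product variation qualifies as a normal variation, and that the Fubini computations for $F_{x_0,t_0}(V_s)$ and the descent of the $\alpha$-structural hypothesis to $\hat V$ are all legitimate; modulo this, the argument is a direct combinatorial application of the previously established Theorems~\ref{thm:Fstable}, \ref{thm:Fstablehomog}, \ref{thm:entropyF}, and \ref{thm:entropyFcone}, together with Lemma~\ref{lem:lowdimcones} to rule out nontrivial low-dimensional cones.
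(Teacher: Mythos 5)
Your proof is correct and takes essentially the same approach as the paper: handle the non-splitting case by combining Theorem~\ref{thm:Fstable} with Theorem~\ref{thm:entropyF}, and reduce the splitting case to it by the product decomposition $\mu_V = \mu_{\mathbf{R}^{n-k}}\times\mu_{\hat V}$ together with the Fubini identity $\Lambda(V)=\Lambda(\hat V)$ and the translation-invariant extension of the variation. One small redundancy: the branch in which $\hat V$ is a cone is vacuous, since $\hat V$ dilation-invariant would force $V = \mathbf{R}^{n-k}\times\hat V$ to be dilation-invariant as well, contradicting the standing hypothesis that $V$ is not a cone; the paper's proof tacitly exploits this and does not split on whether $\hat V$ is a cone.
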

\begin{proof}
First suppose that $V$ does not split off a line. If $V$ is $F$-stable then the classification of singular $F$-stable self-shrinkers Theorem \ref{thm:Fstable} gives that $\supp V$ must be a hyperplane $\mathbf{R}^n$ or the shrinking sphere $\mathbf{S}^n(\sqrt{2n})$. On the other hand, if $V$ is $F$-unstable then by Theorem \ref{thm:entropyF} it is entropy-unstable with respect to compactly supported variations. 

Now suppose that $\mu_V = \mu_{\mathbf{R}^{n-k}}\times \mu_{\wt{V}}$, where $\wt{V}$ is an orientable $F$-stationary $k$-varifold in $\mathbf{R}^{k+1}$ that does not split off a line. Then $\Lambda(V)=\Lambda(\wt{V})$. But by the above, if $\wt{V}$ is not spherical then it is entropy-unstable, and the induced (translation-invariant) variation of $V$ will also be entropy-unstable.
\end{proof}

\begin{theorem}
\label{thm:entropyclasscone}
Let $V=C(W)$ be an orientable stationary $n$-cone in $\mathbf{R}^{n+1}$, that satisfies the $\alpha$-structural hypothesis for some $\alpha\in(0,\frac{1}{2})$. If $\supp V$ is not a hyperplane $\mathbf{R}^n$, then $V$ is entropy-unstable under a homogenous variation induced by variation of the link $W$ away from its singular set.
\end{theorem}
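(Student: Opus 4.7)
The plan is to combine the classification of homogenously $F$-stable stationary cones (Theorem \ref{thm:Fstablehomog}) with the cone version of the entropy/$F$-instability correspondence (Theorem \ref{thm:entropyFcone}), handling low dimensions via Lemma \ref{lem:lowdimcones} and the splitting case by induction on $n$. First I would dispose of low dimensions: for $n\leq 2$, Lemma \ref{lem:lowdimcones} forces $\supp V$ to be a hyperplane, contradicting the hypothesis, so we may assume $n\geq 3$.

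In the case $V$ does not split off a line, the argument is formal. Since $V$ is not a hyperplane and $n\geq 3$, Theorem \ref{thm:Fstablehomog} gives that $V$ is homogenously $F$-unstable, so some normal variation $W_s$ of $W$ in $\mathbf{S}^n$ (compactly supported away from $\sing W$) achieves $\pr_s^2|_{s=0}F_{x_s,1}(C(W_s))<0$ for every variation $x_s$ of the centre $x_0=0$. Theorem \ref{thm:entropyFcone} then upgrades this to entropy-instability under the induced homogenous variation $V_s=C(W_s)$, which by construction is a variation of the link $W$ away from its singular set.

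For the splitting case I would induct on $n$. After a rotation, invariance of $V$ under translation in some direction $e$ lets us write $\mu_V=\mu_{\mathbf{R} e}\times\mu_{V'}$ for an orientable stationary $(n-1)$-cone $V'=C(W')$ in $e^\perp\cong\mathbf{R}^n$, which inherits the $\alpha$-structural hypothesis; note that $\supp V$ is a hyperplane iff $\supp V'$ is. For $n=3$, $V'$ is a $2$-cone, so Lemma \ref{lem:lowdimcones} forces it to be a hyperplane, contradicting the hypothesis on $V$; this is the base case. For $n\geq 4$, the inductive hypothesis applied to $V'$ (in ambient dimension $n$) produces an entropy-unstable homogenous variation $V'_s$ of $V'$ coming from a variation $W'_s$ of $W'$ away from $\sing W'$. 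The link $W$ is the spherical suspension $\{\cos\theta\, e+\sin\theta\, q:\theta\in[0,\pi],q\in W'\}$, and since $V'$ is nontrivial the poles $\pm e$ lie in $\sing W$. Lifting $W'_s$ via the suspension construction then gives a variation $W_s$ of $W$ supported away from $\sing W$, producing the product cone $V_s=\mathbf{R} e\times V'_s$. Factoring the Gaussian integral on $x_s=r_s e+x'_s$, using that $F_{r_s e,1}(\mathbf{R} e)=1$, yields $F_{x_s,1}(V_s)=F_{x'_s,1}(V'_s)$, and taking suprema gives $\Lambda(V_s)=\Lambda(V'_s)<\Lambda(V')=\Lambda(V)$, as required.

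The main obstacle is really the splitting case, specifically the compatibility of variations under the suspension construction: one has to verify the spherical suspension structure of the link, check that the poles lie in $\sing W$ so the lifted variation stays away from the singular set, and establish the entropy factorization $\Lambda(V_s)=\Lambda(V'_s)$. The non-splitting case, by contrast, follows essentially formally from combining Theorems \ref{thm:Fstablehomog} and \ref{thm:entropyFcone} once low dimensions have been dispatched.
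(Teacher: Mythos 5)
Your non-splitting case matches the paper's proof exactly: dispose of $n\le 2$ via Lemma \ref{lem:lowdimcones}, then combine Theorem \ref{thm:Fstablehomog} with Theorem \ref{thm:entropyFcone}. The paper's proof is in fact only this two-sentence argument and does not explicitly verify the ``does not split off a line'' hypothesis of Theorem \ref{thm:entropyFcone}, so you have correctly flagged a real issue: a stationary cone satisfying the $\alpha$-structural hypothesis \emph{can} split off a line (e.g.\ $\mathbf{R}\times C(\mathbf{S}^3\times\mathbf{S}^3)$ in $\mathbf{R}^9$), and in that case Theorem \ref{thm:entropyFcone} does not directly apply. Indeed the issue is not merely formal: if $V=\mathbf{R}e\times V'$ and $W_s$ is a variation of the link compactly supported in $\reg W$ (hence away from the poles $\pm e$), then $V_s=C(W_s)$ agrees with $V$ on a conical neighbourhood of the $e$-axis, and one computes $F_{te,1}(V_s)\to F_{te,1}(V)=\Lambda(V)$ as $|t|\to\infty$, so $\Lambda(V_s)\ge\Lambda(V)$ and the variation does \emph{not} decrease entropy. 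Your inductive suspension construction is therefore the right move and produces a genuinely different kind of variation that does work.

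However, one claim in your splitting step is imprecise and should be corrected. You assert that lifting $W'_s$ via the suspension ``gives a variation $W_s$ of $W$ supported away from $\sing W$.'' This is not so: the suspension vector field $X(\cos\theta\,e+\sin\theta\,q)=\sin\theta\,X'(q)$ is nonzero arbitrarily close to the poles $\pm e\in\sing W$ (its support contains the poles in its closure). What is true is that $X$ \emph{vanishes on} $\sing W$, is Lipschitz, and is $C^2$ on $\mathbf{S}^n\setminus\sing W$, so it generates a valid normal variation in the sense of Definition \ref{def:normalvar} (this is exactly what is needed for homogenous variations of cones, by the discussion after that definition). As the computation above shows, a variation with compact support in $\reg W$ could never work here, so the conclusion of the theorem for splitting cones must be understood in this weaker sense. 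With that correction your argument is complete: $C(W_s)=\mathbf{R}e\times C(W'_s)$, the Gaussian integral factors to give $\Lambda(C(W_s))=\Lambda(V'_s)$, and the inductive hypothesis yields $\Lambda(V_s)<\Lambda(V)$.
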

\begin{proof}
By Lemma \ref{lem:lowdimcones}, we may assume $n\geq 3$. If $V$ is homogenously $F$-stable, then by Theorem \ref{thm:Fstablehomog}, $\supp V$ must then be a hyperplane $\mathbf{R}^n$. On the other hand, if $V$ is homogenously $F$-unstable, then by Theorem \ref{thm:entropyFcone} it is entropy-unstable under the corresponding homogenous variation. 
\end{proof}

\begin{remark}
It may be useful contextually to recall that any dilation-invariant or translation-invariant self-shrinker is entropy-stable amongst compactly supported variations, since we may shift the Gaussian centre away from the variation. Therefore, the natural variations to consider, as we have above, are those with the same symmetries as the original self-shrinker. 

One may note in particular that even the area-minimising non-flat cones are entropy-unstable when we allow the class of homogenous variations. On the one hand this makes sense since the area-minimising condition is only with respect to local perturbations, and there are certainly area-decreasing perturbations if again one allows homogenous variations. On the other hand, this suggests that the entropy functional may be limited in its ability to detect the dynamical stability of stationary cones under the mean curvature flow.
\end{remark}

Finally, Theorem \ref{thm:entropyclassintro} is simply the combination of Theorems \ref{thm:entropyclass} and \ref{thm:entropyclasscone}. We also observe:

\begin{remark}
\label{rmk:multiplicity}
In Definition \ref{def:normalvar} we considered deformations by certain ambient vector fields; in particular for higher multiplicity varifolds this did not allow the sheets to come apart. It is easy to verify that two distinct parallel planes together have entropy strictly less than 2, and similarly two distinct concentric spheres together have entropy strictly less than twice that of a single sphere. Thus, if the sheets are allowed to separate, it follows that higher multiplicity cylinders are also entropy-unstable in that sense. 
\end{remark}

\bibliographystyle{plain}
\bibliography{shrinkerstabilitymainv12}
\end{document}